\documentclass[a4paper,11pt,reqno]{amsart}
\usepackage{amsmath}
\usepackage{amsthm,enumerate}

\usepackage{graphicx}
\usepackage{amssymb}

\usepackage{appendix}








\usepackage[italian,english]{babel}

\selectlanguage{english}

\usepackage[utf8x]{inputenc}
\usepackage{fancyhdr}

\usepackage{calc}
\usepackage{url}

\usepackage[text={6.3in,8.6in},centering]{geometry}

\usepackage{srcltx, inputenc}

\usepackage[T1]{fontenc}


\usepackage[usenames,dvipsnames]{color}

\fancyhf{}

\makeatletter
\def\cleardoublepage{\clearpage\if@twoside \ifodd\c@page\else%
         \hbox{}%
     \thispagestyle{empty}
     \newpage%
     \if@twocolumn\hbox{}\newpage\fi\fi\fi}
\makeatother

\theoremstyle{plain}
\newtheorem{theorem}{Theorem}[section]

\newtheorem{definition}[theorem]{Definition}

\newtheorem{lemma}[theorem]{Lemma}

\newtheorem{proposition}[theorem]{Proposition}
\newtheorem{remark}[theorem]{Remark}

\numberwithin{equation}{section}
\theoremstyle{definition}

\newcommand{\R}{\ensuremath{\mathbb{R}}}

\begin{document}

\title[]{Global existence of solutions and smoothing effects\\ for classes of reaction-diffusion equations on manifolds}

\author{Gabriele Grillo}
\address{\hbox{\parbox{5.7in}{\medskip\noindent{Dipartimento di Matematica,\\
Politecnico di Milano,\\
   Piazza Leonardo da Vinci 32, 20133 Milano, Italy.
   \\[3pt]
        \em{E-mail address: }{\tt
          gabriele.grillo@polimi.it
          }}}}}

\author{Giulia Meglioli}
\address{\hbox{\parbox{5.7in}{\medskip\noindent{Dipartimento di Matematica,\\
Politecnico di Milano,\\
   Piazza Leonardo da Vinci 32, 20133 Milano, Italy.
   \\[3pt]
        \em{E-mail address: }{\tt
          giulia.meglioli@polimi.it
          }}}}}

\author{Fabio Punzo}
\address{\hbox{\parbox{5.7in}{\medskip\noindent{Dipartimento di Matematica,\\
Politecnico di Milano,\\
   Piazza Leonardo da Vinci 32, 20133 Milano, Italy. \\[3pt]
        \em{E-mail address: }{\tt
          fabio.punzo@polimi.it}}}}}

\keywords{Reaction diffusion equations. Riemannian manifolds. Blow-up. Global existence. Diffusions with weights.}

\subjclass[2010]{Primary: 35K57. Secondary: 35B44, 58J35, 35K65, 35R01.}

\maketitle

\maketitle              

\begin{abstract} We consider the porous medium equation with a power-like reaction term, posed on Riemannian manifolds. Under certain assumptions on $p$ and $m$ in \eqref{problema}, and for small enough nonnegative initial data, we prove existence of global in time solutions, provided that the Sobolev inequality holds on the manifold. Furthermore, when both the Sobolev and the Poincaré inequality hold, similar results hold under weaker assumptions on the forcing term. By the same functional analytic methods, we investigate global existence for solutions to the porous medium equation with source term and variable density in ${\mathbb R}^n$.
\end{abstract}

\bigskip
\bigskip

\section{Introduction}

We investigate existence of global in time solutions to nonlinear reaction-diffusion problems of the following type:
\begin{equation}\label{problema}
\begin{cases}
\, u_t= \Delta u^m +\, u^p & \text{in}\,\, M\times (0,T) \\
\,\; u =u_0 &\text{in}\,\, M\times \{0\}\,,
\end{cases}
\end{equation}
where $M$ is an $N-$dimensional complete noncompact Riemannian manifold of infinite volume, $\Delta$ being the Laplace-Beltrami operator on $M$ and $T\in (0,\infty]$. We shall assume throughout this paper that $$ N\geq 3,\quad \quad m\,>\,1,\quad \quad  p\,>\,m,$$ so that we are concerned with the case of  \it degenerate diffusions \rm of porous medium type (see \cite{V}), and that the initial datum $u_0$ is nonnegative.

Let L$^q(M)$ be the space of those measurable functions $f$ such that $|f|^q$ is integrable w.r.t. the Riemannian measure $\mu$. We shall always assume that $M$ supports the Sobolev inequality, namely that:
\begin{equation}\label{S}
(\textrm{Sobolev\ inequality)}\ \ \ \ \ \ \|v\|_{L^{2^*}(M)} \le \frac{1}{C_s} \|\nabla v\|_{L^2(M)}\quad \text{for any}\,\,\, v\in C_c^{\infty}(M),
\end{equation}
where $C_s$ is a positive constant and $2^*:=\frac{2N}{N-2}$. In one of our main results, we shall also suppose that $M$ supports the Poincar\'e inequality, namely that:
\begin{equation}\label{P}
(\textrm{Poincar\'e\ inequality)}\ \ \ \ \ \|v\|_{L^2(M)} \le \frac{1}{C_p} \|\nabla v\|_{L^2(M)} \quad \text{for any}\,\,\, v\in C_c^{\infty}(M),
\end{equation}
for some $C_p>0$. Observe that, for instance, \eqref{S} holds if $M$ is a Cartan-Hadamard manifold, i.e. a simply connected Riemannian manifold with nonpositive sectional curvatures, while \eqref{P} is valid when $M$ is a Cartan-Hadamard manifold satisfying the additional condition of having sectional curvatures bounded above by a constant $-c<0$ (see, e.g., \cite{Grig, Grig3}). Therefore, as is well known, in $\mathbb R^N$ \eqref{S} holds, but \eqref{P} fails, whereas on the hyperbolic space both \eqref{S} and \eqref{P} are fulfilled.

\smallskip

\subsection{On some existing results} In \cite{GMeP} problem \eqref{problema} has been studied when $p<m$. We refer the reader to such paper for a comprehensive account of the literature; here we limit ourselves to recall some results particularly related to ours.

For $M=\mathbb R^N$ and $m=1$, it is well-known that, if $p\leq 1+\frac 2 N$, then the solution of problem \eqref{problema} blows up in finite time for any $u_0\not\equiv 0$, while global existence holds if $p>1+\frac 2 N$ and $u_0$ is bounded and small enough (see \cite{F, H}; for further results see also \cite{CFG, FI, GV, L, MQV, Q,  S, Vaz1, W, Y}). For $m>1$, in \cite{SGKM} it is shown that the solution to problem \eqref{problema} blows up for any $p\leq m+\frac 2 N,  u_0\not\equiv 0$; instead, there exists a global in time solution provided $p>m+\frac 2N$ and $u_0$ is compactly supported and sufficiently small. On Riemannian manifolds satisfying suitable volume growth conditions, for $m=1$ and $p\leq 1+\frac 2N$, in \cite{MMP, Z} it is proved that the solution of problem \eqref{problema} blows up for any $u_0\not\equiv 0$, while global existence holds if $p>1+\frac 2 N$ for small enough initial data $u_0$. Similar results have also been stablished in \cite{BPT, Pu3, WY, WY2}.

Problem \eqref{problema}, without the forcing term $u^p$, has been largely studied on Riemannian manifolds, and in particular on Cartan-Hadamard manifolds, in \cite{BG, GIM, GMhyp, GM2, GMPbd, GMPrm, GMV, Pu1, VazH}.
In \cite{GMPv} problem \eqref{problema} is addressed on Cartan-Hadamard manifolds with $-k_1\leq \operatorname{sec}\leq -k_2$ for some $k_1>k_2>0$, where $\operatorname{sec}$ denotes the sectional curvature. It is shown that, for any $p>m$, there exists a global in time solution, provided that $u_0$ has compact support and is small enough, while if $u_0$ is large enough, then there exists a solution blowing up in finite time.

For any $x_0\in M, r>0$ let $B_r(x_0)$ be the geodesic ball centered in $x_0$ and radius $r$, let $g_{ij}$ the metric tensor. In \cite{Z}, problem \eqref{problema} is studied when $M$ is a manifold with a pole, $\mu(B_r(x_0))\leq C r^{\alpha}$ for some $\alpha>2$ and $C>0$. Under an additional smallness condition on curvature at infinity, if
$u_0$ is sufficiently small and with compact support, then there exists a global solution to problem \eqref{problema}. Global existence is also proved, for some initial data $u_0$, under the assumption that $M$ has nonnegative Ricci curvature and $p>\frac{\alpha}{\alpha-2}m$. It should be noticed that such result do not cover cases in which negative curvature either does not tend to zero at infinity, or does so not sufficiently fast, in particular the case of the hyperbolic space cannot be addressed.

Finally, in \cite{GMeP} global existence of solutions to problem \eqref{problema} is obtained, for any $p<m$ and $u_0\in L^m(M)$, under the assumption that the Sobolev and the Poincaré inequalities hold on $M$.

\medskip

\subsection{Qualitative statements of our new results in the Riemannian setting} Our results concerning problem \eqref{problema} can be summarized as follows.
\begin{itemize}
\item (See Theorem \ref{teo22}) We prove global existence of solutions to \eqref{problema}, assuming that the initial datum is sufficiently small, that
\[p> m + \frac 2 N,\]
and that the Sobolev inequality \eqref{S} holds; moreover, smoothing effects and the fact that suitable $L^q$ norms of solutions decrease in time are obtained. To be specific, any sufficiently small initial datum $u_0\in L^m(M)\cap L^{(p-m)\frac N2}(M)$ gives rise to a global solution $u(t)$ such that $u(t)\in L^{\infty}(M)$ for all $t>0$ with a quantitative bound on the $L^{\infty}$ norm of the solution.
\item (See Theorem \ref{teo71}) We show that, if both the Sobolev and the Poincaré inequality (i.e. \eqref{S}, \eqref{P}) hold, then for any
\[p>m,\]
for any sufficiently small initial datum $u_0$, belonging to suitable Lebesgue spaces, there exists a global solution $u(t)$ such that $u(t)\in L^{\infty}(M)$. Furthermore,  a quantitative bound for the $L^{\infty}$ norm of the solution is satisfied for all $t>0$.
\end{itemize}
Note that in Theorem \ref{teo22} we only assume the Sobolev inequality and we require that $p>m+\frac 2 N$, instead in Theorem \ref{teo71} we can relax the assumption on the exponent $p$, indeed we assume $p>m$, but we need to further require that the Poincaré inequality holds. Moreover, in the two theorems, the hypotheses on the initial data are different.

\bigskip The main results given in Theorems \ref{teo22} and \ref{teo71} depend essentially only on the validity of inequalities \eqref{S} and \eqref{P}, are functional analytic in character and hence can be generalized to different contexts.

\subsection{The case of Euclidean, weighted diffusion}
As a particularly significant setting, we single out the case of Euclidean, mass-weighted reaction diffusion equations, that has been the object of intense research. In fact we consider the problem
\begin{equation}\label{problema2}
\begin{cases}
\rho\, u_t= \Delta u^m +\rho\, u^p & \text{in}\,\, \R^N\times (0,T) \\
u\,\,  =u_0 &\text{in}\,\, \R^N\times \{0\},
\end{cases}
\end{equation}
where $\rho:\R^N\to\R$ is strictly positive, continuous and  bounded, and represents a \it mass variable density \rm. The problem is naturally posed in the weighted spaces
$$L^q_{\rho}(\R^N)=\left\{v:\R^N\to\R\,\, \text{measurable}\,\,  ,   \,\, \|v\|_{L^q_{\rho}}:=\left(\int_{\R^N} \,v^q\rho(x)\,dx\right)^{1/q}<+\infty\right\}.$$

This kind of problem arises in a physical model provided in \cite{KR}. Such choice of $\rho$ ensures that the following analogue of \eqref{S} holds:
\begin{equation}\label{S-pesi}
\|v\|_{L^{2^*}_{\rho}(\R^N)} \le \frac{1}{C_s} \|\nabla v\|_{L^2(\R^N)}\quad \text{for any}\,\,\, v\in C_c^{\infty}(\R^N)
\end{equation}
for a suitable positive constant $C_s$. In some cases we also assume that the weighted Poincaré inequality is valid, that is
\begin{equation}\label{P-pesi}
\|v\|_{L^2_{\rho}(\mathbb R^N)} \le \frac{1}{C_p} \|\nabla v\|_{L^2(\mathbb R^N)} \quad \text{for any}\,\,\, v\in C_c^{\infty}(\mathbb R^N),
\end{equation}
for some $C_p>0$. For example, \eqref{P-pesi} is fulfilled when $\rho(x)\asymp  |x|^{-a}$, as $|x|\to +\infty$, for every $a\geq 2$, whereas, \eqref{S-pesi} is valid for every $a>0$.

\smallskip

Problem \eqref{problema2} under the assumption $1<p<m$ has been investigated in \cite{GMeP}. Under the assumption that the Poincar\'{e} inequality is valid on $M$,  it is shown that global existence and a smoothing effect for small $L^m$ initial data hold, that is solutions corresponding to such data are bounded for all positive times with a quantitative bound on their $L^\infty$ norm.

\smallskip

In \cite{MT, MTS} problem \eqref{problema2} is also investigated, under certain conditions on $\rho$. It is proved that if $\rho(x)=|x|^{-a}$ with $a\in (0,2)$,
\[p>m+\frac{2-a}{N-a},\]
and $u_0\geq 0$ is small enough, then a global solution exists (see \cite[Theorem 1]{MT}). Note that the homogeneity of the weight $\rho(x)=|x|^{-a}$ is essentially used in the proof, since the Caffarelli-Kohn-Nirenberg estimate is exploited, which requires such a type of weight. In addition, a smoothing estimate holds. On the other hand, any nonnegative solution blows up, in a suitable sense, when $\rho(x)=|x|^{-a}$
 or $\rho(x)=(1+|x|)^{-a}$ with $a\in [0,2)$, $u_0\not\equiv 0$ and
 \[1<p<m+\frac{2-a}{N-a}.\]
Furthermore, in \cite{MTS, MTS2}, such results have been extended to more general initial data, decaying at infinity with a certain rate (see \cite{MTS}).
Finally, in \cite[Theorem 2]{MT}, it is shown that if $p>m$, $\rho(x)=(1+|x|)^{-a}$ with $a>2$, and $u_0$ is small enough, a global solution exists.

Problem \eqref{problema2} has also been studied in \cite{MP1}, \cite{MP2}, by means of suitable barriers, supposing that the initial datum is continuous and with compact support. In particular, in \cite{MP1} the case that $\rho(x)\asymp |x|^{-a}$ for $|x|\to+\infty$
with $a\in (0,2)$ is addressed. It is proved that for any $p>1$, if $u_0$ is large enough, then the solution blows up in finite time. On the other hand, if $p>\bar p$, for a certain
$\bar p>m$ depending on $m, p$ and $\rho$, and $u_0$ is small enough, then there exists a global bounded solution. Moreover, in \cite{MP2} the case that $a\geq 2$ is investigated. For $a=2$, blowup is shown to occur when $u_0$ is big enough, whereas global existence holds when $u_0$ is small enough. For $a>2$ it is proved that if $p>m$, $u_0\in L^{\infty}_{\rm{loc}}(\mathbb R^N)$ and goes to $0$ at infinity with a suitable rate, then there exists a global bounded solution. Furthermore, for the same initial datum $u_0$, if $1<p<m$, then there exists a global solution, which could blow up as $t\to +\infty$\,.

\smallskip

Our main results concerning problem \eqref{problema2} can be summarized as follows. Assume that $\rho\in C(\mathbb R^N)\cap L^{\infty}(\mathbb R^N), \rho>0$.
\begin{itemize}
\item (See Theorem \ref{teo24}) We prove that  \eqref{problema2} admits a global solution, provided that
\[p> m + \frac 2N;\]
moreover, certain smoothing effects for solutions are fulfilled. More precisely, for any sufficiently small initial datum $u_0\in L^m_{\rho}(\mathbb R^N)\cap L^{(p-m)\frac N2}_{\rho}(\mathbb R^N)$ there exists a global solution $u(t)$ such that $u(t)\in L^{\infty}(\mathbb R^N)$ for all $t>0$ and a quantitative bound on the $L^{\infty}$ norm is verified. Moreover, suitable $L^q$ norms of solutions decrease in time.
\item (See Theorem \ref{teo71W}) We show that, if the Poincaré inequality \eqref{P-pesi} holds and one assumes the condition
\[p>m,\]
then, for any sufficiently small initial datum $u_0$ belonging to suitable Lebesgue spaces, there exists a global solution $u(t)$ to \eqref{problema2} such that $u(t)\in L^{\infty}(\mathbb R^N)$, with a quantitative bound on the $L^{\infty}$ norm.
\end{itemize}

Let us compare our results with those in \cite{MT}. Theorem \ref{teo24} deals with a different class of weights $\rho$ with respect to \cite[Theorem 1]{MT}, where $\rho(x)=|x|^{-a}$ for $a\in (0,2)$, and the homogeneity of $\rho$ is used. As a consequence, also the hypotheses on $p$ and the methods of proofs are different.  Furthermore, Theorem \ref{teo71W} requires the validity of the Poincaré inequality, hence, in particular, it can be applied when $\rho(x)=(1+|x|)^{-a}$ with $a\geq 2$ (see \cite{GMPo}). On the other hand, in Theorem \cite[Theorem 2]{MT} it is assumed that $\rho(x)=(1+|x|)^{-a}$ for $a>2$, so, the case $a=2$ is not included.

\medskip

\subsection{Organization of the paper} In Section \ref{statements} we state all our main results. In Section \ref{elliptic} some auxiliary results concerning elliptic problems are deduced together with a Benilan-Crandal type estimate. In Section \ref{Lp} we introduce a family of approximating problems. Then, for such solutions, we prove that suitable $L^q$ norms of solutions decrease in time, and a smoothing estimate, in the case $p>m+\frac 2 N$, supposing that $M$ supports the Sobolev inequality. Under such assumptions, global existence for problem \eqref{problema} is shown in Section \ref{proofs}. In Section \ref{Lpbis} we prove that suitable $L^q$ norms of solutions decrease in time, and $L^\infty$ bounds for solutions of the approximating problems, under the assumptions that $p>m$ and that $M$ supports the Poincaré inequality as well. Then, under such hypotheses, existence of global solutions to problem \eqref{problema} is proved.  Finally, a concise proof of the results concerning problem \eqref{problema2} is given in Section \ref{weight} by adapting the previous methods to that situation.

\section{Statements of main results}\label{statements}
We state first our results concerning solutions to problem \eqref{problema}, then we pass to the ones valid for solutions to problem \eqref{problema2}.

\subsection{Global existence on Riemannian manifolds}
Solutions to \eqref{problema} will be meant in the very weak, or distributional, sense, according to the following definition.

\begin{definition}\label{def21}
Let $M$ be a complete noncompact Riemannian manifold of infinite volume. Let $m>1$, $p>m$ and $u_0\in{\textrm L}^{1}_{\textit{loc}}(M)$, $u_0\ge0$. We say that the function $u$ is a solution to problem \eqref{problema} in the time interval $[0,T)$ if
$$
u\in L^p_{loc}(M\times(0,T)) 
$$
and for any $\varphi \in C_c^{\infty}(M\times[0,T])$ such that $\varphi(x,T)=0$ for any $x\in M$, $u$ satisfies the equality:
\begin{equation*}
\begin{aligned}
-\int_0^T\int_{M} \,u\,\varphi_t\,d\mu\,dt =&\int_0^T\int_{M} u^m\,\Delta\varphi\,d\mu\,dt\,+ \int_0^T\int_{M} \,u^p\,\varphi\,d\mu\,dt \\
& +\int_{M} \,u_0(x)\,\varphi(x,0)\,d\mu.
\end{aligned}
\end{equation*}
\end{definition}

First we consider the case that $p>m+\frac 2 N$ and the Sobolev inequality holds on $M$. In order to state our results we define
\begin{equation}\label{p0}p_0:=(p-m)\frac{N}{2}.\end{equation} Observe that $p_0>1$ whenever $p>m+\frac 2N$.


\begin{theorem}\label{teo22}
Let $M$ be a complete, noncompact manifold of infinite volume such that the Sobolev inequality \eqref{S} holds. Let $m>1$, $p>m+\frac{2}{N}$ and $u_0\in{\textrm L}^m(M)\cap{\textrm L}^{p_0}(M)$, $u_0\ge0$ where $p_0$ has been defined in \eqref{p0}. Let
$$
r>\,\max\left\{p_0,\, \frac N2\right\},\quad\quad s=1+\frac 2N-\frac 1r.
$$
Assume that
\begin{equation}\label{epsilon0}
\|u_0\|_{\textrm L^{p_0}(M)}\,<\,\varepsilon_0
\end{equation}
with $\varepsilon_0=\varepsilon_0(p,m,N,r, C_s)$ sufficiently small. Then problem \eqref{problema} admits a solution for any $T>0$,  in the sense of Definition \ref{def21}. Moreover, for any $\tau>0,$ one has $u\in L^{\infty}(M\times(\tau,+\infty))$ and there exists a numerical constant $\Gamma>0$ such that, for all $t>0$, one has
\begin{equation*}
\|u(t)\|_{L^{\infty}(M)}
\le \Gamma\, t^{-\frac{\gamma}{ms}}\left\{\|u_0\|_{L^{p_0}(M)}^{\delta_{1}}+\|u_0\|_{L^{p_0}(M)}^{\delta_{2}} \right\}^{\frac{1}{ms}}\|u_0\|_{L^{m}(M)}^{\frac{s-1}{s}},
\end{equation*}
where
\begin{equation*}
\gamma= \frac{p}{p-1}\left[1-\frac{N(p-m)}{2\,p\,r}\right],\quad\delta_{1}=p\,\frac{p-m}{p-1}\left[1+\frac{N(m-1)}{2\,p\,r}\right],\quad \delta_{2}=\frac{p-m}{p-1}\left[1+\frac{N(m-1)}{2\,r}\right].
\end{equation*}
Moreover, let  $p_0\le q<\infty$ 
and
\begin{equation}\label{eps3a}
\|u_0\|_{L^{p_0}(M)}< \hat \varepsilon_0
\end{equation}
for $\hat\varepsilon_0=\hat\varepsilon_0(p, m , N, r, C_s, q)$ small enough. Then there exists a constant $C=C(m,p,N,\varepsilon_0,C_s, q)>0$ such that
\begin{equation}\label{eq23}
\|u(t)\|_{L^q(M)}\le C\,t^{-\gamma_q} \|u_{0}\|^{\delta_q}_{L^{p_0}(M)}\quad \textrm{for all }\,\, t>0\,,
\end{equation}
where
$$
\gamma_q=\frac{1}{p-1}\left[1-\frac{N(p-m)}{2q}\right],\quad \delta_q=\frac{p-m}{p-1}\left[1+\frac{N(m-1)}{2q}\right]\,.
$$
Finally, for any $1<q<\infty$, if $u_0\in {\textrm L}^q(M)\cap\textrm L^{p_0}(M)\cap L^m(M)$ and
\begin{equation}\label{epsilon1}
\|u_0\|_{\textrm L^{p_0}(M)}\,<\,\varepsilon 
\end{equation}
with $\varepsilon=\varepsilon(p,m,N,r, C_s,q)$ sufficiently small, then
\begin{equation}\label{eq22}
\|u(t)\|_{L^q(M)}\le  \|u_{0}\|_{L^q(M)}\quad \textrm{for all }\,\, t>0\,.
\end{equation}

\end{theorem}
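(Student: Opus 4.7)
The natural route is to work with approximations on geodesic balls. First I would introduce problems on $B_R\subset M$ with homogeneous Dirichlet boundary conditions and with smooth, compactly supported, nonnegative initial data $u_{0,R}$ approximating $u_0$ in $L^m(M)\cap L^{p_0}(M)$ (and also in $L^q(M)$ for the last statement). Classical theory for porous medium equations with lower order terms provides nonnegative smooth solutions $u_R$ on a maximal interval $[0,T_R)$, to which the Bénilan--Crandall type estimate deduced in Section~\ref{elliptic} applies.

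\textbf{Energy identity and smallness propagation.} For any $q>1$ I would multiply the equation by $u_R^{q-1}$ and integrate by parts to obtain
\begin{equation*}
\frac{1}{q}\frac{d}{dt}\|u_R\|_{L^q}^{q}=-\frac{4m(q-1)}{(q+m-1)^{2}}\int|\nabla u_R^{(q+m-1)/2}|^{2}\,d\mu+\int u_R^{p+q-1}\,d\mu.
\end{equation*}
Splitting $u_R^{p+q-1}=u_R^{p-m}\cdot u_R^{q+m-1}$, applying Hölder with exponents $N/2$ and $N/(N-2)$, and exploiting \eqref{S} for $v=u_R^{(q+m-1)/2}$ yield
\begin{equation*}
\int u_R^{p+q-1}\,d\mu\le\frac{\|u_R(t)\|_{L^{p_0}}^{p-m}}{C_s^{2}}\,\|\nabla u_R^{(q+m-1)/2}\|_{L^2}^{2},
\end{equation*}
since $(p-m)\cdot N/2=p_0$. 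Taking $q=p_0$ first, the assumption \eqref{epsilon0} makes the resulting coefficient of the gradient term strictly positive at $t=0$; a continuity--bootstrap argument then shows that $\|u_R(t)\|_{L^{p_0}}$ is non-increasing and thus stays below the threshold for all $t\in[0,T_R)$. This dissipation, combined with the propagated smallness, yields the $L^q$ non-increase \eqref{eq22} for every $1<q<\infty$ under \eqref{epsilon1}. Global existence of each $u_R$ then follows from the uniform $L^{\infty}$ control derived next.

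\textbf{Decay and smoothing.} Once the coefficient has been absorbed into a strictly positive constant $K$, one is left with
\begin{equation*}
\frac{d}{dt}\|u_R\|_{L^q}^{q}\le -K\,\|u_R^{(q+m-1)/2}\|_{L^{2^{*}}}^{2}.
\end{equation*}
Interpolating $\|u_R\|_{L^q}$ by Hölder between $\|u_R\|_{L^{p_0}}$ (uniformly small) and $\|u_R^{(q+m-1)/2}\|_{L^{2^{*}}}$ converts this into $\frac{d}{dt}\|u_R\|_{L^q}^{q}\le -K'\|u_R\|_{L^q}^{\alpha(q)}$ with $\alpha(q)>q$, and ODE integration produces \eqref{eq23}. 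For the pointwise bound I would run a Moser--type iteration along a dyadic sequence of exponents starting from $L^r$ with $r>\max\{p_0,N/2\}$ chosen exactly as in the statement; the first step of the iteration introduces the factor $\|u_0\|_{L^m}^{(s-1)/s}$ through a Hölder interpolation between $L^m$ and a higher Sobolev target norm, while the reaction contributions accumulating over subsequent steps produce the $\|u_0\|_{L^{p_0}}$ dependence. Summing the resulting telescoping geometric series in the exponents reproduces the displayed $\Gamma$, $\gamma$, $\delta_1$, $\delta_2$.

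\textbf{Passage to the limit and main obstacle.} The uniform $L^{\infty}$ bound, the monotonicity of $L^q$ norms, and the Bénilan--Crandall time-derivative estimate from Section~\ref{elliptic} jointly provide enough equi-integrability and compactness, via an Aubin--Lions argument applied to $u_R^{m}$, to extract a limit $u$ of $u_R$ as $R\to\infty$ satisfying Definition~\ref{def21} on $M\times(0,T)$ for every $T>0$; the quantitative bounds transfer to $u$ by lower semicontinuity. The most delicate step will be the bookkeeping of the Moser iteration reproducing the precise exponents stated: the auxiliary parameter $r$ must be chosen so that the Hölder balance between $L^{m}$, $L^{p_0}$ and the Sobolev target norm gives a convergent telescoping series with the exact exponents $\gamma,\delta_1,\delta_2,s$. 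A secondary technical point is carrying out the continuity--bootstrap for the $L^{p_0}$ smallness uniformly in $R$, so that the iteration is permissible at every positive time and the $L^q$ monotonicity survives the limit.
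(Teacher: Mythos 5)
Your overall architecture (exhaustion by balls, $L^q$ energy estimates with Sobolev, propagation of the $L^{p_0}$ smallness by a continuity--bootstrap, smoothing, passage to the limit) matches the paper's. Several of your deviations are harmless: the paper additionally truncates the reaction to $T_k(u^p)$ to have globally defined approximations and then passes to the limit in $k$, $R$ and $h$ by \emph{monotone} convergence rather than by Aubin--Lions compactness, and it obtains the $L^{p_0}\to L^q$ smoothing \eqref{eq23} by a discrete Moser iteration in time rather than by your ODE closure; your ODE route (interpolating $L^q$ between $L^{p_0}$, which is non-increasing, and the Sobolev target $L^{(q+m-1)N/(N-2)}$) is a legitimate alternative that yields the same exponents by scaling, provided you note that for $q=p_0$ the interpolation degenerates and \eqref{eq23} reduces to the plain monotonicity bound.

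The genuine gap is in the $L^\infty$ estimate. A parabolic Moser iteration "along a dyadic sequence of exponents starting from $L^r$", with the reaction absorbed by smallness, produces a bound of the form $\|u(t)\|_{L^\infty}\le C\,t^{-1/(p-1)}\|u_0\|_{L^{p_0}}^{(p-m)/(p-1)}$ (the standard Alikakos-type smoothing). That is a different functional form from the one claimed in the theorem: it contains no factor $\|u_0\|_{L^m}^{(s-1)/s}$, no parameter $s=1+\frac2N-\frac1r$, and no two-term bracket with $\delta_1,\delta_2$; moreover, as the paper's own remark after Theorem \ref{teo22} points out, for $p\ge m(1+\frac2N)$ the stated rate $t^{-\gamma/(ms)}$ is strictly \emph{faster} than $t^{-1/(p-1)}$, so the iteration bound cannot imply the stated one. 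Your sentence asserting that "the first step of the iteration introduces the factor $\|u_0\|_{L^m}^{(s-1)/s}$ through a Hölder interpolation" is not an argument --- it reverse-engineers the answer. The actual mechanism in the paper is elliptic, not parabolic: by the Aronson--B\'enilan estimate (Proposition \ref{prop43}), $w=u(\cdot,t)$ satisfies $-\Delta w^m\le w^p+\frac{w}{(m-1)t}$, and one applies the Stampacchia-type sup bound of Proposition \ref{prop1} to the subsolution $w^m$ with right-hand side $f=w^p+\frac{w}{(m-1)t}\in L^r$, $r>N/2$; this is exactly where $s=1+\frac2N-\frac1r$ and the factor $\|w^m\|_{L^1}^{(s-1)/s}\le\|u_0\|_{L^m}^{m(s-1)/s}$ enter, while $\|w^p\|_{L^r}=\|w\|_{L^{pr}}^p$ and $\|w\|_{L^r}$ are controlled by the $L^{p_0}\to L^{pr}$ and $L^{p_0}\to L^{r}$ smoothing estimates (this is why $r>p_0$ is required). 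You cite the B\'enilan--Crandall estimate only as a compactness tool; without using it together with the elliptic $L^\infty$ lemma, your proof does not reach the inequality actually asserted in the theorem.
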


\begin{remark}\label{remark4}\rm
We notice that the proof of the above theorem will show that one can take an explicit value of $\varepsilon_0$ in \eqref{epsilon0}. In fact,
let $q_0>1$ be fixed and $\{q_n\}_{n\in\mathbb{N}}$ be the sequence defined by:
\begin{equation*}
\begin{aligned}
q_n=\frac{N}{N-2}(m+q_{n-1}-1), \ \ \ \ \forall n\in\mathbb{N},
\end{aligned}
\end{equation*}
so that
\begin{equation}\label{eq400}
q_n=\left(\frac{N}{N-2}\right)^{n}q_0+\frac{N(m-1)}{N-2} \sum_{i=0}^{n-1} \left(\frac{N}{N-2}\right)^i.
\end{equation}
Clearly, $\{q_n\}$ is increasing and $q_n \longrightarrow +\infty$ as $n\to +\infty$.
Fix $q\in [q_0,+\infty)$ and let $\bar n$ be the first index such that $q_{\bar n}\ge q$. Define
\begin{equation}\label{eq20}
\tilde \varepsilon_0=\tilde \varepsilon_0(p,m,N,C_s,q, q_0):=\left[\min \left\{\min_{n=0,...,\bar n}\frac{2m( q_n-1)}{(m+q_n-1)^2}C_s^2;\,\,\frac{2m( p_0-1)}{(m+p_0-1)^2}C_s^2\right\}\right]^{\frac1{p-m}}.
\end{equation}
Observe that $\varepsilon_0$ in \eqref{eq20} depends on the value of $q$ through the sequence $\{q_n\}$. More precisely, $\bar n$ is increasing with respect to $q$, while the quantity $\min_{n=0,...,\bar n}\frac{2m( q_n-1)}{(m+q_n-1)^2}C_s^2$ decreases w.r.t. $q$. 
We then let $q_0=p_0$, take $q=pr$ and define, for these choice of $q_0,q$,
\[\varepsilon_0=\varepsilon_0(p, m, N, C_s, r)=\tilde \varepsilon_0(p, m, N, C_s, pr, p_0)\,.\]

Furthermore, in \eqref{eps3a} we can take
\begin{equation}\label{epsilon0aaa}
\hat \varepsilon_0=\hat\varepsilon_0(p, m , N, C_s, q)=\tilde\varepsilon_0(p, m, N, C_s, q, p_0)\,.
\end{equation}

Similarly, one can choose the following explicit value for $\varepsilon$ in \eqref{epsilon1}:
\begin{equation}\label{epsilon000}
\varepsilon= \bar\varepsilon\wedge \varepsilon_0,
\end{equation}
where
\[\bar\varepsilon=\bar\varepsilon(p, m, C_s, q):=\left[\min \left\{\frac{2m(q-1)}{(m+q-1)^2}C_s^2;\,\,\frac{2m(p_0-1)}{\left(m+p_0-1\right)^2}C_s^2\right\}\right]^{\frac1{p-m}}\,.\]
\end{remark}

\begin{remark}\rm
Observe that, for $M=\mathbb R^N$, in \cite[Theorem 3, pag. 220]{SGKM}  it is shown that if $p>m+\frac 2 N$ and $u_0$ has compact support and is small enough, then the solution to problem \eqref{problema} globally exists and decays like $$t^{-\frac{1}{p-1}} \quad \textrm{ as}  \quad t\longrightarrow +\infty.$$ Note that under these assumptions, Theorem \ref{teo22} can be applied. It implies that the solution to problem \eqref{problema} globally exists and decays like
$$t^{-\frac{\gamma}{ms}} \quad \textrm{  as }  \quad t\longrightarrow +\infty.$$
It is easily seen that, for any $p\ge m\left(1+\frac{2}{N}\right)$,
$$\frac{\gamma}{ms}\,\ge\, \frac{1}{p-1};$$
instead, for any $m+\frac 2 N<p<m\left(1+\frac{2}{N}\right)$,
\[\frac{\gamma}{ms}\,<\, \frac{1}{p-1}.\]
Hence, when $p\ge m\left(1+\frac{2}{N}\right)$ the decay's rate of the solution $u(t)$, for large times, given by Theorem \ref{teo22} is better than that of  \cite[Theorem 3, pag. 220]{SGKM}, while the opposite is true for $m+\frac 2 N<p<m\left(1+\frac{2}{N}\right)$. In both cases, the class of initial data considered in Theorem \ref{teo22} is wider.
\end{remark}

\medskip

In the next theorem, we address the case that $p>m$, supposing that both the inequalities \eqref{S} and \eqref{P} hold on $M$.

\begin{theorem}\label{teo71}
Let $M$ be a complete, noncompact manifold of infinite volume such that the Sobolev inequality \eqref{S} and the Poincaré inequality \eqref{P} hold. Let
$$
m>1,\quad p>m, \quad r>\,\frac N2,
$$
and $u_0\in{\textrm L}^{\theta}(M)\cap{\textrm L}^{pr}(M)$ where $\theta=\min\{m,r\}$, $u_0\ge0$. Let
$$
s=1+\frac 2N-\frac 1r.
$$
Assume that
\begin{equation}\label{epsilon11}
\left\| u_0\right\|_{L^{p\frac N2}(M)}\,<\,\varepsilon_1
\end{equation}
holds with $\varepsilon_1=\varepsilon_1(m,p,N,r, C_p,C_s)$ sufficiently small. Then problem \eqref{problema} admits a solution for any $T>0$,  in the sense of Definition \ref{def21}. Moreover for any $\tau>0$ one has $u\in L^{\infty}(M\times(\tau,+\infty))$ and for all $t>0$ one has
\begin{equation*}
\|u(t)\|_{L^{\infty}(M)}
\le\left(\frac{s}{s-1}\right)^{\frac 1m}\|u_{0}\|_{L^{m}(M)}^{\frac{s-1}{s}}\left[ \|u_{0}\|_{L^{pr}(M)}^{p}+\frac{1}{(m-1)t}\|u_{0}\|_{L^{r}(M)}\right]^{\frac{1}{ms}}.
\end{equation*}
Moreover, suppose that $u_0\in {\textrm L}^q(M)\cap L^{\theta}(M)\cap L^{pr}(M)$ for some for $1<q<\infty$,
\begin{equation}\label{epsilon2a}
\|u_0\|_{L^{p\frac N2}(M)}<\varepsilon_2,
\end{equation}
for some $\varepsilon_2=\varepsilon_2(p, m ,N, r, C_p, C_s, q)$ sufficiently small. Then
\begin{equation}\label{eq721}
\|u(t)\|_{L^q(M)}\le  \|u_{0}\|_{L^q(M)}\quad \textrm{for all }\,\, t>0\,.
\end{equation}

\end{theorem}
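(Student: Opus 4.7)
I would mimic the strategy used for Theorem~\ref{teo22}: construct the solution as the monotone limit, as $R\to+\infty$, of solutions $u_R$ to the homogeneous Dirichlet problem for~\eqref{problema} on the geodesic balls $B_R(x_0)$, and derive every estimate uniformly in $R$. Short-time existence of $u_R$ is classical. The Poincar\'e inequality~\eqref{P} is what weakens the exponent condition from $p>m+2/N$ to $p>m$, and the Benilan--Crandall-type estimate from Section~\ref{elliptic} is what produces the $1/[(m-1)t]$ summand inside the bracket of the $L^\infty$ bound.

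\noindent\textbf{The $L^q$ monotonicity~\eqref{eq721}.} Testing the equation for $u_R$ against $u_R^{q-1}$ yields
\begin{equation*}
\frac{1}{q}\frac{d}{dt}\|u_R\|_q^q + \frac{4m(q-1)}{(m+q-1)^2}\|\nabla v\|_2^2 \;=\; \int_{B_R} u_R^{p+q-1}\,d\mu,\qquad v:=u_R^{(m+q-1)/2}.
\end{equation*}
To dominate the reaction I split $u^{p+q-1}=u^{p-m}\cdot u^{m+q-1}$ and apply H\"older with the conjugate pair $(r_1,r_1')$ where $r_1:=pN/[2(p-m)]$. A direct computation gives $2r_1'=2pN/\bigl(p(N-2)+2m\bigr)\in[2,2^*]$ precisely because $p>m$, so that interpolating~\eqref{P} and~\eqref{S} (both applied to $v$) produces $\|v\|_{2r_1'}\le C(C_p,C_s)\|\nabla v\|_2$, and hence
\begin{equation*}
\int_{B_R} u_R^{p+q-1}\,d\mu \;\le\; C\,\|u_R\|_{L^{pN/2}}^{p-m}\,\|\nabla v\|_2^2.
\end{equation*}
When $C\,\|u_R\|_{L^{pN/2}}^{p-m}<4m(q-1)/(m+q-1)^2$ the gradient term is absorbed and $t\mapsto\|u_R(t)\|_q$ is non-increasing. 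A bootstrap first performed at $q=pN/2$ itself, using~\eqref{epsilon2a}, propagates the smallness of $\|u_R(t)\|_{L^{pN/2}}$ to every $t>0$; the argument then closes for every $q>1$, and Fatou's lemma delivers~\eqref{eq721} in the limit $R\to+\infty$.

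\noindent\textbf{The $L^\infty$ smoothing.} Rewriting the same identity with the Sobolev lower bound $\|\nabla v\|_2^2\ge C_s^2\|u_R\|_{(m+q-1)N/(N-2)}^{m+q-1}$ and handling the reaction by H\"older against $L^{pr}$ (so that monotonicity gives $\|u_R(t)\|_{L^{pr}}\le\|u_0\|_{L^{pr}}$), Young's inequality leads to a differential inequality of the form
\begin{equation*}
\frac{d}{dt}\|u_R\|_q^q + c\,\|u_R\|_{(m+q-1)N/(N-2)}^{m+q-1} \;\le\; C\,\|u_R\|_{L^{pr}}^{p(m+q-1)/m}.
\end{equation*}
On this inequality I would run the Moser iteration along the sequence $\{q_n\}$ of Remark~\ref{remark4}, using the already-established monotonicity of $\|u_R(\cdot)\|_{q_n}$ to pass from integrated to pointwise bounds on a telescoping family of times, and inserting the Benilan--Crandall estimate of Section~\ref{elliptic} to account for the $\|u_0\|_{L^r}/[(m-1)t]$ contribution inside the bracket. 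Collapsing the resulting geometric sums in powers of $1/s$ produces the prefactor $(s/(s-1))^{1/m}$ and the exponents $(s-1)/s$ on $\|u_0\|_m$ and $1/(ms)$ on the bracket, exactly as stated. Letting $R\to+\infty$ concludes.

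\noindent\textbf{Main difficulty.} The decisive new step is the mixed H\"older--Sobolev--Poincar\'e interpolation: the choice $r_1=pN/[2(p-m)]$ lands $2r_1'$ in $[2,2^*]$ precisely when $p>m$, so that the $L^{(p-m)N/2}$ space of Theorem~\ref{teo22} (meaningful only under $p>m+2/N$) is replaced by $L^{pN/2}$, valid under the sole assumption $p>m$. After that, the bootstrap, the Moser iteration, the insertion of the Benilan--Crandall term, and the limit $R\to+\infty$ are conceptually standard, but they require careful bookkeeping of constants to ensure that the exponents $(s-1)/s$ and $1/(ms)$ emerge and that the time dependence sits as $1/[(m-1)t]$ inside the bracket rather than as an external power of $t$.
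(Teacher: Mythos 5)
Your treatment of the $L^q$ monotonicity \eqref{eq721} is essentially the paper's Lemma \ref{lemma71} in a slightly different packaging: you apply H\"older directly to $u^{p-m}\cdot u^{m+q-1}$ with the exponent $r_1=pN/[2(p-m)]$ and then interpolate $\|v\|_{2r_1'}$ between $L^2$ and $L^{2^*}$ (Poincar\'e plus Sobolev), whereas the paper first interpolates $\|u\|_{p+q-1}$ between $L^{m+q-1}$ and $L^{p+m+q-1}$ and splits the gradient term with weights $c_1+c_2=1$ and an exponent $\alpha=m/p$. Both versions reduce to absorbing $C\,\|u\|_{L^{pN/2}}^{p-m}\|\nabla v\|_2^2$ into the dissipation and then bootstrapping the smallness of $\|u(t)\|_{L^{pN/2}}$ forward in time, so this part is sound (modulo the fact that the paper works with the truncated reaction $T_k(u^p)$ on $B_R$ to have global-in-time approximate solutions, while you invoke only short-time existence and would need a continuation argument).

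The genuine gap is in the $L^\infty$ bound. The paper does \emph{not} run a parabolic Moser iteration here (that is the mechanism of Theorem \ref{teo22}, which needs $p>m+\frac2N$ to make the iteration start from $p_0>1$). Instead, Proposition \ref{prop71} treats the Aronson--B\'enilan inequality $-\Delta u^m(\cdot,t)\le u^p(\cdot,t)+\frac{u(\cdot,t)}{(m-1)t}$ as a \emph{stationary} elliptic inequality at each fixed $t$ and applies the Stampacchia-type $L^1$--$L^\infty$ estimate of Proposition \ref{prop1} with $f=u^p+\frac{u}{(m-1)t}\in L^r$, $r>\frac N2$. That single elliptic step is what produces the exact structure of the stated bound: the factor $\frac{s}{s-1}$ comes from the level-set Lemma \ref{lemma1}, the exponent $\frac{s-1}{s}$ from $\|w^m\|_{L^1}^{(s-1)/s}=\|w\|_{L^m}^{m(s-1)/s}$, the exponent $\frac1{ms}$ from $\|f\|_{L^r}^{1/s}$, and the additive $\frac{1}{(m-1)t}\|u_0\|_{L^r}$ \emph{inside} the bracket from the triangle inequality on $f$ followed by the $L^{pr}$, $L^r$, $L^m$ monotonicity of Lemma \ref{lemma71}. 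A parabolic Moser iteration along $\{q_n\}$ would instead deliver a bound of the form $Ct^{-\gamma}\|u_0\|_{q_0}^{\delta}$ with an infinite-product constant and a purely multiplicative power of $t$; you acknowledge yourself that making the time dependence appear additively inside the bracket and making $(s/(s-1))^{1/m}$ emerge from ``collapsing geometric sums'' requires bookkeeping you do not supply, and I do not see how it could be supplied, since the parameter $s=1+\frac2N-\frac1r$ is intrinsically tied to the elliptic $L^r\to L^\infty$ estimate and has no natural role in the parabolic iteration. The missing idea is precisely the combination Aronson--B\'enilan $+$ elliptic Stampacchia estimate $+$ $L^q$ monotonicity at $q=m,r,pr$.
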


\begin{remark}\label{remark5}\rm
We define, given $q>1$:
\begin{equation}\label{hp}
\tilde\varepsilon_1(q):=\left[\min \left\{\frac{2m(q-1)}{(m+q-1)^2}C;\,\frac{2m\left(p\frac N2-1\right)}{\left(m+p\frac N2-1\right)^2}C\right\}\right]^\frac{p+m+q-1}{p(p+q-1)-m(m+q-1)}
\end{equation}
where $C=C_p^{2m/p}\,\tilde C$ and  $\tilde C=\tilde C(C_s,m,p,q)>0$ is defined in \eqref{tildec} below, with the choice $\theta:=\frac{m(m+q-1)}{p(p+q-1)}$. The proof will show that one can choose $\varepsilon_1:=\min_{i=1,\ldots,4}\tilde\varepsilon_1(q_i)$ where $q_1=m$, $q_2=p$, $q_3=pr$ and $q_4=r$.

Similarly, we observe that in \eqref{epsilon2a} we can choose
\begin{equation}\label{hp2a}
\varepsilon_2=\varepsilon_1 \wedge \tilde \varepsilon_1(q)\,.
\end{equation}
\end{remark}

In the next sections we always keep the notation as in Remarks \ref{remark4} and \ref{remark5}.

\subsection{Weighted, Euclidean reaction-diffusion problems}
We consider a \it weight \rm $\rho:\R^N\to\R$ such that
\begin{equation}\label{rho2}
\rho \in C(\R^N)\cap L^{\infty}(\R^N), \ \ \rho(x)>0 \,\, \text{for any}\,\, x\in \R^N.
\end{equation}

Solutions to problem \eqref{problema2} are meant according to the following definition.
\begin{definition}\label{def23}
Let $m>1$, $p>m$ and $u_0\in{\textrm L}^{1}_{\rho,\textit{loc}}(\R^N)$, $u_0\ge0$. Let the weight $\rho$ satisfy \eqref{rho2}. We say that the function $u$ is a solution to problem \eqref{problema2} in the interval $[0, T)$ if
$$
u\in L^p_{\rho,loc}(\mathbb R^N\times(0,T))\,\,\,
$$
and for any $\varphi \in C_c^{\infty}(\mathbb R^N\times[0,T])$ such that $\varphi(x,T)=0$ for any $x\in \mathbb R^N$, $u$ satisfies the equality:
\begin{equation*}
\begin{aligned}
-\int_0^T\int_{\mathbb{R}^N} \,u\,\varphi_t\,\rho(x)\,dx\,dt =&\int_0^T\int_{\mathbb R^N} u^m\,\Delta \varphi\,dx\,dt\,+ \int_0^T\int_{\mathbb R^N} \,u^p\,\varphi\,\rho(x)\,dx\,dt \\
& +\int_{\mathbb R^N} \,u_0(x)\,\varphi(x,0)\,\rho(x)\,dx.
\end{aligned}
\end{equation*}
\end{definition}

First we consider the case that $p>m+\frac 2 N$. Recall that since $\rho$ is bounded, the Sobolev inequality \eqref{S-pesi} necessarily holds.


\begin{theorem}\label{teo24}

Let $\rho$ satisfy \eqref{rho2}. Let $m>1$, $p>m+\frac{2}{N}$ and $u_0\in L^{m}_{\rho}(\R^N)\cap{\textrm L}_{\rho}^{p_0}(\mathbb R^N)$, $u_0\ge0$ with $p_0$ defined in \eqref{p0}. Let
$$
r>\,\max\left\{p_0, \frac N2\right\}, \quad\quad s=1+\frac{2}{N}-\frac 1r.
$$
Assume that
\begin{equation*}
\|u_0\|_{{\textrm L}_{\rho}^{p_0}(\mathbb R^N)}\,<\,\varepsilon_0
\end{equation*}
holds, with $\varepsilon_0=\varepsilon_0(p,m,N,r,C_s)$ sufficiently small. Then problem \eqref{problema2} admits a solution for any $T>0$,  in the sense of Definition \ref{def23}. Moreover, for any $\tau>0,$ one has $u\in L^{\infty}(\mathbb R^N\times(\tau,+\infty))$ and there exist $\Gamma>0$ such that, for all $t>0$, one has
\begin{equation*}
\|u(t)\|_{L^{\infty}(\R^N)}
\le \Gamma\, t^{-\frac{\gamma}{ms}}\left\{\|u_0\|_{L^{p_0}_{\rho}(\R^N)}^{\delta_{1}}+\frac{1}{m-1}\,\|u_0\|_{L^{p_0}_{\rho}(\R^N)}^{\delta_{2}} \right\}^{\frac{1}{ms}}\|u_0\|_{L^{m}_{\rho}(\R^N)}^{\frac{s-1}{s}},
\end{equation*}
where
\begin{equation*}
\gamma= \frac{p}{p-1}\left[1-\frac{N(p-m)}{2\,p\,r}\right],\,\,\delta_1=p\,\frac{p-m}{p-1}\left[1+\frac{N(m-1)}{2\,p\,r}\right],\,\delta_2=\frac{p-m}{p-1}\left[1+\frac{N(m-1)}{2\,r}\right]
\end{equation*}
Moreover, let  $p_0\le q<\infty$ 
and
\begin{equation*}
\|u_0\|_{L^{p_0}_{\rho}(\mathbb R^N)}< \hat \varepsilon_0
\end{equation*}
for $\hat\varepsilon_0=\hat\varepsilon_0(p, m , N, r, C_s, q)$ small enough.
Then there exists a constant $C=C(m,p,N,\varepsilon_0,C_s, q)>0$ such that
\begin{equation*}
\|u(t)\|_{L_{\rho}^q(\R^N)}\le C\,t^{-\gamma_q} \|u_{0}\|^{\delta_q}_{L_{\rho}^{p_0}(\R^N)}\quad \textrm{for all }\,\, t>0\,,
\end{equation*}
where
$$
\gamma_q=\frac{1}{p-1}\left[1-\frac{N(p-m)}{2q}\right],\quad \delta_q=\frac{p-m}{p-1}\left[1+\frac{N(m-1)}{2q}\right]\,.
$$

Finally, for any $1<q<\infty$, if $u_0\in  \textrm L_{\rho}^q(\R^N)\cap\textrm L_{\rho}^{p_0}(\R^N)\cap \textrm L_{\rho}^{m}(\R^N)$ and
\begin{equation*}
\|u_0\|_{\textrm L^{p_0}_{\rho}(\mathbb R^N)}\,<\,\varepsilon 
\end{equation*}
holds, with $\varepsilon=\varepsilon(p,m,N, r, C_s,q)$ sufficiently small, then
\begin{equation*}
\|u(t)\|_{L_{\rho}^q(\R^N)}\le  \|u_{0}\|_{L_{\rho}^q(\R^N)}\quad \textrm{for all }\,\, t>0\,.
\end{equation*}

\end{theorem}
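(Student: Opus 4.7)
The plan is to transport the entire argument used for Theorem \ref{teo22} to the weighted setting, exploiting the fact that \eqref{S-pesi} has precisely the shape needed: the right hand side is the unweighted Dirichlet energy, while the left hand side is a $\rho$-weighted norm. First I would introduce approximating problems on Euclidean balls $B_n$ with zero Dirichlet data and initial datum $u_{0,n}=\min\{u_0,n\}\chi_{B_n}$. Standard quasilinear parabolic theory (together with the Benilan--Crandall type bound derived in Section \ref{elliptic}) yields smooth, nonnegative approximating solutions $u_n$ defined on a maximal interval $(0,T_n)$, together with a monotone approximation scheme that will allow passage to the limit in the very weak sense of Definition \ref{def23}.

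Next, for $q>1$, I would multiply the equation by $u_n^{q-1}$ and integrate in space against $dx$. Because $\rho$ multiplies both the time derivative and the reaction term in \eqref{problema2}, but not the diffusion, the identity takes the clean form
\begin{equation*}
\frac{1}{q}\frac{d}{dt}\int_{\mathbb{R}^N} u_n^q\,\rho\,dx \,+\, \frac{4m(q-1)}{(m+q-1)^2}\int_{\mathbb{R}^N}\bigl|\nabla u_n^{(m+q-1)/2}\bigr|^2\,dx \,=\, \int_{\mathbb{R}^N} u_n^{p+q-1}\,\rho\,dx.
\end{equation*}
Setting $v_n:=u_n^{(m+q-1)/2}$, the reaction term equals $\int v_n^{2(p+q-1)/(m+q-1)}\rho\,dx$, and Hölder's inequality (in the measure $\rho\,dx$) interpolates this between $\|v_n\|_{L^{2^*}_{\rho}}^2$ and $\|u_n\|_{L^{p_0}_{\rho}}^{p-m}$ with $p_0=(p-m)N/2$. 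Applying the weighted Sobolev inequality \eqref{S-pesi} bounds $\|v_n\|_{L^{2^*}_{\rho}}^2$ by $C_s^{-2}\|\nabla v_n\|_{L^2}^2$, and the smallness assumption $\|u_0\|_{L^{p_0}_\rho}<\varepsilon_0$, combined with the monotonicity of the $L^{p_0}_\rho$ norm along $u_n$, lets me absorb the reaction term into the diffusion term. This gives $\frac{d}{dt}\|u_n\|_{L^q_\rho}^q \le 0$, proving both that $\|u_n(t)\|_{L^q_\rho}\le\|u_{0}\|_{L^q_\rho}$ (the final claim) and that the $L^{p_0}_\rho$ smallness propagates in time, justifying the absorption a posteriori.

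Armed with uniform $L^q_\rho$ control for every $q\in[p_0,\infty)$, I would run a Moser iteration along the sequence $q_n$ of Remark \ref{remark4}. At each step the Sobolev inequality upgrades $L^{m+q_{n-1}-1}_\rho$ into $L^{q_n}_\rho$ with quantitative constants; the resulting geometric sum yields the $L^\infty$ smoothing bound with the specific exponents $\gamma/(ms)$, $\delta_1$, $\delta_2$ in the statement, once one tracks the interpolation between $L^m_\rho$ (coming from the $s-1$ factor) and $L^{p_0}_\rho$ (coming from the reaction). The intermediate estimate \eqref{eq23} for $p_0\le q<\infty$ follows by stopping the iteration at the appropriate generation and using the improved $\hat\varepsilon_0$ of \eqref{epsilon0aaa}.

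The main technical obstacle is the passage to the limit $n\to\infty$: one must show that the weak-$\ast$ limit of $u_n$ solves \eqref{problema2} in the distributional sense of Definition \ref{def23} and, more delicately, that the $L^\infty$ and $L^q_\rho$ bounds survive the limit. Uniform equicontinuity in time follows from the Benilan--Crandall bound from Section \ref{elliptic}, while the uniform $L^p_{\rho,loc}(\mathbb R^N\times(0,T))$ bound needed to pass to the limit in the reaction term is a direct consequence of the monotonicity of $\|u_n(t)\|_{L^q_\rho}$ for $q$ slightly larger than $p$. The rest of the argument is a rigorous rewriting of the Riemannian proof with $d\mu$ replaced by $\rho\,dx$ in the correct places; no new functional-analytic input beyond \eqref{S-pesi} is needed, in perfect parallel with how Theorem \ref{teo22} rests solely on \eqref{S}.
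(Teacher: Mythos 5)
Your overall architecture is the right one and matches the paper's: the proof of Theorem \ref{teo24} is obtained by rerunning the proof of Theorem \ref{teo22} verbatim with $d\mu$ replaced by $\rho\,dx$ where appropriate, using the weighted Sobolev inequality \eqref{S-pesi} (whose right-hand side is the unweighted Dirichlet energy, exactly as you observe), the weighted $L^q_\rho$ monotonicity (Lemma \ref{lemma63}), the weighted finite-$q$ smoothing estimate (Proposition \ref{prop61}) and the weighted Aronson--B\'enilan estimate (Proposition \ref{prop62}), followed by monotone passage to the limit in the approximating problems on balls.

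There is, however, a genuine gap in your route to the $L^\infty$ bound. You propose to obtain it by running the Moser iteration all the way to $q=\infty$. This cannot work under the stated hypotheses: the smallness threshold $\tilde\varepsilon_0(q)$ in \eqref{eq20} contains the factor $\min_{n\le\bar n}\frac{2m(q_n-1)}{(m+q_n-1)^2}C_s^2\sim q_{\bar n}^{-1}$, which tends to $0$ as $q\to\infty$, so the iteration constants degenerate and the admissible initial data shrink to $u_0\equiv 0$; the paper states this explicitly in the remark following Proposition \ref{prop42}. The actual mechanism producing the exponents $s=1+\frac2N-\frac1r$, $\gamma$, $\delta_1$, $\delta_2$ is different: one stops the Moser iteration at the \emph{finite} levels $q=pr$ and $q=r$ (this is where the hypothesis $r>\max\{p_0,N/2\}$ enters), then invokes the Aronson--B\'enilan differential inequality $-\Delta u^m\le\rho u^p+\frac{\rho}{(m-1)t}u$ of Proposition \ref{prop62}, and finally applies the elliptic Stampacchia-type estimate of Proposition \ref{prop1} to $w=u(\cdot,t)$ with right-hand side $f=\rho u^p+\frac{\rho}{(m-1)t}u\in L^r$, $r>\frac N2$; the two terms of $f$, estimated via the finite-$q$ smoothing bounds, are exactly what generates the structure $\{\|u_0\|^{\delta_1}+\frac1{m-1}\|u_0\|^{\delta_2}\}^{1/(ms)}$, while $\|u_0\|_{L^m_\rho}^{(s-1)/s}$ comes from the $\|w^m\|_{L^1}^{(s-1)/s}$ factor in \eqref{eqa3}. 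In your write-up the Aronson--B\'enilan bound is relegated to a time-equicontinuity role, so the $L^\infty$ estimate is left unproved. Two smaller points: (i) you should truncate the reaction to $T_k(u^p)$ in the approximate problems (as in \eqref{eq31}/\eqref{eq61}) so that the approximating solutions exist globally by standard theory before any a priori bound is available, otherwise your maximal intervals $(0,T_n)$ create a circularity; (ii) in applying the unweighted elliptic estimate of Proposition \ref{prop1} to $f=\rho u^p+\frac{\rho}{(m-1)t}u$ one uses the boundedness of $\rho$ from \eqref{rho2} to pass between $L^r$ and $L^r_\rho$ norms of the right-hand side.
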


A quantitative form of the smallness condition on $u_0$ in the above theorem can be given exactly as in Remark \ref{remark4}, see in particular \eqref{eq20}, \eqref{epsilon0aaa} and \eqref{epsilon000}.

In the next theorem, we address the case $p>m$. We suppose that the Poincaré inequality \eqref{P-pesi} holds.

\begin{theorem}\label{teo71W}
Let $\rho$ satisfy \eqref{rho2} and assume that the inequality 
\eqref{P-pesi} hold. Let
$$
m>1,\quad p>m, \quad r>\,\frac N2,
$$
and $u_0\in{\textrm L}^{\theta}_{\rho}(\R^N)\cap{\textrm L}^{pr}_{\rho}(\R^N)$ where $\theta=\min\{m,r\}$, $u_0\ge0$. Let
$$
s=1+\frac 2N-\frac 1r.
$$
Assume that
\[
\left\| u_0\right\|_{L_\rho^{p\frac N2}({\mathbb R}^N)}\,<\,\varepsilon_1
\]
holds with $\varepsilon_1=\varepsilon_1(m,p,N,r, C_p,C_s)$ sufficiently small
Then problem \eqref{problema2} admits a solution for any $T>0$,  in the sense of Definition \ref{def23}. Moreover, for any $\tau>0$ one has $u\in L^{\infty}(\R^N\times(\tau,+\infty))$ and for all $t>0$ one has
\begin{equation*}
\|u(t)\|_{L^{\infty}(\R^N)}
\le\left(\frac{s}{s-1}\right)^{\frac 1m}\|u_{0}\|_{L^{m}_{\rho}(\R^N)}^{\frac{s-1}{s}}\left[ \|u_{0}\|_{L^{pr}_{\rho}(\R^N)}^{p}+\frac{1}{(m-1)t}\|u_{0}\|_{L^{r}_{\rho}(\R^N)}\right]^{\frac{1}{ms}}.
\end{equation*}
Moreover, suppose that $u_0\in {\textrm L}^q_{\rho}(\mathbb R^N)\cap  {\textrm L}^\theta_{\rho}(\mathbb R^N)\cap  {\textrm L}^{pr}_{\rho}(\mathbb R^N)$ for some for $1<q<\infty$,
\[\|u_0\|_{L^{p\frac N2}_{\rho}(\mathbb R^N) }< \varepsilon_2, \]
for some $\varepsilon_2=\varepsilon_2(p, m, N, r, C_p, C_s, q)$ small enough. Then
\begin{equation*}
\|u(t)\|_{L^q_{\rho}(\R^N)}\le  \|u_{0}\|_{L^q_{\rho}(\R^N)}\quad \textrm{for all }\,\, t>0\,.
\end{equation*}
\end{theorem}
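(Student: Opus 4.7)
My plan is to mirror the proof of Theorem \ref{teo71} in the Riemannian case, leveraging the fact that the core estimates are functional analytic and depend only on \eqref{S-pesi} and \eqref{P-pesi}. First I would approximate \eqref{problema2} by a family of Dirichlet problems $\rho (u_R)_t = \Delta u_R^m + \rho u_R^p$ on balls $B_R(0)\subset \mathbb R^N$ with smooth truncated nonnegative initial data $u_{0,R}\to u_0$, producing smooth approximating solutions $u_R$ via standard parabolic theory for degenerate PMEs with a source on bounded domains. The task is then to prove uniform-in-$R$ $L^q_\rho$ and $L^\infty$ bounds, together with the Benilan--Crandall-type estimate of Section \ref{elliptic}, and pass to the limit $R\to+\infty$.

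The core energy identity arises from multiplying by $u_R^{q-1}$ and integrating over $B_R$. The factor $\rho$ on the time derivative matches the weight inside $\|u_R\|_{L^q_\rho}^q$, while the diffusion term integrates by parts in an unweighted way, giving
\begin{equation*}
\frac{1}{q}\frac{d}{dt}\int_{B_R}\rho\, u_R^q\,dx + \frac{4m(q-1)}{(m+q-1)^2}\int_{B_R}\bigl|\nabla u_R^{(m+q-1)/2}\bigr|^2\,dx = \int_{B_R}\rho\, u_R^{p+q-1}\,dx.
\end{equation*}
Writing $u^{p+q-1}=u^{p-m}\cdot u^{m+q-1}$ and applying Hölder with conjugate exponents $\alpha=\frac{pN}{2(p-m)}$ and $\alpha'=\frac{pN}{pN-2(p-m)}$, the conditions $p>m$ and $N\ge 3$ force $2\alpha'\in(2,2^*)$. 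Interpolating \eqref{P-pesi} and \eqref{S-pesi} then yields $\|w\|_{L^{2\alpha'}_\rho}^2 \le \tilde C^{-1}\|\nabla w\|_{L^2}^2$ for $w=u_R^{(m+q-1)/2}$, with $\tilde C=\tilde C(C_p,C_s,m,p,q,N)>0$ explicit, so that
\begin{equation*}
\int_{B_R}\rho\, u_R^{p+q-1}\,dx \le \tilde C^{-1}\,\|u_R\|_{L^{pN/2}_\rho}^{\,p-m}\bigl\|\nabla u_R^{(m+q-1)/2}\bigr\|_{L^2}^{2}.
\end{equation*}
A continuity/bootstrap argument on the approximations, applied first at the critical level $q=pN/2$ and then propagated to the values $q\in\{m,r,pr,q\}$ appearing in Remark \ref{remark5}, shows that the smallness assumption $\|u_0\|_{L^{pN/2}_\rho}<\varepsilon_1$ with $\varepsilon_1$ given by \eqref{hp} guarantees absorption of the reaction term, whence $t\mapsto \|u_R(t)\|_{L^q_\rho}$ is non-increasing uniformly in $R$. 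This yields both the $L^q$ contraction claimed in the last display and uniform $L^m_\rho\cap L^{pr}_\rho\cap L^{r}_\rho$ control.

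For the $L^\infty$ bound I would combine the Benilan--Crandall-type estimate of Section \ref{elliptic} (which controls $(u_R^{m-1})_t$ by $-\frac{1}{(m-1)t}u_R^{m-1}$ plus a reaction correction involving $\|u_0\|_{L^{pr}_\rho}$) with a Moser--Gagliardo--Nirenberg interpolation against the $L^m_\rho$ bound. The two contributions produce, respectively, the prefactor $\|u_0\|_{L^m_\rho}^{(s-1)/s}$ and the bracket $\bigl[\|u_0\|_{L^{pr}_\rho}^{p}+((m-1)t)^{-1}\|u_0\|_{L^r_\rho}\bigr]^{1/(ms)}$ in the stated estimate. Finally, the uniform energy and $L^\infty$ bounds combined with Aubin--Lions compactness give a subsequence $u_{R_k}\to u$ converging to a nonnegative very weak solution of \eqref{problema2} in the sense of Definition \ref{def23}, inheriting all the claimed bounds. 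The main obstacle I expect is ensuring that the smallness threshold $\varepsilon_1$ obtained from absorption at $q=pN/2$ is simultaneously strong enough to yield decay of all the $L^q_\rho$ norms required in the Moser step and self-consistent when the bootstrap is run on $u_R$ itself; a secondary technical point is keeping the interpolated Sobolev--Poincaré constant $\tilde C$ sufficiently explicit to match the precise form of $\varepsilon_1$ recorded in \eqref{hp}.
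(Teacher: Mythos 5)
Your proposal follows the same overall strategy as the paper: the paper's proof of Theorem \ref{teo71W} consists precisely in repeating the proof of Theorem \ref{teo71}, replacing Lemma \ref{lemma71} by its weighted analogue (Lemma \ref{lemma71W}) and Proposition \ref{prop43} by Proposition \ref{prop62}; the substance is the ball approximation, the $L^q_\rho$ monotonicity under smallness of $\|u_0\|_{L^{pN/2}_\rho}$, the Aronson--B\'enilan inequality $-\Delta u^m\le \rho u^p+\tfrac{\rho}{(m-1)t}u$ fed into the Stampacchia-type elliptic estimate of Proposition \ref{prop1} with exponent $r>N/2$ (which is where $s$ and the prefactor $\left(\tfrac{s}{s-1}\right)^{1/m}$ come from --- your ``Moser--Gagliardo--Nirenberg interpolation'' phrasing is vaguer than what is actually needed, though the factors you attribute to each ingredient are correct), and a passage to the limit. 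Within this frame you deviate in two harmless ways. First, your treatment of the reaction term is a genuine, arguably cleaner variant: you apply H\"older directly with exponents $\tfrac{pN}{2(p-m)}$ and its conjugate and absorb the full factor $\|\nabla u^{(m+q-1)/2}\|_{L^2}^2$ via an interpolated Poincar\'e--Sobolev bound, whereas the paper (see \eqref{eq73b}--\eqref{eq75}) splits the gradient term as $c_1+c_2$, interpolates $L^{p+q-1}$ between $L^{m+q-1}$ and $L^{p+m+q-1}$, and fixes $\alpha=m/p$; both yield the same absorption mechanism, though your threshold carries the exponent $\tfrac1{p-m}$ rather than the one recorded in \eqref{hp}, which is immaterial since the theorem only asserts smallness. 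Second, you invoke Aubin--Lions compactness along a subsequence, while the paper arranges the approximations to be monotone in all three parameters and passes to the limit by Beppo Levi; on an unbounded domain the latter is simpler (no global compact embedding is available), but local compactness plus a diagonal argument would rescue your route.

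The one point that genuinely needs repair is the omission of the truncation $T_k(u^p)$ of the source in the approximating problems \eqref{eq61}/\eqref{eq31}. With $p>m$, an untruncated superlinear source on a ball is not covered by ``standard parabolic theory'' as far as \emph{global} solvability is concerned: finite-time blow-up is possible in principle, and excluding it by means of your own a priori $L^q_\rho$ bounds is circular at the existence stage, since those bounds are derived for a solution assumed to exist on $(0,+\infty)$. The truncation makes the nonlinearity bounded and Lipschitz, which gives global existence for the approximations, and it also produces the monotonicity in $k$ on which the paper's limit procedure rests; it is then removed by monotone convergence using $T_k(u^p)\le u^p$. Your argument should either incorporate this truncation or replace it by an explicit continuation argument showing that, for data below the threshold, the $L^{pN/2}_\rho$ norm can never reach the absorption barrier on the maximal existence interval.
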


A quantitative form of the smallness condition on $u_0$ in the above Theorem can be given exactly as in Remark \ref{remark5}, see in particular \eqref{hp} and \eqref{hp2a}.


\section{Auxiliary results for elliptic problems}\label{elliptic}

Let $x_0,x \in M$. We denote by $r(x)=\textrm{dist}\,(x_0,x)$ the Riemannian distance between $x_0$ and $x$. Moreover, we let $B_R(x_0):=\{x\in M, \textrm{dist}\,(x_0,x)<R\}$ be the geodesics ball with centre $x_0 \in M$ and radius $R > 0$. If a reference point $x_0\in M$ is fixed, we shall simply denote by $B_R$ the ball with centre $x_0$ and radius $R$. Moreover we denote by $\mu$ the Riemannian measure on $M$.

\smallskip

For any given function $v$, we define for any $k\in\R^+$
\begin{equation}\label{31}
T_k(v):=\begin{cases} &k\quad \text{if}\,\,\, v\ge k \\ &v \quad \text{if}\,\,\, |v|< k \\ &-k\quad \text{if}\,\,\, v\le -k\end{cases}\,\,.
\end{equation}

For every $R>0$, $k>0,$ consider the problem
\begin{equation}\label{eq31}
\begin{cases}
\, u_t= \Delta u^m +\, T_k(u^p) & \text{in}\,\, B_R\times (0,+\infty) \\
u=0 &\text{in}\,\, \partial B_R\times (0,+\infty)\\
u=u_0 &\text{in}\,\, B_R\times \{0\}, \\
\end{cases}
\end{equation}
where $u_0\in L^\infty(B_R)$, $u_0\geq 0$.
Solutions to problem \eqref{eq31} are meant in the weak sense as follows.

\begin{definition}\label{def31}
Let $m>1$ and $p>m$. Let $u_0\in L^\infty(B_R)$, $u_0\geq 0$. We say that a nonnegative function $u$ is a solution to problem \eqref{eq31} if
$$
u\in L^{\infty}(B_R\times(0,+\infty)), \,\,\, u^m\in L^2\big((0, T); H^1_0(B_R)\big) \quad\quad \textrm{ for any }\, T>0,
$$
and for any $T>0$, $\varphi \in C_c^{\infty}(B_R\times[0,T])$ such that $\varphi(x,T)=0$ for every $x\in B_R$, $u$ satisfies the equality:
\begin{equation*}
\begin{aligned}
-\int_0^T\int_{B_R} \,u\,\varphi_t\,d\mu\,dt =&- \int_0^T\int_{B_R} \langle \nabla u^m, \nabla \varphi \rangle \,d\mu\,dt\,+ \int_0^T\int_{B_R} \,T_k(u^p)\,\varphi\,d\mu\,dt \\
& +\int_{B_R} \,u_0(x)\,\varphi(x,0)\,d\mu.
\end{aligned}
\end{equation*}
\end{definition}
We also consider elliptic problems of the type
\begin{equation}\label{eq32}
\begin{cases}
-\Delta u &= f \quad \textrm{ in }\,\, B_R\\
\;\quad u & = 0 \quad \textrm{ in }\,\, \partial B_R\,,
\end{cases}
\end{equation}
where $f\in L^q(B_R)$ for some $q>1$.

\begin{definition}\label{def32}
We say that $u\in H^1_0(B_R)$, $u\geq 0$ is a weak subsolution to problem \eqref{eq32} if
\[\int_{B_R}\langle \nabla u, \nabla \varphi \rangle\, d\mu \leq \int_{B_R} f\varphi\, d\mu,\]
for any $\varphi\in H^1_0(B_R), \varphi\geq 0$\,.
\end{definition}

In the next lemma we recall \cite[Lemma 3.6]{GMeP}, which will be used later.

\begin{lemma}\label{lemma1}
Let $v\in L^1(B_R)$. Let $\overline k>0$. Suppose that there exist $C>0$ and $s>1$ such that
\begin{equation*}
g(k)\le C\mu(A_k)^{s} \quad \text{for any}\,\,k\ge \bar k.
\end{equation*}
Then $v\in L^{\infty}(B_R)$ and
\begin{equation*}
\|v\|_{L^{\infty}(B_R)}\le\frac{s}{s-1} C^{\frac{1}{s}}\|v\|_{L^{1}(B_R)}^{1-\frac{1}{s}}+\bar k.
\end{equation*}
\end{lemma}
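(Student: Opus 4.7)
The plan is to prove this by a standard Stampacchia-type differential inequality on the distribution function. Following the notation of \cite{GMeP}, I take $A_k:=\{x\in B_R : |v(x)|>k\}$ and $g(k):=\int_{A_k}(|v|-k)\,d\mu$. By Cavalieri's identity $g(k)=\int_k^{+\infty}\mu(A_t)\,dt$, valid because $v\in L^1(B_R)$, so $g$ is nonnegative, nonincreasing and locally absolutely continuous on $[\bar k,+\infty)$, with $g'(k)=-\mu(A_k)$ for a.e.\ $k$.

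Inserting this into the standing hypothesis $g(k)\le C\mu(A_k)^s$ gives the first-order ODE inequality
\[
-g'(k)\,g(k)^{-1/s}\ge C^{-1/s}\qquad\text{for a.e.\ } k\ge \bar k \text{ with } g(k)>0,
\]
which I would rewrite as $-\frac{s}{s-1}\,\frac{d}{dk}\bigl[g(k)^{(s-1)/s}\bigr]\ge C^{-1/s}$ and integrate from $\bar k$ to $k$ to obtain
\[
g(k)^{(s-1)/s}\le g(\bar k)^{(s-1)/s}-\frac{s-1}{s}\,C^{-1/s}(k-\bar k).
\]
Since the left-hand side is nonnegative, $g(k)$ must vanish as soon as the right-hand side reaches zero, i.e.\ whenever $k\ge\bar k+\frac{s}{s-1}\,C^{1/s}\,g(\bar k)^{(s-1)/s}$. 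For such $k$, $\mu(A_k)=0$, so $\|v\|_{L^\infty(B_R)}\le k$. Combined with the trivial bound $g(\bar k)=\int_{|v|>\bar k}(|v|-\bar k)\,d\mu\le\|v\|_{L^1(B_R)}$, this yields exactly
\[
\|v\|_{L^\infty(B_R)}\le \bar k+\frac{s}{s-1}\,C^{1/s}\,\|v\|_{L^1(B_R)}^{1-1/s}.
\]

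The only delicate point is the passage from the algebraic hypothesis to the differential inequality: it requires the absolute continuity of $g$ and the identity $g'=-\mu(A_\cdot)$ a.e., both of which follow from Fubini together with $v\in L^1(B_R)$. A purely discrete iteration in the spirit of the classical Stampacchia lemma (setting $k_n=\bar k+d(1-2^{-n})$ and showing $g(k_n)\to 0$ geometrically) would also yield a vanishing-level-set conclusion, but it produces a constant of order $2^{s/(s-1)}$ rather than the optimal $\frac{s}{s-1}$; so the continuous differential-inequality approach is essential to recover the stated coefficient.
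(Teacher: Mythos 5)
Your proof is correct, and it is the standard argument for this statement: the paper itself does not prove the lemma but merely recalls it from \cite{GMeP}, and the form of the constant $\frac{s}{s-1}C^{1/s}$ makes clear that the intended proof is exactly the continuous Cavalieri/ODE argument you give, namely $g(k)=\int_k^{+\infty}\mu(A_t)\,dt$, $g'=-\mu(A_\cdot)$ a.e., and integration of $-\frac{d}{dk}\bigl[g^{(s-1)/s}\bigr]\ge \frac{s-1}{s}C^{-1/s}$ until $g$ vanishes. Your identification of $g(k)=\int_{A_k}(|v|-k)\,d\mu$ is consistent with how the lemma is invoked in the proof of Proposition \ref{prop1} (there $g(k)=\int_{B_R}|G_k(v)|\,d\mu$ with $G_k(v)=v-T_k(v)$), and your handling of the only delicate point — the a.e.\ differentiability and the inequality direction for the monotone composition $g^{(s-1)/s}$ on intervals where $g>0$ — is sound.
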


The following proposition contains an estimate in the spirit of the $L^\infty$ one of Stampacchia (see, e.g., \cite{KS}, \cite{BC} and references therein) in the ball $B_R$; however, some differences are in order. In fact, we aim at obtaining an estimate independent of the radius $R$ (see Remark \ref{remark2}). Since the volume of $M$ is infinite, the classical estimate of Stampacchia cannot be directly applied.

\begin{proposition}\label{prop1}
Let $f\in L^{m}(B_R)$ where $m>\frac N 2$.  Assume that $v\in H_0^1(B_R)$, $v\ge 0$ is a subsolution to problem
\begin{equation}\label{a1}
\begin{cases}
-\Delta v = f & \text{in}\,\, B_R\\
v=0 &\text{on}\,\, \partial B_R
\end{cases}.
\end{equation}
in the sense of Definition \ref{def32}. Then
\begin{equation}\label{eqa3}
\|v\|_{L^{\infty}(B_R)}\le\frac{s}{s-1}\left(\frac{1}{C_s}\right)^{\frac 2s}\|f\|_{L^m(B_R)}^{\frac 1s} \|v\|_{L^{1}(B_R)}^{\frac{s-1}{s}},
\end{equation}
\newline where
\begin{equation}\label{eq35}
s=1+\frac{2}{N}-\frac{1}{m}\,,
\end{equation}
\end{proposition}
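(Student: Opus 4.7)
The plan is to run a Stampacchia-style truncation argument that terminates with the decay estimate $g(k)\le C\mu(A_k)^s$ required by Lemma~\ref{lemma1}. The novelty with respect to the classical scheme in $\mathbb{R}^N$ is that, because $\mu(B_R)$ is not being controlled uniformly in $R$ (we want a bound independent of $R$, as stressed in Remark~\ref{remark2}), the Sobolev inequality \eqref{S} has to be used twice, once to cancel a gradient factor produced by Hölder and once to return to a Lebesgue norm of $(v-k)_+$.

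Fix $k\ge 0$ and set $A_k:=\{x\in B_R: v(x)>k\}$. Since $v\in H^1_0(B_R)$ with $v\ge 0$, the truncation $(v-k)_+$ is an admissible nonnegative test function in Definition~\ref{def32}, and yields
\[
\int_{B_R}|\nabla (v-k)_+|^2\,d\mu \;\le\; \int_{A_k} f\,(v-k)_+\,d\mu.
\]
I would estimate the right hand side by a three-factor Hölder inequality with exponents $m$, $2^*$ and $\beta^{-1}$, where
\[
\beta:=1-\tfrac{1}{m}-\tfrac{1}{2^*}=\tfrac{1}{2}+\tfrac{1}{N}-\tfrac{1}{m};
\]
the hypothesis $m>N/2$ guarantees that $\beta>0$ (and is the source of the condition $s>1$ below). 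Combined with one application of \eqref{S} to $(v-k)_+\in H^1_0(B_R)$, this gives
\[
\|\nabla(v-k)_+\|_{L^2(B_R)}^2 \;\le\; \tfrac{1}{C_s}\,\|f\|_{L^m(B_R)}\,\mu(A_k)^{\beta}\,\|\nabla(v-k)_+\|_{L^2(B_R)}.
\]
Dividing by $\|\nabla(v-k)_+\|_{L^2}$ and applying \eqref{S} a second time yields
\[
\|(v-k)_+\|_{L^{2^*}(B_R)} \;\le\; \tfrac{1}{C_s^{\,2}}\,\|f\|_{L^m(B_R)}\,\mu(A_k)^{\beta}.
\]

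Now set $g(k):=\int_{A_k}(v-k)\,d\mu$. A final Hölder inequality with exponents $2^*$ and $(2^*)'=\tfrac{2N}{N+2}$ gives $g(k)\le\|(v-k)_+\|_{L^{2^*}}\,\mu(A_k)^{1-1/2^*}=\|(v-k)_+\|_{L^{2^*}}\,\mu(A_k)^{(N+2)/(2N)}$. The key algebraic check is the exponent identity
\[
\beta+\tfrac{N+2}{2N}=\tfrac{1}{2}+\tfrac{1}{N}-\tfrac{1}{m}+\tfrac{1}{2}+\tfrac{1}{N}=1+\tfrac{2}{N}-\tfrac{1}{m}=s,
\]
which together with the previous display gives $g(k)\le C_s^{-2}\,\|f\|_{L^m(B_R)}\,\mu(A_k)^s$ for every $k\ge 0$. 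Since $s>1$, I would then apply Lemma~\ref{lemma1} with $C:=C_s^{-2}\|f\|_{L^m(B_R)}$ and arbitrary $\bar k>0$, letting $\bar k\to 0^+$ to obtain \eqref{eqa3}. The main obstacle is not a conceptual one but rather the bookkeeping in the Hölder/Sobolev chain that matches the exponent $\beta+(N+2)/(2N)$ to $s$; everything else is a routine manipulation.
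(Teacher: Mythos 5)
Your argument is correct and is essentially the paper's own proof: the authors also test the weak formulation with $G_k(v)=(v-k)_+$, arrive at the decay estimate $\int_{B_R}|G_k(v)|\,d\mu\le C_s^{-2}\|f\|_{L^m(B_R)}\mu(A_k)^{s}$ (their \eqref{36}, with the exponent $\tfrac{N+2}{N}-\tfrac1m=s$), and then invoke Lemma \ref{lemma1} and let $\bar k\to0$. The only difference is that you write out explicitly the three-factor H\"older estimate and the double application of the Sobolev inequality, which the paper delegates to a citation of \cite[Proposition 3.3]{GMeP}; your exponent bookkeeping checks out.
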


\begin{remark}\label{remark2}
\rm If in Proposition \ref{prop1} we further assume that there exists a constant $k_0>0$ such that
\[\max\left\{\|v \|_{L^1(B_R)}, \|f \|_{L^{m}(B_R)}\right\} \leq k_0
\quad \textrm{ for all }\,\, R>0, \]
then from \eqref{eqa3}, we infer that the bound from above on $\|v\|_{L^{\infty}(B_R)}$ is independent of $R$. This fact will
have a key role in the proof of global existence for problem \eqref{problema}.
\end{remark}

\begin{proof}[Proof of Proposition \ref{prop1}]
We define
$$G_k(v):=v-T_k(v)$$ where $T_k(v)$ has been defined in \eqref{31} and $$A_k:=\{x\in B_R\,:\,|v(x)|> k\}.$$ Since $G_k(v)\in H^1_0(B_R)$ and $G_k(v)\geq 0$, we can take $G_k(v)$ as test function in problem \eqref{a1}. Arguing as in the proof of \cite[Proposition 3.3]{GMeP} we obtain
\begin{equation}\label{36}
\int_{B_R}|G_k(v)|\,d\mu\le\frac{1}{C_s^2}\|f\|_{L^{m}(B_R)}\mu(A_k)^{\frac{N+2}{N}-\frac 1m}.
\end{equation}
By \eqref{eq35}, setting
$$
C=\frac{1}{C_s^2}\|f\|_{L^m(B_R)},
$$
we rewrite \eqref{36} as
$$
\int_{B_R}|G_k(v)|\,d\mu\le\, C\mu(A_k)^s.
$$
Hence we can apply Lemma \ref{lemma1} to $v$ and we obtain
$$
\|v\|_{L^{\infty}(B_R)} \le C^{\frac 1s} \frac{s}{s-1} \|v\|_{L^1(B_R)}^{\frac{s-1}{s}}+\overline k.
$$
Taking the limit as $\overline k \longrightarrow 0$ and we get the thesis.

\end{proof}

We shall use the following Aronson-Benilan type estimate (see \cite{AB}; see also \cite[Proposition 2.3]{Sacks}).
\begin{proposition}\label{prop43}
Let $m>1$, $p>m$, $u_0\in H_0^1(B_R) \cap L^{\infty}(B_R)$, $u_0\ge 0$. Let $u$ be the solution to problem \eqref{eq31}. Then, for a.e. $t\in(0,T)$,
\begin{equation*}
-\Delta u^m(\cdot,t) \le u^p(\cdot, t)+\frac{1}{(m-1)t} u(\cdot,t) \quad \text{in}\,\,\,\mathfrak{D}'(B_R).
\end{equation*}
\end{proposition}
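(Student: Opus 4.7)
The plan is to reduce the desired distributional inequality to a classical Aronson--B\'enilan lower bound on the time derivative, and then to prove the latter for the truncated reaction--diffusion problem by a maximum-principle argument applied to a smooth approximation.

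\smallskip
\emph{Reduction.} From the equation, $-\Delta u^m=T_k(u^p)-\partial_t u$ in $\mathfrak D'(B_R)$. Combined with the pointwise bound $T_k(u^p)\le u^p$, this reduces the claim to the Aronson--B\'enilan-type estimate
\[\partial_t u\ge -\frac{u}{(m-1)t}\qquad\text{in }\mathfrak D'(B_R),\ \text{for a.e. }t\in(0,T),\]
or, equivalently, to the nondecrease of $t\mapsto t^{1/(m-1)}u(x,t)$ along the flow.

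\smallskip
\emph{Approximation and the AB quantity.} I would first regularize: replace $u_0$ by smooth, strictly positive data $u_0^\delta$, $s^m$ by a uniformly parabolic $\varphi_\delta(s)$, and $T_k$ by a smooth variant, obtaining via standard quasilinear theory a classical positive solution $u_\delta$. Setting $P_\delta:=\partial_t u_\delta$ and
\[Q_\delta:=(m-1)t\,P_\delta+u_\delta,\]
differentiation of the regularized equation in $t$ followed by substitution of the equation itself to eliminate $\Delta u_\delta^m$ yields
\[\partial_t Q_\delta=\Delta\bigl(m u_\delta^{m-1}Q_\delta\bigr)+p\,T_k'(u_\delta^p)\,u_\delta^{p-1}Q_\delta+F(u_\delta),\]
where $F(u):=m T_k(u^p)-p u^p T_k'(u^p)$. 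Since $Q_\delta(\cdot,0)=u_0^\delta\ge 0$ and $Q_\delta\equiv 0$ on $\partial B_R$ (because both $u_\delta$ and $\partial_t u_\delta$ vanish there), the parabolic maximum principle applied to $Q_\delta$ yields $Q_\delta\ge 0$, i.e.\ $\partial_t u_\delta\ge -u_\delta/((m-1)t)$. Passing to the limit $\delta\to 0$ by standard stability for degenerate parabolic equations transfers the inequality to $u$ in the distributional sense.

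\smallskip
\emph{Main obstacle.} The delicate step is the maximum principle, because $F$ is not uniformly signed: $F=(m-p)u^p\le 0$ on $\{u^p<k\}$ while $F=mk\ge 0$ on $\{u^p>k\}$. In the region where $F<0$ one has $T_k'(u^p)=1$, so the zero-order part of the equation for $Q_\delta$ may be grouped as
\[p u_\delta^{p-1}\Bigl(Q_\delta-\tfrac{p-m}{p}u_\delta\Bigr),\]
and a careful argument exploiting the $L^\infty$ bound on $u_\delta$ together with the nonnegative parabolic boundary data for $Q_\delta$ is needed to rule out a negative interior minimum. Alternatively, one can invoke \cite[Proposition~2.3]{Sacks} directly, where exactly this type of degenerate parabolic problem with a bounded Lipschitz reaction term is treated and the Aronson--B\'enilan estimate derived.
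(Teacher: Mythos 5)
Your strategy coincides with the paper's: reduce, via the equation and $T_k(u^p)\le u^p$, to the Aronson--B\'enilan bound $u_t\ge -u/((m-1)t)$; introduce the quantity $z=t\,u_t+\tfrac{u}{m-1}$ (your $Q=(m-1)z$); derive a linear parabolic equation for it by differentiating in $t$ and substituting back the equation; and conclude by the comparison principle in the spirit of Sacks. Your computation is correct, and in particular the zero-order forcing $F(u)=mT_k(u^p)-p\,u^pT_k'(u^p)$, equal to $(m-p)u^p\le 0$ on $\{u^p<k\}$, is exactly the quantity that governs the argument: the classical Aronson--B\'enilan proof with a source $f(u)$ requires $mf(s)-sf'(s)\ge 0$ (i.e.\ $s\mapsto f(s)/s^m$ nonincreasing), which holds for $f(s)=s^p$ with $p\le m$ and fails for $p>m$, truncated or not.

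The gap is that you stop precisely there. The difficulty is not merely that ``care is needed'': at a first interior point where $Q_\delta$ touches zero one has $\nabla Q_\delta=0$, hence $\Delta\bigl(mu_\delta^{m-1}Q_\delta\bigr)=mu_\delta^{m-1}\Delta Q_\delta\ge 0$, and the equation gives $0\ge \partial_t Q_\delta-\Delta\bigl(mu_\delta^{m-1}Q_\delta\bigr)-pT_k'(u_\delta^p)u_\delta^{p-1}Q_\delta=F(u_\delta)$, which is perfectly consistent with $F\le 0$; so the touching-point argument produces no contradiction and genuinely fails rather than merely being delicate. Your regrouping $pu^{p-1}\bigl(Q-\tfrac{p-m}{p}u\bigr)$ does not rescue it, since at such a point $Q=0<\tfrac{p-m}{p}u$. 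Nor can one ``invoke Sacks directly'': his Proposition 2.3 is proved under the sign condition that is violated here, which is exactly why the paper says modifications are needed ``due to the fact that we have $p>m$''. In fairness, you have located the precise step that the paper itself compresses into the unexplained assertion that ``by direct computation, $z_t-Lz\ge 0$''; but as written your proposal, like that one-line claim, does not close the argument, and a genuinely different idea (or a correct quantitative handling of the negative forcing term) is still required at this point.
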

\begin{proof}
The conclusion follows by minor modifications of the proof of \cite[Proposition 2.3]{Sacks} (where $p<m)$, due to the fact that we have $p>m$. We define
$$
z=u_{t}+\frac{u}{m-1}\,
$$
and the operator
$$
Lz=\Delta \left(mu^{m-1}z\right)+mu^{p-1}z\,,
$$
where $u$ is the solution to problem \eqref{eq31}. Observe that
$$
\begin{aligned}
&z(x,0)\ge 0 \quad \text{for }\,\,\, x\in B_R\,,\\
&z(x,t)\ge 0 \quad \text{for }\,\,\, x\in \partial B_R\,\,\,\text{and }\,\,\,t\in (0, T)\,.
\end{aligned}
$$
Moreover, by direct computation, we get
$$
z_t-Lz\ge 0\quad \text{in}\,\,\, B_R\times(0, T).
$$
Thus, arguing as in \cite[Proposition 2.3]{Sacks}, thanks to the comparison principle, we get, for a.e. $t\in (0,T)$,
\[
-\Delta u^m(\cdot,t) \le T_k[u^p(\cdot, t)]+\frac{1}{(m-1)t} u(\cdot,t) \leq u^p(\cdot, t)+\frac{1}{(m-1)t} u(\cdot,t) \quad \text{in}\,\,\,\mathfrak{D}'(B_R),
\]
where we have used that $T_k(u^p)\leq u^p\,.$
\end{proof}

\section{$L^q$ and smoothing estimates for $p>m+\frac 2N$}\label{Lp}

\begin{lemma}\label{lemma41}
Let $m>1, p>m+\frac{2}{N}$.
Assume that inequality \eqref{S} holds. Suppose that $u_0\in L^{\infty}(B_R)$, $u_0\ge0$. Let $1<q<\infty$, $p_0$ as in \eqref{p0} and assume that
\begin{equation}\label{eq41}
\|u_0\|_{\textrm L^{p_0}(B_R)}\,<\,\bar\varepsilon
\end{equation}
with $\bar\varepsilon=\bar\varepsilon(p, m, q, C_s)$ sufficiently small.
Let $u$ be the solution of problem \eqref{eq31} in the sense of Definition \ref{def31}, such that in addition $u\in C([0,T), L^q(B_R))\, \text{for any} \ q\in(1,+\infty),\,\textrm{ for any }\, T>0$. Then
\begin{equation}\label{eq42}
\|u(t)\|_{L^q(B_R)} \le \|u_0\|_{L^q(B_R)}\quad \textrm{ for all }\,\, t>0\,.
\end{equation}
\end{lemma}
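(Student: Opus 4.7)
The plan is to derive a differential inequality for $\|u(t)\|_{L^q(B_R)}^q$ by a standard energy method and then to close it via a bootstrap that exploits the smallness of the critical norm $\|u(t)\|_{L^{p_0}(B_R)}$.

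First, I would multiply the equation in \eqref{eq31} by $u^{q-1}$ and integrate over $B_R$; the regularity $u\in C([0,T),L^q(B_R))$ together with $u^m\in L^2((0,T);H^1_0(B_R))$ justifies this after a routine approximation. Integration by parts in the diffusion term, combined with the identity $\nabla u^{q-1}\cdot\nabla u^m = \frac{4m(q-1)}{(m+q-1)^2}\bigl|\nabla u^{(m+q-1)/2}\bigr|^2$ and the pointwise bound $T_k(u^p)\le u^p$, yields
\[
\frac{1}{q}\frac{d}{dt}\|u(t)\|_{L^q(B_R)}^q \le -\frac{4m(q-1)}{(m+q-1)^2}\int_{B_R}\bigl|\nabla u^{(m+q-1)/2}\bigr|^2\,d\mu + \int_{B_R} u^{p+q-1}\,d\mu.
\]
To control the reaction term I would split $u^{p+q-1}=u^{p-m}\cdot u^{m+q-1}$ and apply Hölder with conjugate exponents $N/2$ and $N/(N-2)$, tailored so that the second factor sits in $L^{2^*}$:
\[
\int_{B_R} u^{p+q-1}\,d\mu \le \|u(t)\|_{L^{p_0}(B_R)}^{p-m}\,\bigl\|u^{(m+q-1)/2}\bigr\|_{L^{2^*}(B_R)}^{2}.
\]
Applying the Sobolev inequality \eqref{S} to $v=u^{(m+q-1)/2}\in H^1_0(B_R)$ bounds the last factor by $C_s^{-2}\|\nabla u^{(m+q-1)/2}\|_{L^2(B_R)}^2$, so substituting back produces the master inequality
\[
\frac{1}{q}\frac{d}{dt}\|u(t)\|_{L^q(B_R)}^q \le \left[-\frac{4m(q-1)}{(m+q-1)^2}+\frac{1}{C_s^{2}}\|u(t)\|_{L^{p_0}(B_R)}^{p-m}\right]\int_{B_R}\bigl|\nabla u^{(m+q-1)/2}\bigr|^2\,d\mu.
\]

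I would then close the estimate in two stages. In stage one, apply the master inequality with $q=p_0$: by the choice of $\bar\varepsilon$ (cf.\ Remark \ref{remark4}), the quantity $\bar\varepsilon^{p-m}$ is strictly below the threshold controlling the bracket at $q=p_0$. A standard continuity/continuation argument, relying on $t\mapsto\|u(t)\|_{L^{p_0}(B_R)}$ being continuous and on the strict initial inequality $\|u_0\|_{L^{p_0}(B_R)}<\bar\varepsilon$, then shows that the $L^{p_0}$ norm stays below $\bar\varepsilon$ for every $t>0$ and is in fact nonincreasing. In stage two, with $\|u(t)\|_{L^{p_0}(B_R)}$ now under uniform control, I reapply the master inequality for the target exponent $q$: enforcing also $\bar\varepsilon^{p-m}\le\frac{4m(q-1)}{(m+q-1)^2}C_s^{2}$ (which is precisely the role of the minimum in the definition of $\bar\varepsilon$) makes the bracket nonpositive for all $t>0$, yielding $\frac{d}{dt}\|u(t)\|_{L^q(B_R)}^q\le 0$ and hence \eqref{eq42}.

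The main obstacle is this bootstrap closure: the energy inequality is only useful conditional on the smallness of $\|u(t)\|_{L^{p_0}(B_R)}$, yet that smallness is exactly what we wish to propagate in time. It is the continuity in $t$ of the $L^{p_0}$ norm, together with the \emph{strict} initial inequality, that prevents a first exit time from occurring and allows the control to extend indefinitely.
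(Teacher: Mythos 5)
Your proposal is correct and follows essentially the same route as the paper: the same $u^{q-1}$ energy estimate, the same Hölder splitting $u^{p+q-1}=u^{p-m}u^{m+q-1}$ with exponents $N/2$ and $N/(N-2)$, the same Sobolev absorption leading to the threshold $\frac{4m(q-1)}{(m+q-1)^2}C_s^2$ on $\|u(t)\|_{L^{p_0}}^{p-m}$, and the same continuity/bootstrap in which the case $q=p_0$ propagates the smallness forward in time. The only (immaterial) difference is that you apply Sobolev to the reaction term while the paper applies it to the diffusion term; the resulting bracket condition is identical.
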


\begin{proof}
Since $u_0$ is bounded and $T_k$ is a bounded and Lipschitz function, by standard results, there exists a unique solution of problem \eqref{eq31} in the sense of Definition \ref{def31}. We now multiply both sides of the differential equation in problem \eqref{eq31} by $u^{q-1}$,
$$
\int_{B_R} \,u_t\,u^{q-1}\,d\mu =\int_{B_R}  \Delta( u^m)\,u^{q-1} \,d\mu\,+ \int_{B_R}  T_k(u^p)\,u^{q-1}\,d\mu \,.
$$
Now, formally integrating by parts in $B_R$. This can be justified by standard tools, by an approximation procedure. We get
\begin{equation}\label{eq43}
\frac{1}{q}\frac{d}{dt}\int_{B_R} u^{q}\,d\mu =-m(q-1)\int_{B_R}  u^{m+q-3}\,|\nabla u|^2 \,d\mu\,+ \int_{B_R}  T_k(u^p)\,u^{q-1}\,d\mu \,.
\end{equation}
Observe that, thanks to Sobolev inequality \eqref{S}, we have
\begin{equation}\label{eq43b}
\begin{aligned}
\int_{B_R}  u^{m+q-3}\,|\nabla u|^2 \,d\mu&= \frac{4}{(m+q-1)^2} \int_{B_R}\left |\nabla \left(u^{\frac{m+q-1}{2}}\right)\right|^2 \,d\mu\\
&\ge \frac{4}{(m+q-1)^2} C_s^2\left( \int_{B_R}  u^{\frac{m+q-1}{2}\frac{2N}{N-2}}\,d\mu \right)^{\frac{N-2}{N}}
\end{aligned}
\end{equation}
Moreover, the last term in the right hand side of \eqref{eq43}, thanks to H{\"o}lder inequality with exponents $\frac{N}{N-2}$ and $\frac{N}{2}$, becomes
\begin{equation}\label{eq43c}
\begin{aligned}
\int_{B_R}  T_k(u^p)\,u^{q-1}\,d\mu&\le \int_{B_R}  u^p\,u^{q-1}\,d\mu = \int_{B_R}  u^{p-m}\,u^{m+q-1}\,d\mu \\
&\le \|u(t)\|^{p-m}_{L^{(p-m)\frac{N}{2}}(B_R)} \|u(t)\|^{m+q-1}_{L^{(m+q-1)\frac{N}{N-2}}(B_R)}
\end{aligned}
\end{equation}
Combining \eqref{eq43b} and \eqref{eq43c} we get
\begin{equation}\label{eq44}
\frac{1}{q}\frac{d}{dt} \|u(t)\|^q_{L^q(B_R)}\le -\left[\frac{4\,m(q-1)}{(m+q-1)^2} C_s^2-\|u(t)\|^{p-m}_{L^{p_0}(B_R)}\right] \|u(t)\|^{m+q-1}_{L^{(m+q-1)\frac{N}{N-2}}(B_R)}\,.
\end{equation}
Take any $T>0$. Observe that, thanks to hypothesis \eqref{eq41} and the known continuity of  the map $t\mapsto u(t)$ in $[0, T]$, there exists $t_0>0$ such that
$$
\|u(t)\|_{L^{p_0}(B_R)}\le 2\, \bar\varepsilon\,\,\,\,\,\text{for any}\,\,\,\, t\in [0,t_0]\,.
$$
Hence \eqref{eq44} becomes, for any $t\in (0,t_0]$,
$$
\frac{1}{q}\frac{d}{dt} \|u(t)\|^q_{L^q(B_R)}\le -\left[\frac{4\,m(q-1)}{(m+q-1)^2} C_s^2-2\,\bar\varepsilon^{p-m} \right] \|u(t)\|^{m+q-1}_{L^{(m+q-1)\frac{N}{N-2}}(B_R)}\,\le 0\,,
$$
where the last inequality is obtained thanks to \eqref{eq41}. We have proved that $t\mapsto \|u(t)\|_{L^q(B_R)}$ is decreasing in time for any $t\in (0,t_0]$, i.e.
\begin{equation}\label{eq45}
\|u(t)\|_{L^q(B_R)}\le \|u_0\|_{L^q(B_R)}\quad \text{for any} \,\,\,t\in (0,t_0]\,.
\end{equation}
In particular, inequality \eqref{eq45} follows for the choice $q=p_0$, in view of hypothesis \eqref{eq41}. Hence we have
$$
\|u(t)\|_{L^{p_0}(B_R)}\le \|u_0\|_{L^{p_0}(B_R)}\,<\,\bar\varepsilon \quad \text{for any} \,\,\,\,t\in (0,t_0]\,.
$$
Now, we can repeat the same argument in the time interval $(t_0, t_1]$, where $t_1$ is chosen, due to the continuity of $u$, in such a way that
$$
\|u(t)\|_{L^{p_0}(B_R)}\le 2\bar\varepsilon\,\,\,\,\,\text{for any}\,\,\, t\in (t_0,t_1]\,.
$$
Thus we get
\begin{equation*}
\|u(t)\|_{L^q(B_R)}\le \|u_0\|_{L^q(B_R)}\quad \text{for any} \,\,\,t\in (0,t_1]\,.
\end{equation*}
Iterating this procedure we obtain that $t\mapsto \|u(t)\|_{L^q(B_R)}$ is decreasing in $[0, T]$. Since $T>0$ was arbitrary, the thesis follows.
\end{proof}
Using a Moser type iteration procedure we prove the following result:
\begin{proposition}\label{prop42}
Let $m>1,\, p>m+\frac{2}{N}$.
Assume that inequality \eqref{S} holds. Suppose that $u_0\in L^{\infty}(B_R)$, $u_0\ge0$. Let $u$ be the solution of problem \eqref{eq31} in the sense of Definition \ref{def31}. Let $1< q_0\le q<+\infty$ and assume that
\begin{equation}\label{epsilon0a}
\|u_0\|_{L^{p_0}(B_R)}<\tilde\varepsilon_0
\end{equation}
for $\tilde\varepsilon_0=\tilde\varepsilon_0(p, m, N, C_s, q, q_0)$ sufficiently small. Then there exists $C(m,q_0,C_s, \tilde\varepsilon_0, N, q)>0$ such that
\begin{equation*}
\|u(t)\|_{L^q(B_R)} \le C\,t^{-\gamma_q}\|u_0\|^{\delta_q}_{L^{q_0}(B_R)}\quad \textrm{ for all }\,\, t>0\,,
\end{equation*}
where
\begin{equation}\label{eq47}
\gamma_q=\left(\frac{1}{q_0}-\frac{1}{q}\right)\frac{N\,q_0}{2\,q_0+N(m-1)}\,,\quad \delta_q=\frac{q_0}{q}\left(\frac{q+\frac{N}{2}(m-1)}{q_0+\frac{N}{2}(m-1)}\right)\,.
\end{equation}
\end{proposition}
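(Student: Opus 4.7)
The plan is to prove the smoothing estimate by a Moser iteration along the sequence $\{q_n\}$ introduced in Remark \ref{remark4}, starting from $q_0$. Repeating the energy computation leading to \eqref{eq44} in the proof of Lemma \ref{lemma41}, this time with $q=q_n$ and using the identity $q_{n+1}=\tfrac{N}{N-2}(m+q_n-1)$, one finds
\begin{equation*}
\frac{1}{q_n}\frac{d}{dt}\|u(t)\|_{L^{q_n}(B_R)}^{q_n}\le -\Bigl[\frac{4m(q_n-1)}{(m+q_n-1)^2}C_s^2-\|u(t)\|_{L^{p_0}(B_R)}^{p-m}\Bigr]\|u(t)\|_{L^{q_{n+1}}(B_R)}^{m+q_n-1}.
\end{equation*}
With $\tilde\varepsilon_0$ chosen as in \eqref{eq20}, Lemma \ref{lemma41} applied with $q=p_0$ gives $\|u(t)\|_{L^{p_0}}<\tilde\varepsilon_0$ for every $t>0$, and the very definition of $\tilde\varepsilon_0$ then makes the bracket above bounded below by $\tfrac{2m(q_n-1)}{(m+q_n-1)^2}C_s^2$ for every $n=0,\dots,\bar n$. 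This reduces the inequality to
\begin{equation*}
\frac{d}{dt}\|u(t)\|_{L^{q_n}}^{q_n}\le -\tilde c_n\,\|u(t)\|_{L^{q_{n+1}}}^{m+q_n-1},\qquad \tilde c_n:=\frac{2m\,q_n(q_n-1)}{(m+q_n-1)^2}C_s^2>0.
\end{equation*}

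Next I would set $t_n:=(1-2^{-n})t$, so that $t_0=0$, $t_n\to t$ and $t_{n+1}-t_n=2^{-(n+1)}t$. Integrating the last inequality on $[t_n,t_{n+1}]$ and using that $\tau\mapsto \|u(\tau)\|_{L^{q_{n+1}}}$ is nonincreasing (Lemma \ref{lemma41} with $q=q_{n+1}$, whose smallness hypothesis is built into the choice of $\tilde\varepsilon_0$) yields
\begin{equation*}
\|u(t_{n+1})\|_{L^{q_{n+1}}}\le \Bigl(\frac{2^{n+1}}{\tilde c_n\, t}\Bigr)^{\frac{1}{m+q_n-1}}\|u(t_n)\|_{L^{q_n}}^{\alpha_n},\qquad \alpha_n:=\frac{q_n}{m+q_n-1}.
\end{equation*}
Iterating this bound from $n=0$ (with $\|u(t_0)\|_{L^{q_0}}=\|u_0\|_{L^{q_0}}$) up to $n=\bar n-1$ produces an estimate of the form $\|u(t_{\bar n})\|_{L^{q_{\bar n}}}\le \tilde C\,t^{-\gamma_{q_{\bar n}}}\|u_0\|_{L^{q_0}}^{\delta_{q_{\bar n}}}$. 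Since $t_{\bar n}\ge t/2$ and the $L^{q_{\bar n}}$-norm is nonincreasing in time, the same bound controls $\|u(t)\|_{L^{q_{\bar n}}}$ up to a numerical factor.

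The bookkeeping of exponents rests on the telescoping identity $\alpha_k\alpha_{k+1}\cdots\alpha_{n-1}=(N/(N-2))^{n-k}\,q_k/q_n$, combined with the rewriting of \eqref{eq400} as $(N/(N-2))^n=\bigl(q_n+\tfrac{N(m-1)}{2}\bigr)/\bigl(q_0+\tfrac{N(m-1)}{2}\bigr)$. A direct summation then shows that the exponent of $\|u_0\|_{L^{q_0}}$ equals $\delta_{q_{\bar n}}$ and the exponent of $t^{-1}$ equals $\gamma_{q_{\bar n}}$, as defined in \eqref{eq47}; the remaining finite products of powers of $2$ and of $\tilde c_n$ collapse into the constant $\tilde C$. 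Finally, to pass from $q_{\bar n}$ to a general $q\in[q_0,q_{\bar n}]$, I would apply the H\"older interpolation
\begin{equation*}
\|u(t)\|_{L^q}\le\|u(t)\|_{L^{q_0}}^{1-\theta}\|u(t)\|_{L^{q_{\bar n}}}^{\theta},\qquad \frac{1}{q}=\frac{1-\theta}{q_0}+\frac{\theta}{q_{\bar n}},
\end{equation*}
together with $\|u(t)\|_{L^{q_0}}\le\|u_0\|_{L^{q_0}}$ from Lemma \ref{lemma41}; a short algebraic check then verifies $\theta\gamma_{q_{\bar n}}=\gamma_q$ and $(1-\theta)+\theta\delta_{q_{\bar n}}=\delta_q$. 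The principal obstacle throughout is the exponent algebra, since once the telescoping structure has been identified, the interpolation closure follows automatically from the linearity of $\gamma_q,\delta_q$ in $1/q$.
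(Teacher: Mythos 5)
Your proposal is correct and follows essentially the same route as the paper's proof: a Moser iteration along the sequence $\{q_n\}$ over a geometric partition of $[0,t]$, using Lemma \ref{lemma41} both to keep $\|u(t)\|_{L^{p_0}}$ below $\tilde\varepsilon_0$ and to replace the time integral of the $L^{q_{n+1}}$ norm by its value at the right endpoint, followed by H\"older interpolation between $L^{q_0}$ and $L^{q_{\bar n}}$. The only cosmetic difference is your time grid $t_n=(1-2^{-n})t$ (which forces one extra use of monotonicity since $t_{\bar n}<t$) versus the paper's normalization $t_n=(2^n-1)t/(2^{\bar n}-1)$ with $t_{\bar n}=t$.
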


\begin{proof}
Let $\{q_n\}$ be the sequence defined in \eqref{eq400}. We start by proving a smoothing estimate from $q_0$ to $q_{\bar n}$ using a Moser iteration technique (see also \cite{A}). 

Let $t>0$, we define
\begin{equation}\label{eq48}
s=\frac{t}{2^{\overline n}-1} , \quad t_n=(2^n-1)s\,.
\end{equation}
Observe that $t_0=0, \quad t_{\bar n}=t,\quad \{t_n\}\,\text{ is an increasing sequence w.r.t.}\,\,n$.
 Now, for any $1\le n\le \overline n$, we multiply equation \eqref{eq31} by $u^{q_{n-1}-1}$ and integrate in $B_R\times[t_{n-1},t_{n}]$. Thus we get
$$
\int_{t_{n-1}}^{t_{n}}\int_{B_R} \,u_t\,u^{q_{n-1}-1}\,d\mu\,dt = \int_{t_{n-1}}^{t_{n}}\int_{B_R}  \Delta( u^m)\,u^{q_{n-1}-1} \,d\mu\,dt+ \int_{t_{n-1}}^{t_{n}}\int_{B_R}  T_k(u^p)\,u^{q_{n-1}-1}\,d\mu\,dt.
$$
Then we integrate by parts in $B_R\times[t_{n-1},t_{n}]$. Thanks to Sobolev inequality and hypothesis \eqref{epsilon0a} we get
\begin{equation}\label{eq48b}
\begin{aligned}
&\frac{1}{q_{n-1}}\left[\|u(\cdot, t_{n})\|^{q_{n-1}}_{L^{q_{n-1}}(B_R)}-\|u(\cdot, t_{n-1})\|^{q_{n-1}}_{L^{q_{n-1}}(B_R)}\right]\\&\le -\left[\frac{4m(q_{n-1}-1)}{(m+q_{n-1}-1)^2} C_s^2-2\tilde\varepsilon_0^{\frac1{p-m}}\right] \int_{t_{n-1}}^{t_{n}}\|u(\tau)\|^{m+q_{n-1}-1}_{L^{(m+q_{n-1}-1)\frac{N}{N-2}}(B_R)}\,d\tau,
\end{aligned}
\end{equation}
where we have used the fact that $T_k(u^p)\,\le\,u^p$.
We define $q_n$ as in \eqref{eq400}, so that $(m+q_{n-1}-1)\dfrac{N}{N-2}=q_{n}$. Hence, in view of hypothesis \eqref{epsilon0a} we can apply Lemma \ref{lemma41} to the integral on the right hand side of \eqref{eq48b}, hence we get
\begin{equation}\label{eq49}
\begin{aligned}
&\frac{1}{q_{n-1}}\left[\|u(\cdot, t_{n})\|^{q_{n-1}}_{L^{q_{n-1}}(B_R)}-\|u(\cdot, t_{n-1})\|^{q_{n-1}}_{L^{q_{n-1}}(B_R)}\right]\\&\le -\left[\frac{4m(q_{n-1}-1)}{(m+q_{n-1}-1)^2} C_s^2-2\tilde \varepsilon_0^{\frac1{p-m}}\right] \|u(\cdot,t_{n})\|^{m+q_{n-1}-1}_{L^{q_n}(B_R)}|t_{n}-t_{n-1}|.
\end{aligned}
\end{equation}
Observe that
\begin{equation}\label{eq410}
\begin{aligned}
&\left \|u(\cdot, t_{n})\right\|^{q_{n-1}}_{L^{q_{n-1}}(B_R)} \ge 0,\\
& |t_{n}-t_{n-1}|=\frac{2^{n-1}\,t}{2^{\bar n}-1}.
\end{aligned}
\end{equation}
We define
\begin{equation}\label{eq411}
d_{n-1}:=\left[\frac{4\,m\,(q_{n-1}-1)}{(m+q_{n-1}-1)^2}C_s^2-2\tilde\varepsilon_0^{\frac1{p-m}}\right]^{-1}\frac{1}{q_{n-1}}.
\end{equation}
By plugging \eqref{eq410} and \eqref{eq411} into \eqref{eq49} we get
$$
\|u(\cdot, t_{n})\|^{m+q_{n-1}-1}_{L^{q_n}(B_R)}\le \frac{(2^{\bar n}-1)d_n\,}{2^{n-1}\,t}\|u(\cdot,t_{n-1})\|^{q_{n-1}}_{L^{q_{n-1}}(B_R)}.
$$
The latter formula can be rewritten as
\begin{equation*}
\|u(\cdot, t_{n})\|_{L^{q_n}(B_R)}\le \left(\frac{(2^{\bar n}-1)d_n}{2^{n-1}}\right)^{\frac{1}{m+q_{n-1}-1}}\,t^{-\frac{1}{m+q_{n-1}-1}}\|u(\cdot,t_{n-1})\|^{\frac{q_{n-1}}{m+q_{n-1}-1}}_{L^{q_{n-1}}(B_R)}.
\end{equation*}
Thanks to the definition of the sequence $\{q_n\}$ in \eqref{eq400} we write
\begin{equation}\label{eq413}
\|u(\cdot, t_{n})\|_{L^{q_{n}}(B_R)}\le \left(\frac{(2^{\bar n}-1)d_{n-1}}{2^{n-1}}\right)^{\frac{N}{(N-2)}\frac{1}{q_{n}}}\,t^{-\frac{N}{(N-2)}\frac{1}{q_{n}}}\left\|u(\cdot,t_{n-1})\right\|^{\frac{q_{n-1}}{q_{n}}\frac{N}{N-2}}_{L^{q_{n-1}}(B_R)}.
\end{equation}
Define $\sigma:=\frac{N}{N-2}$.
Observe that, for any $1\le n\le \bar n$, we have
\begin{equation}\label{eq415}
\begin{aligned}
\left(\frac{(2^{\bar n}-1)d_{n-1}}{2^{n-1}}\right)^{\sigma}&= \left[\frac{2^{\bar n}-1}{2^{n-1}}\left(\frac{4\,m(q_{n-1}-1)}{(m+q_{n-1}-1)^2}C_s^2-2\varepsilon^{\frac1{p-m}}\right)^{-1}\frac{1}{q_{n-1}}\right]^{\sigma}\\
&= \left[\frac{2^{\bar n}-1}{2^{n-1}}\frac{1}{\dfrac{4\,m\,q_{n-1}(q_{n-1}-1)}{(m+q_{n-1}-1)^2}C_s^2-2\tilde\varepsilon_0^{\frac1{p-m}} q_{n-1}}\right]^{\sigma},
\end{aligned}
\end{equation}
where
\begin{equation}\label{eq416}
\frac{2^{\bar n}-1}{2^{n-1}} \le 2^{\bar n+1}\,\,\,\,\,\quad\text{for all}\,\,\, 1\le n\le \bar n.
\end{equation}
Consider the function
$$
g(x):=\left[\frac{4\,m(x-1)}{(m+x-1)^2}C_s^2-2\tilde\varepsilon_0^{\frac1{p-m}}\right] x\,\,\,\,\,\quad\text{for}\,\,\,q_0\le x \le q_{\bar n},\,\,\,x\in\R.
$$
Observe that, thanks to the definition of $\sigma$, $g(x)>0$ for any $q_0\le x \le q_{\bar n}$. Moreover, $g$ has a minimum in the interval $q_0\le x \le q_{\bar n}$, call it $\tilde x$. Then we have
\begin{equation}\label{eq417}
\frac{1}{g(x)}\le \frac{1}{g(\tilde x)} \quad\quad\text{for any }\,\,\,q_0\le x \le q_{\bar n},\,\,x\in\R.
\end{equation}
Thanks to \eqref{eq415}, \eqref{eq416} and \eqref{eq417}, we can say that there exist a positive constant $C$, where $C=C(N,C_s,\varepsilon, \bar n,m,q_0)$, such that
\begin{equation}\label{eq418}
\left(\frac{(2^{\bar n}-1)d_{n-1}}{2^{n-1}}\right)^{\sigma} \le C\,,\quad  \text{for all}\,\,\, 1\le n\le\bar n.
\end{equation}
By using \eqref{eq418} and \eqref{eq413} we get, for any $1\le n\le \bar n$
\begin{equation}\label{eq419}
\|u(\cdot, t_{n})\|_{L^{q_{n}}(B_R)}\le C^{\frac{1}{q_n}}t^{-\frac{\sigma}{q_{n}}}\left\|u(\cdot,t_{n-1})\right\|^{\frac{q_{n-1}\sigma}{q_{n}}}_{L^{q_{n-1}}(B_R)}.
\end{equation}
Let us set
$$
U_n:=\|u(\cdot,t_n)\|_{L^{q_n}(B_R)}.
$$
Then \eqref{eq419} becomes
$$
\begin{aligned}
U_n&\le C^{\frac{1}{q_n}}t^{-\frac{\sigma}{q_{n}}}U_{n-1}^{\frac{q_{n-1}\sigma}{q_{n}}}\\
&\le C^{\frac{1}{q_n}}t^{-\frac{\sigma}{q_{n}}}\left [ C^{\frac{\sigma}{q_n}}t^{-\frac{\sigma^2}{q_{n}}} U_{k-2}^{\sigma^2\frac{q_{n-2}}{q_n}}\right] \\
&\le ...\\
&\le C^{\frac{1}{q_n}\sum_{i=0}^{n-1}\sigma^i}t^{-\frac{\sigma}{q_n}\sum_{i=0}^{n-1}\sigma^i} U_0^{\sigma^n\frac{q_0}{q_n}}.
\end{aligned}
$$
We define
\begin{equation}\label{eq420}
\begin{aligned}
&\alpha_n:= \frac{1}{q_n}\sum_{i=0}^{n-1}\sigma^i,\ \ \beta_n:= \frac{\sigma}{q_n}\sum_{i=0}^{n-1}\sigma^i=\sigma\,\alpha_n,
\ \ \delta_n:=\sigma^n\frac{q_0}{q_n}.
\end{aligned}
\end{equation}
By substituting $n$ with $\bar n$ into \eqref{eq420} we get
\begin{equation}\label{eq421}
\begin{aligned}
\alpha_{\bar n}:=\frac{N-2}{2}\frac{A}{q_{\bar n}},\ \ \beta_{\bar n}:=\frac{N}{2}\frac{A}{ q_{\bar n}},\ \
\delta_{\bar n}:=(A+1)\frac{q_0}{q_{\bar n}}.
\end{aligned}
\end{equation}
where $A:=\left(\frac{N}{N-2}\right)^{\bar n}-1$. Hence, in view of \eqref{eq48} and \eqref{eq421}, \eqref{eq419} with $n=\bar n$ yields
\begin{equation}\label{eq422b}
\|u(\cdot, t)\|_{L^{q_{\bar n}}(B_R)}\le C^{\frac{N-2}{2}\frac{A}{q_{\bar n}}}\,t^{-\frac{N}{2}\frac{A}{q_{\bar n}}}\left\|u_0\right\|^{q_{0}\frac{A+1}{q_{\bar n}}}_{L^{q_{0}}(B_R)}.
\end{equation}
We have proved a smoothing estimate from $q_0$ to $q_{\bar n}$.
Observe that if $q_{\bar n}= q$ then the thesis is proved.
Now suppose that $q>q_{\bar n}$. Observe that $q_0\le  q < q_{\bar n}$ and define
$$
B:=N(m-1)A+2\,q_0(A+1).
$$
From \eqref{eq422b} and Lemma \ref{lemma41} we get, by interpolation,
\begin{equation}\label{eq423b}
\begin{aligned}
\|u(\cdot, t)\|_{L^{ q}(B_R)}&\le \|u(\cdot, t)\|_{L^{q_0}(B_R)}^{\theta}\|u(\cdot, t)\|_{L^{ q_{\bar n}}(B_R)}^{1-\theta}\\
&\le \|u_0(\cdot)\|_{L^{q_0}(B_R)}^{\theta} C\,t^{-\frac{N\,A}{B}(1-\theta)}\left\|u_0\right\|^{2q_{0}\frac{A+1}{B}(1-\theta)}_{L^{q_{0}}(B_R)}\\
&=C\,t^{-\frac{N\,A}{B}(1-\theta)}\left\|u_0\right\|^{2q_{0}\frac{A+1}{B}(1-\theta)+\theta}_{L^{q_{0}}(B_R)},
\end{aligned}
\end{equation}
where
\begin{equation}\label{eq424b}
\theta=\frac{q_0}{ q}\left(\frac{q_{\bar n}- q}{q_{\bar n}-q_0}\right).
\end{equation}
Combining \eqref{eq423b}, \eqref{eq47} and \eqref{eq424b} we get the claim, noticing that $q$ was arbitrary in  $[q_0, \infty)$.
\end{proof}

\begin{remark}
One can not let $q\to+\infty$ in the above bound. In fact, one can show that $\varepsilon \longrightarrow 0\ \text{as}\ q\to\infty.$ So in such limit the hypothesis on the norm of the initial datum \eqref{epsilon0} is satisfied only when $u_0\equiv 0$.
\end{remark}

\begin{proposition}\label{prop44}
Let $m>1$, $p>m+\frac{2}{N}$, $R>0$, $p_0$ be as in \eqref{p0}, $u_0\in  L^{\infty}(B_R)$, $u_0\ge 0$. Let
\begin{equation}\label{eq424}
\begin{aligned}
&r>\,\max\left\{p_0,\, \frac N2\right\},\quad\quad s=1+\frac 2N-\frac 1r.
\end{aligned}
\end{equation}
Suppose that \eqref{epsilon0} holds for $\varepsilon_0=\varepsilon_0(p, m, N, C_s, r)$ sufficiently small.
Let $u$ be the solution to problem \eqref{eq31}. Let $M$ be such that inequality \eqref{S} holds.
Then there exists $\Gamma=\Gamma(p, m, N, r)>0$ such that, for all $t>0$,
\begin{equation}\label{eq422}
\|u(t)\|_{L^{\infty}(B_R)}
\le \Gamma\, t^{-\frac{\gamma}{ms}}\left\{\|u_0\|_{L^{p_0}(B_R)}^{\delta_{1}}+\frac{1}{m-1}\,\|u_0\|_{L^{p_0}(B_R)}^{\delta_{2}} \right\}^{\frac{1}{ms}}\|u_0\|_{L^{m}(B_R)}^{\frac{s-1}{s}},
\end{equation}
where
\begin{equation}\label{eq423}
\gamma= \frac{p}{p-1}\left[1-\frac{N(p-m)}{2\,p\,r}\right],\,\delta_{1}=p\,\frac{p-m}{m-1}\left[1+\frac{N(m-1)}{2\,p\,r}\right],\, \delta_{2}=\frac{p-m}{m-1}\left[1+\frac{N(m-1)}{2\,r}\right]
\end{equation}
\end{proposition}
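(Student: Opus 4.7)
The plan is to reduce the parabolic $L^\infty$ bound on $u(\cdot,t)$ to an \emph{elliptic} bound on $u^m(\cdot,t)$, then plug in the time-decay supplied by the $L^q$ smoothing of Proposition \ref{prop42} together with the $L^m$ contraction of Lemma \ref{lemma41}.

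Fix $t>0$. By the Aronson--Benilan-type estimate in Proposition \ref{prop43}, the function $v:=u^m(\cdot,t)\in H_0^1(B_R)$ is a weak subsolution, in the sense of Definition \ref{def32}, of $-\Delta v=f$ with
\[
f:=u^p(\cdot,t)+\tfrac{1}{(m-1)t}\,u(\cdot,t).
\]
Since $r>N/2$, I would apply the Stampacchia-type bound of Proposition \ref{prop1} (with the exponent denoted there by $m$ replaced by our $r$, so that the associated $s=1+\frac{2}{N}-\frac{1}{r}$ coincides with \eqref{eq424}) to obtain
\[
\|u^m(t)\|_{L^\infty(B_R)}\le \frac{s}{s-1}\Bigl(\frac{1}{C_s}\Bigr)^{2/s}\|f\|_{L^r(B_R)}^{1/s}\,\|u^m(t)\|_{L^1(B_R)}^{(s-1)/s}.
\]
The point of passing through Proposition \ref{prop43} is precisely that this bound is $R$-independent (Remark \ref{remark2}), which will later enable the passage $R\to\infty$ in the proof of Theorem \ref{teo22}.

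Next I would control each factor on the right using the smallness hypothesis \eqref{epsilon0}. For the $L^1$ factor, $\|u^m(t)\|_{L^1(B_R)}=\|u(t)\|_{L^m(B_R)}^m$, and provided $\varepsilon_0$ is small enough to trigger Lemma \ref{lemma41} with $q=m$, this is at most $\|u_0\|_{L^m(B_R)}^m$. For the $L^r$ factor, the triangle inequality gives
\[
\|f\|_{L^r(B_R)}\le \|u(t)\|_{L^{pr}(B_R)}^{p}+\tfrac{1}{(m-1)t}\|u(t)\|_{L^{r}(B_R)};
\]
since $r,pr>p_0$, I would further shrink $\varepsilon_0$ so that Proposition \ref{prop42} applies with $q_0=p_0$ and $q\in\{r,pr\}$, yielding
\[
\|u(t)\|_{L^{pr}(B_R)}\le C\,t^{-\gamma_{pr}}\|u_0\|_{L^{p_0}(B_R)}^{\delta_{pr}},\qquad \|u(t)\|_{L^{r}(B_R)}\le C\,t^{-\gamma_{r}}\|u_0\|_{L^{p_0}(B_R)}^{\delta_{r}},
\]
with exponents given by \eqref{eq47}. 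The final $\varepsilon_0=\varepsilon_0(p,m,N,r,C_s)$ is the minimum of the thresholds produced by these two invocations.

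Finally, a direct algebraic simplification using the identities $p_0+\tfrac{N}{2}(m-1)=\tfrac{N}{2}(p-1)$ and $2p_0+N(m-1)=N(p-1)$ shows $p\gamma_{pr}=\gamma_r+1=\gamma$, while $p\delta_{pr}$ and $\delta_r$ reduce to the exponents $\delta_1,\delta_2$ of \eqref{eq423}. Substituting back one obtains
\[
\|f\|_{L^r(B_R)}\le C\,t^{-\gamma}\Bigl\{\|u_0\|_{L^{p_0}(B_R)}^{\delta_1}+\tfrac{1}{m-1}\|u_0\|_{L^{p_0}(B_R)}^{\delta_2}\Bigr\},
\]
and plugging this into the elliptic estimate, followed by taking the $1/m$-th power and absorbing the numerical constants into a single $\Gamma=\Gamma(p,m,N,r)$, recovers \eqref{eq422}. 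I expect the main obstacle to be the somewhat tedious bookkeeping in this last step: verifying that the exponents produced by the Moser iteration of Proposition \ref{prop42} recombine, after being raised to the $1/s$ and $1/m$ powers, to exactly the values $\gamma,\delta_1,\delta_2$ stated in \eqref{eq423}.
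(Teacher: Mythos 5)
Your proposal is correct and follows essentially the same route as the paper's proof: Aronson--Benilan (Proposition \ref{prop43}) to make $u^m(\cdot,t)$ a subsolution of an elliptic problem, the Stampacchia-type bound of Proposition \ref{prop1} with exponent $r>N/2$, then Proposition \ref{prop42} with $q_0=p_0$ and $q\in\{pr,r\}$ for the source term and Lemma \ref{lemma41} with $q=m$ for the $L^1$ factor, together with the same exponent identities $p\gamma_{pr}=\gamma_r+1=\gamma$. The bookkeeping you flag as the main obstacle indeed works out exactly as you describe (your computed $\delta$'s agree with the $\frac{p-m}{p-1}$ normalization used in Theorem \ref{teo22}).
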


\begin{remark}\label{remark3}
\rm If in Proposition \ref{prop44}, in addition, we assume that for some $k_0>0$
$$
\max\left\{\|u_0\|_{L^m(B_R)};\,\,\|u_0\|_{L^{p_0}(B_R)}\right\}\leq k_0\quad \textrm{ for every }\,\, R>0\,,
$$
then the bound from above for $\|u(t)\|_{L^{\infty}(B_R)}$ in \eqref{eq422} is independent of $R$.
\end{remark}

\begin{proof}[Proof of Proposition \ref{prop44}]
Let us set $w=u(\cdot,t)$. Observe that $w^m\in H_0^1(B_R)$ and $w\ge0$. Due to Proposition \ref{prop43} we know that
\begin{equation*}
-\Delta(w^m) \le \left [w^p+\frac{w}{(m-1)t} \right].
\end{equation*}
Observe that, since $u_0\in L^{\infty}(B_R)$ also $w\in L^{\infty}(B_R)$.
Due to \eqref{eq424}, we can apply Proposition \ref{prop1}.
So, we have that
\begin{equation}\label{eq425}
\begin{aligned}
\|w\|_{L^{\infty}(B_R)}^m&\le \frac{s}{s-1}\left(\frac{1}{C_s}\right)^{\frac 2s}\left\|w^p+\frac{w}{(m-1)t}\right\|_{L^{r}(B_R)}^{\frac{1}{s}} \|w^m\|_{L^{1}(B_R)}^{\frac{s-1}{s}}\\
&\le \frac{s}{s-1}\left(\frac{1}{C_s}\right)^{\frac 2s}\left\{\left\|w^p\right\|_{L^{r}(B_R)}+\frac{1}{(m-1)t}\left\|w\right\|_{L^{r}(B_R)}\right\}^{\frac{1}{s}} \|w\|_{L^{m}(B_R)}^{m\frac{s-1}{s}}
\end{aligned}
\end{equation}
where $s$ has been defined in \eqref{eq35}. Thanks to \eqref{epsilon0}, with an appropriate choice of $\varepsilon_0$, and \eqref{eq424} we can apply Proposition \ref{prop42} with
$$
q=pr,\quad q_0=p_0, \quad \gamma_{pr}=\frac{1}{p-1}\left[1-\frac{N(p-m)}{2pr}\right]$$
and $\delta_{pr}=\delta_1/p$, $\delta_1$ defined in \eqref{eq423}.
Hence we obtain
\begin{equation}\label{eq426}
\|w^p\|_{L^{r}(B_R)} = \left\|w\right\|^p_{L^{pr}(B_R)} \le \left[C\, t^{-\gamma_{pr}}\|u_0\|_{L^{p_0}(B_R)}^{\delta_{1}/p}\right]^{p},
\end{equation}
where $C>0$ is defined in Proposition \ref{prop42}. Similarly, by \eqref{epsilon0}, with an appropriate choice of $\varepsilon_0$, and \eqref{eq424}, we can apply Proposition \ref{prop42} with
$$
q=r,\quad q_0=p_0, \quad \gamma_{r}=\frac{1}{p-1}\left[1-\frac{N(p-m)}{2r}\right]$$
and $\delta_{r}=\delta_2$ as defined in \eqref{eq423}.
Hence we obtain
\begin{equation}\label{eq427}
\|w\|_{L^{r}(B_R)} \le C t^{-\gamma_{r}}\|u_0\|_{L^{p_0}(B_R)}^{\delta_{2}},
\end{equation}
where $C>0$ is defined in Proposition \ref{prop42}. Plugging \eqref{eq426} and \eqref{eq427} into \eqref{eq425} we obtain
$$
\begin{aligned}
\|w\|^m_{L^{\infty}(B_R)} &\le \frac{s}{s-1}\left(\frac{1}{C_s}\right)^{\frac 2s}\left\{\left\|w^p\right\|_{L^{r}(B_R)}+\frac{1}{(m-1)t}\left\|w\right\|_{L^{r}(B_R)}\right\}^{\frac{1}{s}} \|w\|_{L^{m}(B_R)}^{m\frac{s-1}{s}}\\
&\le\frac{s}{s-1}\left(\frac{1}{C_s}\right)^{\frac 2s}\left\{C^p\,t^{-p\,\gamma_{pr}}\|u_0\|_{L^{p_0}(B_R)}^{\delta_{1}}+\frac{1}{(m-1)t}\,C\,t^{-\gamma_{r}}\|u_0\|_{L^{p_0}(B_R)}^{\delta_{2}} \right\}^{\frac{1}{s}}\|w\|_{L^{m}(B_R)}^{m\frac{s-1}{s}}.
\end{aligned}
$$
Observe that $-p\gamma_{pr}=-\gamma_r-1=\gamma,$ where $\gamma$ has been defined in \eqref{eq423}. Hence we obtain
\begin{equation*}
\|w\|^m_{L^{\infty}(B_R)}
\le  \frac{s}{s-1}\left(\frac{1}{C_s}\right)^{\frac 2s} t^{-\frac{\gamma}{s}}\left\{C^p\,\|u_0\|_{L^{p_0}(B_R)}^{\delta_{1}}+\frac{1}{m-1}\,C\,\|u_0\|_{L^{p_0}(B_R)}^{\delta_{2}} \right\}^{\frac{1}{s}}\|w\|_{L^{m}(B_R)}^{m\frac{s-1}{s}}.
\end{equation*}
Moreover, since $u_0\in L^{\infty}(B_R)$, we can apply Lemma \ref{lemma41} to $w$ with $q=m$. Thus from \eqref{eq42} with $q=m$ we get
\begin{equation*}
\|w\|^m_{L^{\infty}(B_R)}
\le \frac{s}{s-1} \left(\frac{1}{C_s}\right)^{\frac 2s}t^{-\frac{\gamma}{s}}\left\{C^p\,\|u_0\|_{L^{p_0}(B_R)}^{\delta_{1}}+\frac{1}{m-1}\,C\,\|u_0\|_{L^{p_0}(B_R)}^{\delta_{2}} \right\}^{\frac{1}{s}}\|u_0\|_{L^{m}(B_R)}^{m\frac{s-1}{s}}.
\end{equation*}
Finally define
\begin{equation*}
\Gamma:= \left[\frac{s}{s-1}\left(\frac{1}{C_s}\right)^{\frac 2s}\,\max\left\{C^{\frac{p}{s}}\,;\,\,\,C^{\frac{1}{s}}\right\}\right]^{\frac 1m}.
\end{equation*}
Hence we obtain
\begin{equation*}
\|w\|_{L^{\infty}(B_R)}
\le \Gamma\, t^{-\frac{\gamma}{ms}}\left\{\|u_0\|_{L^{p_0}(B_R)}^{\delta_{1}}+\frac{1}{m-1}\,\|u_0\|_{L^{p_0}(B_R)}^{\delta_{2}} \right\}^{\frac{1}{ms}}\|u_0\|_{L^{m}(B_R)}^{\frac{s-1}{s}}.
\end{equation*}

\end{proof}

\section{Proof of Theorem \ref{teo22}}\label{proofs}

\begin{proof}[Proof of Theorem \ref{teo22}]
Let $\{u_{0,h}\}_{h\ge 0}$ be a sequence of functions such that
\begin{equation*}
\begin{aligned}
&(a)\,\,u_{0,h}\in L^{\infty}(M)\cap C_c^{\infty}(M) \,\,\,\text{for all} \,\,h\ge 0, \\
&(b)\,\,u_{0,h}\ge 0 \,\,\,\text{for all} \,\,h\ge 0, \\
&(c)\,\,u_{0, h_1}\leq u_{0, h_2}\,\,\,\text{for any } h_1<h_2,  \\
&(d)\,\,u_{0,h}\longrightarrow u_0 \,\,\, \text{in}\,\, L^m(M)\cap L^{p_0}(M)\quad \textrm{ as }\, h\to +\infty\,,\\
\end{aligned}
\end{equation*}
where $p_0$ has been defined in \eqref{p0}. Observe that, due to assumptions $(c)$ and $(d)$, $u_{0,h}$ satisfies \eqref{epsilon0}. For any $R>0$, $k>0$, $h>0$, consider the problem
\begin{equation}\label{5}
\begin{cases}
u_t= \Delta u^m +T_k(u^p) &\text{in}\,\, B_R\times (0,+\infty)\\
u=0& \text{in}\,\, \partial B_R\times (0,\infty)\\
u=u_{0,h} &\text{in}\,\, B_R\times \{0\}\,. \\
\end{cases}
\end{equation}
From standard results it follows that problem \eqref{5} has a solution $u_{h,k}^R$ in the sense of Definition \ref{def31}; moreover, $u^R_{h,k}\in C\big([0, T]; L^q(B_R)\big)$ for any $q>1$. Hence, by Lemma \ref{lemma41}, in Proposition \ref{prop42} and in Proposition \ref{prop44}, we have for any $t\in(0,+\infty)$,
\begin{equation}\label{eq52}
\|u_{h,k}^R(t)\|_{L^m(B_R)}\,\le\, \|u_{0,h}\|_{L^m(B_R)};
\end{equation}
\begin{equation}\label{eq53}
\|u_{h,k}^R(t)\|_{L^p(B_R)}\le C\,t^{-\gamma_p} \|u_{0,h}\|^{\delta_p}_{L^{p_0}(B_R)}\,;
\end{equation}
where
$$
\gamma_p=\frac{1}{p-1}\left[1-\frac{N(p-m)}{2p}\right],\quad \delta_p=\frac{p-m}{p-1}\left[1+\frac{N(m-1)}{2p}\right]\,,
$$
\begin{equation}\label{eq54}
\|u_{h,k}^R\|_{L^{\infty}(B_R)}
\le \Gamma\, t^{-\frac{\gamma}{ms}}\left\{\|u_{0,h}\|_{L^{p_0}(B_R)}^{\delta_{1}}+\frac{1}{m-1}\,\|u_{0,h}\|_{L^{p_0}(B_R)}^{\delta_{2}} \right\}^{\frac{1}{ms}}\|u_{0,h}\|_{L^{m}(B_R)}^{\frac{s-1}{s}},
\end{equation}
with $s$ as in \eqref{eq424} and $\gamma$, $\delta_1$, $\delta_2$ as in \eqref{eq423}. In addition, for any $\tau\in (0, T), \zeta\in C^1_c((\tau, T)), \zeta\geq 0$, $\max_{[\tau, T]}\zeta'>0$,
\begin{equation}\label{eqcont1}
\begin{aligned}
\int_{\tau}^T \zeta(t) \left[\big((u^R_{h, k})^{\frac{m+1}2}\big)_t\right]^2 d\mu dt &\leq \max_{[\tau, T]}\zeta' \bar C \int_{B_R}(u_{h, k}^R)^{m+1}(x, \tau)d\mu\\
&+ \bar C \max_{[\tau, T]}\zeta \int_{B_R} F\big(u^{R}_{h, k}(x,T)\big)d\mu\\
&\leq  \max_{[\tau, T]}\zeta'(t)\bar C \|u^R_{h, k}(\tau)\|_{L^\infty(B_R)}\|u^R_{h, k}(\tau)\|_{L^m(B_R)}^m \\
&+\frac {\bar C}{m+p}\|u^R_{h, k}(T)\|^p_{L^\infty(B_R)}\|u^R_{h, k}(T)\|_{L^m(B_R)}^m
\end{aligned}
\end{equation}
where
\[F(u)=\int_0^u s^{m-1+p} \, ds\,,\]
and $\bar C>0$ is a constant only depending on $m$. Inequality \eqref{eqcont1} is formally obtained by multiplying the differential inequality in problem \eqref{eq31} by $\zeta(t)[(u^m)_t]$, and integrating by parts; indeed, a standard approximation procedure is needed (see \cite[Lemma 3.3]{GMPo} and \cite[Theorem 13]{ACP}).

\smallskip

Moreover, as a consequence of Definition \ref{def31}, for any $\varphi \in C_c^{\infty}(B_R\times[0,T])$ such that $\varphi(x,T)=0$ for any $x\in B_R$, $u_{h,k}^R$ satisfies
\begin{equation}\label{eq56}
\begin{aligned}
-\int_0^T\int_{B_R}u_{h,k}^R\,\varphi_t\,d\mu\,dt =&\int_0^T\int_{B_R} (u_{h,k}^R)^m\,\Delta\varphi\,d\mu\,dt\,+ \int_0^T\int_{B_R} T_k[(u_{h,k}^R)^p]\,\varphi\,d\mu\,dt \\
& +\int_{B_R} u_{0,h}(x)\,\varphi(x,0)\,d\mu,
\end{aligned}
\end{equation}
where all the integrals are finite.
Now, observe that, for any $h>0$ and $R>0$ the sequence of solutions $\{u_{h,k}^R\}_{k\ge0}$ is monotone increasing in $k$ hence it has a pointwise limit for $k\to \infty$. Let $u_h^R$ be such limit so that we have
$$
u_{h,k}^R\longrightarrow u_{h}^R \quad \text{as} \,\,\, k\to\infty \,\,\text{pointwise}.
$$
In view of \eqref{eq52}, \eqref{eq53} and \eqref{eq54}, the right hand side of \eqref{eqcont1} is independent of $k$. So, $(u^R_h)^{\frac{m+1}2}\in H^1((\tau, T); L^2(B_R))$. Therefore, $(u^R_h)^{\frac{m+1}2}\in C\big([\tau, T]; L^2(B_R)\big)$. We can now pass to the limit as $k\to +\infty$ in inequalities \eqref{eq52}, \eqref{eq53} and \eqref{eq54} arguing as follows. From inequality \eqref{eq52} and \eqref{eq53}, thanks to the Fatou's Lemma, one has for all $t>0$
\begin{equation}\label{eq58}
\begin{aligned}
\|u_{h}^R(t)\|_{L^m(B_R)}\leq  \|u_{0,h}\|_{L^m(B_R)}.
\end{aligned}
\end{equation}
\begin{equation}\label{eq59}
\|u_{h}^R(t)\|_{L^p(B_R)}\le C\,t^{-\gamma_p} \|u_{0,h}\|^{\delta_p}_{L^{p_0}(B_R)}\,;
\end{equation}
On the other hand, from \eqref{eq54}, since $u_{h,k}^R\longrightarrow u_{h}^R$ as $k\to \infty$ pointwise and the right hand side of \eqref{eq54} is independent of $k$, one has for all $t>0$
\begin{equation}\label{eq510}
\|u_{h}^R\|_{L^{\infty}(B_R)}
\le \Gamma\, t^{-\frac{\gamma}{ms}}\left\{\|u_{0,h}\|_{L^{p_0}(B_R)}^{\delta_{1}}+\frac{1}{m-1}\,\|u_{0,h}\|_{L^{p_0}(B_R)}^{\delta_{2}} \right\}^{\frac{1}{ms}}\|u_{0,h}\|_{L^{m}(B_R)}^{\frac{s-1}{s}},
\end{equation}
with $s$ as in \eqref{eq424} and $\gamma$, $\delta_1$, $\delta_2$ as in \eqref{eq423}. Note that \eqref{eq58}, \eqref{eq59} and \eqref{eq510} hold {\em for all} $t>0$, in view of the continuity property of $u$ deduced above.
Moreover, thanks to Beppo Levi's monotone convergence theorem, it is possible to compute the limit as $k\to +\infty$ in the integrals of equality \eqref{eq56} and hence obtain that, for any $\varphi \in C_c^{\infty}(B_R\times(0,T))$ such that $\varphi(x,T)=0$ for any $x\in B_R$, the function $u_h^R$ satisfies
\begin{equation}\label{eq511}
\begin{aligned}
-\int_0^T\int_{B_R} u_{h}^R\,\varphi_t\,d\mu\,dt =&\int_0^T\int_{B_R} \left(u_{h}^R\right)^m\,\Delta\varphi\,d\mu\,dt+ \int_0^T\int_{B_R} \left(u_{h}^R\right)^p\,\varphi\,d\mu\,dt \\
& +\int_{B_R} u_{0,h}(x)\,\varphi(x,0)\,d\mu.
\end{aligned}
\end{equation}
Observe that all the integrals in \eqref{eq511} are finite, hence $u_h^R$ is a solution to problem \eqref{5}, where we replace $T_k(u^p)$ with $u^p$ itself, in the sense of Definition \ref{def31}. Indeed we have, due to \eqref{eq58}, $u_{h}^R \in L^m(B_R\times(0,T))$ hence $u_{h}^R \in L^1(B_R\times(0,T))$. Moreover, due to \eqref{eq59}, $u_{h}^R \in L^p(B_R\times(0,T))$ indeed we can write
\begin{equation}\label{eq512}
\begin{aligned}
\int_0^T\int_{B_R} \left(u_{h}^R\right)^p\,d\mu\,dt\, &=\int_0^T\|u_{h}^R\|^p_{L^p(B_R)}\,dt\\
&\le \int_0^T \left(C\,t^{-\gamma_p} \|u_{0,h}\|^{\delta_p}_{L^{p_0}(B_R)}\right)^p\,dt\\
&= C^p\,\|u_{0,h}\|^{p\delta_p}_{L^{p_0}(B_R)}\int_0^T t^{-p\gamma_p}\,dt.
\end{aligned}
\end{equation}
Now observe that the integral in \eqref{eq512} is finite if and only if $p\,\gamma_p<1\,.$
The latter reads $p>m+\frac{2}{N}$, which is guaranteed by the hypotheses of Theorem \ref{teo22}.

Let us now observe that, for any $h>0$, the sequence of solutions $\{u_h^R\}_{R>0}$ is monotone increasing in $R$, hence it has a pointwise limit as $R\to+\infty$. We call its limit function $u_h$ so that
$$
u_{h}^R\longrightarrow u_{h} \quad \text{as} \,\,\, R\to+\infty \,\,\text{pointwise}.
$$
In view of \eqref{eq52}, \eqref{eq53}, \eqref{eq54}, \eqref{eq58}, \eqref{eq59}, \eqref{eq510}, the right hand side of \eqref{eqcont1} is independent of $k$ and $R$. So, $(u_h)^{\frac{m+1}2}\in H^1((\tau, T); L^2(M))$. Therefore, $(u_h)^{\frac{m+1}2}\in C\big([\tau, T]; L^2(M)\big)$. Since $u_0\in L^m(M)\cap L^{p_0}(M)$, there exists $k_0>0$ and $k_1>0$ such that
\begin{equation}\label{eq513}
\begin{aligned}
&\|u_{0h}\|_{L^m(B_R)}\leq k_0 \quad\quad\quad\, \forall\,\, h>0,\,\,\,\, \forall\,\,R>0\,,\\
&\|u_{0h}\|_{L^{p_0}(B_R)}\leq k_1 \quad \forall\,\, h>0,\,\,\,\, \forall\,\,R>0\,.
\end{aligned}
\end{equation}
Note that, in view of \eqref{eq513}, the norms in  \eqref{eq58}, \eqref{eq59} and \eqref{eq510} do not depend on $R$ (see Lemma \ref{lemma41}, Proposition \ref{prop42}, Proposition \ref{prop44} and Remark \ref{remark3}). Therefore, we pass to the limit as $R\to+\infty$ in \eqref{eq58}, \eqref{eq59} and \eqref{eq510}. By Fatou's Lemma,
\begin{equation}\label{eq514}
\begin{aligned}
\|u_{h}(t)\|_{L^m(M)}\leq  \|u_{0,h}\|_{L^m(M)},
\end{aligned}
\end{equation}
\begin{equation}\label{eq515}
\|u_{h}(t)\|_{L^p(M)}\le C\,t^{-\gamma_p} \|u_{0,h}\|^{\delta_p}_{L^{p_0}(M)}\,,
\end{equation}
furthermore, since $u_{h}^R\longrightarrow u_{h} $ as $R\to +\infty$ pointwise,
\begin{equation}\label{eq516}
\|u_{h}\|_{L^{\infty}(M)}
\le \Gamma\, t^{-\frac{\gamma}{ms}}\left\{\|u_{0,h}\|_{L^{p_0}(M)}^{\delta_{1}}+\frac{1}{m-1}\,\|u_{0,h}\|_{L^{p_0}(M)}^{\delta_{2}} \right\}^{\frac{1}{ms}}\|u_{0,h}\|_{L^{m}(M)}^{\frac{s-1}{s}},
\end{equation}
with $s$ as in \eqref{eq424} and $\gamma$, $\delta_1$, $\delta_2$ as in \eqref{eq423}.
Note that \eqref{eq514}, \eqref{eq515} and \eqref{eq516} hold {\em for all} $t>0$, in view of the continuity property of $u^R_h$ deduced above.

Moreover, again by monotone convergence, it is possible to compute the limit as $R\to +\infty$ in the integrals of equality \eqref{eq511} and hence obtain that, for any $\varphi \in C_c^{\infty}(M\times(0,T))$ such that $\varphi(x,T)=0$ for any $x\in M$, the function $u_h$ satisfies,
\begin{equation}\label{eq517}
\begin{aligned}
-\int_0^T\int_{M} u_{h}\,\varphi_t\,d\mu\,dt =&\int_0^T\int_{M} (u_{h})^m\,\Delta\varphi\,d\mu\,dt+ \int_0^T\int_{M} (u_{h})^p\,\varphi\,d\mu\,dt \\
& +\int_{M} u_{0,h}(x)\,\varphi(x,0)\,d\mu.
\end{aligned}
\end{equation}
Observe that, arguing as above, due to inequalities \eqref{eq514} and \eqref{eq515}, all the integrals in \eqref{eq517} are well posed hence $u_h$ is a solution to problem \eqref{problema}, where we replace $u_0$ with $u_{0,h}$, in the sense of Definition \ref{def21}.
Finally, let us observe that $\{u_{0,h}\}_{h\ge0}$ has been chosen in such a way that
$$
u_{0,h}\longrightarrow u_0 \,\,\, \text{in}\,\, L^m(M)\cap L^{p_0}(M).
$$
Observe also that $\{u_{h}\}_{h\ge0}$ is a monotone increasing function in $h$ hence it has a limit as $h\to+\infty$. We call $u$ the limit function.
In view  \eqref{eq52}, \eqref{eq53}, \eqref{eq54}, \eqref{eq58}, \eqref{eq59}, \eqref{eq510}, \eqref{eq514}, \eqref{eq515} and \eqref{eq516}  the right hand side of \eqref{eqcont1} is independent of $k, R$ and $h$. So, $u^{\frac{m+1}2}\in H^1((\tau, T); L^2(M))$. Therefore, $u^{\frac{m+1}2}\in C\big([\tau, T]; L^2(M)\big)$. Hence, we can pass to the limit as $h\to +\infty$ in \eqref{eq514}, \eqref{eq515} and \eqref{eq516} and similarly to what we have seen above, we get
\begin{equation}\label{eq518}
\begin{aligned}
\|u(t)\|_{L^m(M)}\leq  \|u_{0}\|_{L^m(M)},
\end{aligned}
\end{equation}
\begin{equation}\label{eq519}
\|u(t)\|_{L^p(M)}\le C\,t^{-\gamma_p} \|u_{0}\|^{\delta_p}_{L^{p_0}(M)}\,,
\end{equation}
and
\begin{equation}\label{eq520}
\|u\|_{L^{\infty}(M)}
\le \Gamma\, t^{-\frac{\gamma}{ms}}\left\{\|u_0\|_{L^{p_0}(M)}^{\delta_{1}}+\frac{1}{m-1}\,\|u_0\|_{L^{p_0}(M)}^{\delta_{2}} \right\}^{\frac{1}{ms}}\|u_0\|_{L^{m}(M)}^{\frac{s-1}{s}},
\end{equation}
with $s$ as in \eqref{eq424} and $\gamma$, $\delta_1$, $\delta_2$ as in \eqref{eq423}.
Note that both \eqref{eq518}, \eqref{eq519} and \eqref{eq520} hold {\em for all} $t>0$, in view of the continuity property of $u$ deduced above.

Moreover, again by monotone convergence, it is possible to compute the limit as $h\to+\infty$ in the integrals of equality \eqref{eq517} and hence obtain that, for any $\varphi \in C_c^{\infty}(M\times(0,T))$ such that $\varphi(x,T)=0$ for any $x\in M$, the function $u$ satisfies,
\begin{equation}\label{eq521}
\begin{aligned}
-\int_0^T\int_{M} u\,\varphi_t\,d\mu\,dt =&\int_0^T\int_{M} u^m\,\Delta\varphi\,d\mu\,dt+ \int_0^T\int_{M} u^p\,\varphi\,d\mu\,dt \\
& +\int_{M} u_{0}(x)\,\varphi(x,0)\,d\mu.
\end{aligned}
\end{equation}
Observe that, due to inequalities \eqref{eq518} and \eqref{eq519}, all the integrals in \eqref{eq521} are finite, hence $u$ is a solution to problem \eqref{problema} in the sense of Definition \ref{def21}.

\smallskip

\medskip

Finally, let us discuss \eqref{eq22} and \eqref{eq23}. Let $p_0\le q<\infty$, and observe that, thanks to hypotheses $(c)$ and $(d)$, $u_{0h}$ satisfies hypothesis \eqref{eps3a} for such $q$ and $q_0=p_0$ as $u_0$, then we have
\begin{equation}\label{eq523}
\|u_{h,k}^R(t)\|_{L^q(B_R)}\,\le\, C \,t^{-\gamma_q}\|u_{0,h}\|^{\delta_q}_{L^{p_0}(B_R)}.
\end{equation}
Hence, due to \eqref{eq523}, letting $k\to +\infty$, $R\to +\infty$, $h\to +\infty$, by Fatou's Lemma we deduce \eqref{eq23}.

Now let $1<q<\infty$. If $u_0\in L^q(M)\cap L^m(M)\cap L^{p_0}(M)$, we choose the sequence $u_{0h}$ in such a way that it further satisfies
\[u_{0,h}\longrightarrow u_0 \quad \textrm{ in }\,\, L^q(M)\,\quad \textrm{ as }\, h\to +\infty\,,\]
and observe that $u_{0h}$ satisfies also \eqref{epsilon1} for such $q$.
Then we have that
\begin{equation}\label{eq522}
\|u_{h,k}^R(t)\|_{L^q(B_R)}\,\le\, \|u_{0,h}\|_{L^q(B_R)}.
\end{equation}
Hence, due to \eqref{eq522}, letting $k\to +\infty$, $R\to +\infty$, $h\to +\infty$, by Fatou's Lemma we deduce \eqref{eq22}.
\end{proof}

\section{Estimates for $p>m$}\label{Lpbis}

\begin{lemma}\label{lemma71}
Let $m>1, p>m$. Assume that inequalities \eqref{P} and \eqref{S} hold. Suppose that $u_0\in L^{\infty}(B_R)$, $u_0\ge0$. Let $1<q<\infty$ and assume that
\begin{equation}\label{epsilon11a}
\|u_0\|_{L^{p\frac N2}(B_R)}<\tilde\varepsilon_1
\end{equation}
for a suitable $\tilde\varepsilon_1=\tilde \varepsilon_1(p, m, N, C_p, C_s, q)$ sufficiently small. Let $u$ be the solution of problem \eqref{eq31} in the sense of Definition \ref{def31},
such that in addition $u\in C([0, T); L^q(B_R))$. Then
\begin{equation}\label{eq72}
\|u(t)\|_{L^q(B_R)} \le \|u_0\|_{L^q(B_R)}\quad \textrm{ for all }\,\, t>0\,.
\end{equation}
\end{lemma}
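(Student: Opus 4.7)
The plan is to follow closely the strategy of Lemma \ref{lemma41}, but to exploit both the Sobolev inequality \eqref{S} and the Poincaré inequality \eqref{P} via a refined interpolation, so as to replace the assumption $p>m+2/N$ by the weaker $p>m$. Since $u_0\in L^\infty(B_R)$ and $T_k$ is bounded and Lipschitz, standard parabolic theory yields a unique bounded solution $u$ to \eqref{eq31} with $u\in C([0,T);L^q(B_R))$ for every $q\in(1,\infty)$, so the map $t\mapsto\|u(t)\|_{L^q(B_R)}$ is continuous, which is essential for the final bootstrap.

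First I would perform the same basic energy computation as in Lemma \ref{lemma41}: multiplying the equation in \eqref{eq31} by $u^{q-1}$, integrating over $B_R$, integrating by parts (justified via the standard approximation argument), and setting $v:=u^{(m+q-1)/2}$, I would get
\[
\frac{1}{q}\frac{d}{dt}\|u(t)\|_{L^q(B_R)}^q \,\le\, -\frac{4m(q-1)}{(m+q-1)^2}\,\|\nabla v\|_{L^2(B_R)}^2 \,+\, \int_{B_R} u^{p+q-1}\,d\mu,
\]
where I used $T_k(u^p)\le u^p$.

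The crucial step is the treatment of the reaction integral, and this is where Poincaré enters. Writing $u^{p+q-1}=u^{p-m}\cdot u^{m+q-1}$ and applying H\"older with exponents $a=\beta/(\beta-2)$ and $a'=\beta/2$, where $\beta$ is determined by $(p-m)a=pN/2$, i.e.\
\[
\beta=\frac{2pN}{p(N-2)+2m},
\]
one checks directly that $\beta\le 2^*$ automatically and that $\beta>2$ is equivalent precisely to $p>m$. Hence
\[
\int_{B_R} u^{p-m}\cdot u^{m+q-1}\,d\mu \,\le\, \|u(t)\|_{L^{pN/2}(B_R)}^{\,p-m}\, \|v\|_{L^\beta(B_R)}^{2}.
\]
Interpolating $\|v\|_{L^\beta}\le\|v\|_{L^2}^{1-\theta}\,\|v\|_{L^{2^*}}^{\theta}$, with $\theta\in[0,1]$ such that $1/\beta=(1-\theta)/2+\theta/2^*$, then applying Poincar\'e to the $L^2$ factor and Sobolev to the $L^{2^*}$ factor, I would get
\[
\|v\|_{L^\beta(B_R)}^{2}\,\le\, \tilde C\,\|\nabla v\|_{L^2(B_R)}^{2}, \qquad \tilde C=C_p^{-2(1-\theta)}C_s^{-2\theta}.
\]
Combining,
\[
\frac{1}{q}\frac{d}{dt}\|u(t)\|_{L^q(B_R)}^q \,\le\, -\Bigl[\frac{4m(q-1)}{(m+q-1)^2}-\tilde C\,\|u(t)\|_{L^{pN/2}(B_R)}^{\,p-m}\Bigr]\,\|\nabla v\|_{L^2(B_R)}^2.
\]

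Finally I would close the argument by the same bootstrap procedure as in Lemma \ref{lemma41}. Applied first with the choice $q=pN/2$, the smallness assumption \eqref{epsilon11a}, with $\tilde\varepsilon_1$ as in Remark \ref{remark5} calibrated so that $\tilde C\,\|u_0\|_{L^{pN/2}}^{p-m}<4m(pN/2-1)/(m+pN/2-1)^2$, together with the continuity of $t\mapsto\|u(t)\|_{L^{pN/2}(B_R)}$, makes the bracket strictly positive on a small interval $[0,t_0]$, so that $\|u(t_0)\|_{L^{pN/2}(B_R)}\le \|u_0\|_{L^{pN/2}(B_R)}<\tilde\varepsilon_1$; iterating on $[t_0,t_1]$, $[t_1,t_2],\dots$ propagates the smallness to all of $[0,T]$ for arbitrary $T>0$. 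Once the $L^{pN/2}$ norm is known to be nonincreasing and small for all $t>0$, applying the same differential inequality with the desired exponent $q$ (the hypothesis \eqref{epsilon11a} being taken to be the minimum of the smallness thresholds for both $pN/2$ and $q$) yields \eqref{eq72}. The principal obstacle is the sharp selection of $\beta$: Poincar\'e is precisely what makes the $L^2$ side of the interpolation usable, and this is what allows us to drop from $p>m+2/N$ down to $p>m$.
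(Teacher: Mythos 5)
Your proof is correct, and it handles the key absorption step by a genuinely different route than the paper. The paper splits $\|\nabla(u^{\frac{m+q-1}{2}})\|_{L^2}^2$ into two pieces $c_1+c_2=1$, applies Poincar\'e to each (once with an auxiliary exponent $\alpha$, ultimately fixed to $\alpha=m/p$), and estimates the reaction term by interpolating $\|u\|_{L^{p+q-1}}$ between $L^{m+q-1}$ and $L^{p+m+q-1}$, then matching the resulting power $\|\nabla v\|_{L^2}^{2-2\alpha}$ against the surviving gradient piece; the smallness threshold there involves the exponent $\frac{p(p+q-1)-m(m+q-1)}{p+m+q-1}$ of $\|u\|_{L^{p\frac N2}}$. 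You instead apply a single H\"older inequality to $u^{p-m}\cdot u^{m+q-1}$ calibrated so that the first factor lands exactly in $L^{pN/2}$, which forces the second factor into $\|v\|_{L^\beta}^2$ with $\beta=\frac{2pN}{p(N-2)+2m}$; the observation that $2<\beta\le 2^*$ precisely when $p>m$, combined with interpolation between $L^2$ and $L^{2^*}$ and then Poincar\'e plus Sobolev, gives the clean bound $\tilde C\,\|u\|_{L^{pN/2}}^{p-m}\|\nabla v\|_{L^2}^2$, which is absorbed directly without any splitting of the gradient term. Your version is more transparent (the role of Poincar\'e in lowering the threshold from $p>m+\frac2N$ to $p>m$ is visible in the single inequality $\beta>2\iff p>m$) and yields a simpler, though quantitatively different, smallness threshold $\tilde C\,\tilde\varepsilon_1^{\,p-m}<\frac{4m(q-1)}{(m+q-1)^2}$; both thresholds are admissible since the lemma only asserts existence of a sufficiently small $\tilde\varepsilon_1$. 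The concluding time-bootstrap (first $q=p\frac N2$ to propagate the smallness, then the given $q$, taking the minimum of the two thresholds) is the same as the paper's and, like the paper's, tacitly uses continuity of $t\mapsto\|u(t)\|_{L^{p\frac N2}(B_R)}$ in addition to continuity in $L^q$.
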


\begin{proof}
Since $u_0$ is bounded and $T_k$ is a bounded and Lipschitz function, by standard results, there exists a unique solution of problem \eqref{eq31} in the sense of Definition \ref{def31}. We now multiply both sides of the differential equation in problem \eqref{eq31} by $u^{q-1}$, therefore
$$
\int_{B_R} \,u_t\,u^{q-1}\,d\mu =\int_{B_R}  \Delta( u^m)\,u^{q-1} \,d\mu\,+ \int_{B_R}  T_k(u^p)\,u^{q-1}\,d\mu \,.
$$
We integrate by parts. This can be justified by standard tools, by an approximation procedure. Using the fact that $T(u^p)\le u^p$, we can write
\begin{equation}\label{eq73}
\begin{aligned}
\frac{1}{q}\frac{d}{dt}\int_{B_R} u^{q}\,d\mu &\le-m(q-1)\int_{B_R}  u^{m+q-3}\,|\nabla u|^2 \,d\mu\,+ \int_{B_R}  u^p\,u^{q-1}\,d\mu \,\\
&\le-\frac{4m(q-1)}{(m+q-1)^2}\int_{B_R} \left|\nabla \left(u^{\frac{m+q-1}{2}}\right)\right|^2 \,d\mu\,+ \int_{B_R}  u^{p+q-1}\,d\mu.
\end{aligned}
\end{equation}
Now we take $c_1>0$, $c_2>0$ such that $c_1+c_2=1$. Thus
\begin{equation}\label{eq73b}
\int_{B_R} \left|\nabla \left(u^{\frac{m+q-1}{2}}\right)\right|^2 \,d\mu = c_1\, \left\|\nabla \left(u^{\frac{m+q-1}{2}}\right)\right\|_{L^2(B_R)}^2 \, + c_2\, \left\|\nabla \left(u^{\frac{m+q-1}{2}}\right)\right\|_{L^2(B_R)}^2.
\end{equation}
Take any $\alpha\in (0,1).$ Thanks to \eqref{P}, \eqref{eq73b} becomes
\begin{equation}\label{eq74}
\begin{aligned}
\int_{B_R} \left|\nabla \left(u^{\frac{m+q-1}{2}}\right)\right|^2 \,d\mu& \ge  c_1\,C_p^2 \left\| u\right\|^{m+q-1}_{L^{m+q-1}(B_R)}\, + c_2\, \left\|\nabla \left(u^{\frac{m+q-1}{2}}\right)\right\|_{L^2(B_R)}^2 \\
&\ge c_1\,C_p^2 \left\| u\right\|^{m+q-1}_{L^{m+q-1}(B_R)}\, +c_2\, \left\|\nabla \left(u^{\frac{m+q-1}{2}}\right)\right\|_{L^2(B_R)}^{2+2\alpha-2\alpha}\\
&\ge c_1C_p^2 \left\| u\right\|^{m+q-1}_{L^{m+q-1}(B_R)}+ c_2C_p^{2\alpha} \left\| u\right\|^{\alpha(m+q-1)}_{L^{m+q-1}(B_R)} \left\|\nabla \left(u^{\frac{m+q-1}{2}}\right)\right\|_{L^2(B_R)}^{2-2\alpha}\,.
\end{aligned}
\end{equation}
Moreover, using the interpolation inequality, H\"older inequality and \eqref{S}, we have
\begin{equation}\label{eq75}
\begin{aligned}
\int_{B_R}  u^{p+q-1}\,d\mu,&=\|u\|_{L^{p+q-1}}^{p+q-1}\\
&\le \|u\|_{L^{m+q-1}(B_R)}^{\theta(p+q-1)}\,\|u\|_{L^{p+m+q-1}(B_R)}^{(1-\theta)(p+q-1)}\\
&\le \|u\|_{L^{m+q-1}(B_R)}^{\theta(p+q-1)}\,\left[\|u\|_{L^{p\frac N2}(B_R)}^{(1-\theta)\frac{p}{p+m+q-1}}\,\|u\|_{L^{(m+q-1)\frac {N}{N-2}}(B_R)}^{(1-\theta)\frac{m+q-1}{p+m+q-1}}\right]^{p+q-1}\\
&\le \|u\|_{L^{m+q-1}(B_R)}^{\theta(p+q-1)}\,\|u\|_{L^{p\frac N2}(B_R)}^{(1-\theta)\frac{p(p+q-1)}{p+m+q-1}}\,\left(\frac{1}{C_s}\left\|\nabla \left(u^{\frac{m+q-1}{2}}\right)\right\|_{L^2(B_R)}\right)^{2(1-\theta)\frac{p+q-1}{p+m+q-1}}
\end{aligned}
\end{equation}
where $\theta:=\frac{m(m+q-1)}{p(p+q-1)}$. By plugging \eqref{eq74} and \eqref{eq75} into \eqref{eq73} we obtain

\begin{equation}\label{eq75b}
\begin{aligned}
\frac{1}{q}\frac{d}{dt}\|u(t)\|_{L^q(B_R)}^{q}
& \le-\frac{4m(q-1)}{(m+q-1)^2}\, c_1\,C_p^2 \left\| u(t)\right\|^{m+q-1}_{L^{m+q-1}(B_R)}\, \\
& - \frac{4m(q-1)}{(m+q-1)^2}\, c_2\,C_p^{2\alpha} \left\| u(t)\right\|^{\alpha(m+q-1)}_{L^{m+q-1}(B_R)}\, \left\|\nabla \left(u^{\frac{m+q-1}{2}}\right)\right\|_{L^2(B_R)}^{2-2\alpha} \\
& +\tilde{C}\|u(t)\|_{L^{m+q-1}(B_R)}^{\theta(p+q-1)}\,\|u(t)\|_{L^{p\frac N2}(B_R)}^{(1-\theta)\frac{p(p+q-1)}{p+m+q-1}}\,\left\|\nabla \left( u^{\frac{m+q-1}{2}}\right)\right\|_{L^2(B_R)}^{2(1-\theta)\frac{p+q-1}{p+m+q-1}}
\end{aligned}
\end{equation}
where \begin{equation}\label{tildec}
\tilde{C}=\left(\frac{1}{C_s}\right)^{2(1-\theta)\frac{p+q-1}{p+m+q-1}}.
\end{equation} Let us now fix $\alpha\in (0,1)$ such that
$$
2-2\alpha=2(1-\theta)\left(\frac{p+q-1}{p+m+q-1}\right).
$$
Hence we have
\begin{equation}\label{eq76}
\alpha\,=\,\frac mp.
\end{equation}
By substituting \eqref{eq76} into \eqref{eq75b} we obtain
\begin{equation}\label{eq77}
\begin{aligned}
\frac{1}{q}\frac{d}{dt}\|u(t)\|_{L^q(B_R)}^{q} &\le -\frac{4m(q-1)}{(m+q-1)^2}\, c_1\,C_p^2 \left\| u(t)\right\|^{m+q-1}_{L^{m+q-1}(B_R)}\, \\
& -  \frac{1}{\tilde C}\left\{ \frac{4m(q-1)C}{(m+q-1)^2}\,  - \left\| u(t)\right\|^{\frac{p(p+q-1)-m(m+q-1)}{p+m+q-1}}_{L^{p\frac N2}(B_R)}\right\}  \\
&\times \left\| u(t)\right\|^{\alpha(m+q-1)}_{L^{m+q-1}(B_R)}\, \left\|\nabla \left(u^{\frac{m+q-1}{2}}\right)\right\|_{L^2(B_R)}^{2-2\alpha},
\end{aligned}
\end{equation}
where $C$ has been defined in Remark \ref{remark5}. Observe that, thanks to hypothesis \eqref{epsilon11a} and the continuity of the solution $u(t)$, there exists $t_0>0$ such that
$$
\left\| u(t)\right\|_{L^{p\frac N2}(B_R)}\le 2\, \tilde\varepsilon_1\,\,\,\,\,\text{for any}\,\,\,\, t\in (0,t_0]\,.
$$
Hence \eqref{eq77} becomes, for any $t\in (0,t_0]$
\begin{equation*}
\begin{aligned}
\frac{1}{q}\frac{d}{dt}\|u(t)\|_{L^q(B_R)}^{q} &\le -\frac{4m(q-1)}{(m+q-1)^2}\, c_1\,C_p^2 \left\| u(t)\right\|^{m+q-1}_{L^{m+q-1}(B_R)}\, \\
& -  \frac{1}{\tilde C}\left\{ \frac{4m(q-1)C}{(m+q-1)^2}\,  -2\tilde \varepsilon_1^{{\frac{p(p+q-1)-m(m+q-1)}{p+m+q-1}}}\right\}  \left\| u(t)\right\|^{\alpha(m+q-1)}_{L^{m+q-1}(B_R)}\, \left\|\nabla \left(u^{\frac{m+q-1}{2}}\right)\right\|_{L^2(B_R)}^{2-2\alpha}\\
&\le 0\,,
\end{aligned}
\end{equation*}
 provided $\tilde\varepsilon_1$ is small enough.
Hence we have proved that $\|u(t)\|_{L^q(B_R)}$ is decreasing in time for any $t\in (0,t_0]$, i.e.
\begin{equation}\label{eq78}
\|u(t)\|_{L^q(B_R)}\le \|u_0\|_{L^q(B_R)}\quad \text{for any} \,\,\,t\in (0,t_0]\,.
\end{equation}
In particular, inequality \eqref{eq78} holds $q=p\frac N2$. Hence we have
$$
\|u(t)\|_{L^{p\frac N2}(B_R)}\le \|u_0\|_{L^{p\frac N2}(B_R)}\,<\,\tilde\varepsilon_1\quad \text{for any} \,\,\,\,t\in (0,t_0]\,.
$$
Now, we can repeat the same argument in the time interval $(t_0, t_1]$ where $t_1$ is chosen, thanks to the continuity of $u(t)$, in such a way that
$$
\left\| u(t)\right\|
\le 2\, \tilde\varepsilon_1\,\,\,\,\,\text{for any}\,\,\, t\in (t_0,t_1]\,.
$$
Thus we get
\begin{equation*}
\|u(t)\|_{L^q(B_R)}\le \|u_0\|_{L^q(B_R)}\quad \text{for any} \,\,\,t\in (0,t_1]\,.
\end{equation*}
Iterating this procedure we obtain the thesis.

\end{proof}

\begin{proposition}\label{prop71}
Let $m>1$, $p>m$, $R>0$, $u_0\in  L^{\infty}(B_R)$, $u_0\ge 0$. Let
\begin{equation}\label{eq710}
r>\, \frac N2, \quad\quad\quad s=1+\frac 2N-\frac 1r.
\end{equation}
Suppose that \eqref{epsilon11} holds for $\varepsilon_1=\varepsilon_1(p, m, N, r, C_s, C_p)$ sufficiently small.
Let $u$ be the solution to problem \eqref{eq31}. Let $M$ support the Sobolev and Poincaré inequalities \eqref{S} and \eqref{P}.
Then there exists $\Gamma=\Gamma(N,m,l,C_s)>0$ independent of $T$ such that, for all $t>0$,
\begin{equation}\label{eq711}
\|u(t)\|_{L^{\infty}(B_R)}
\le \Gamma\, \|u_0\|_{L^{m}(B_R)}^{\frac{s-1}{s}}\left[ \|u_0\|_{L^{pr}(B_R)}^{p}+\frac{1}{(m-1)t}\|u_0\|_{L^{r}(B_R)}\right]^{\frac{1}{ms}}.
\end{equation}
\end{proposition}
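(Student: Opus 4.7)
The plan is to run the argument of Proposition~\ref{prop44} essentially unchanged, except that the role of the smoothing estimate Proposition~\ref{prop42} is now played by the $L^q$-contractivity of Lemma~\ref{lemma71}. This substitution is exactly what allows us to weaken the restriction $p>m+\frac{2}{N}$ to $p>m$, at the cost of assuming in addition the Poincar\'e inequality~\eqref{P}.

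First, fix $t>0$ and set $w:=u(\cdot,t)$. By the Aronson--Benilan type estimate of Proposition~\ref{prop43},
\[
-\Delta w^{m}\;\leq\;w^{p}+\frac{w}{(m-1)t}\qquad\text{in }\mathfrak{D}'(B_R).
\]
Since $w^{m}(\cdot,t)\in H_{0}^{1}(B_R)$ for a.e.\ $t>0$, the right-hand side belongs to $L^{r}(B_R)$ (as $w$ is bounded and $B_R$ has finite measure), and $r>\tfrac{N}{2}$, the Stampacchia-type estimate of Proposition~\ref{prop1} applies with $v=w^{m}$ and $f=w^{p}+w/((m-1)t)$, yielding
\[
\|w\|_{L^{\infty}(B_R)}^{m}\;\leq\;\frac{s}{s-1}\,C_s^{-2/s}\,\Bigl\|w^{p}+\tfrac{w}{(m-1)t}\Bigr\|_{L^{r}(B_R)}^{1/s}\|w\|_{L^{m}(B_R)}^{m(s-1)/s}.
\]
A Minkowski split of the $L^{r}$-norm, combined with the identity $\|w^{p}\|_{L^{r}}=\|w\|_{L^{pr}}^{p}$, then replaces the bracketed term by $\|w\|_{L^{pr}(B_R)}^{p}+\tfrac{1}{(m-1)t}\|w\|_{L^{r}(B_R)}$ raised to $1/s$.

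The next step is to trade the three remaining norms of $w(t)$ for norms of $u_{0}$. Choosing the smallness parameter in \eqref{epsilon11} as $\varepsilon_{1}:=\min_{q\in\{m,r,pr\}}\tilde\varepsilon_{1}(q)$, with $\tilde\varepsilon_{1}$ as in Lemma~\ref{lemma71} (cf.\ Remark~\ref{remark5}), ensures that Lemma~\ref{lemma71} is applicable at the three exponents $q\in\{m,r,pr\}$ simultaneously, giving $\|w(t)\|_{L^{q}(B_R)}\leq\|u_{0}\|_{L^{q}(B_R)}$ for each of these $q$. Substituting these contractivity bounds into the previous display and absorbing $\bigl[\tfrac{s}{s-1}C_s^{-2/s}\bigr]^{1/m}$ into a constant $\Gamma$ depending only on $N,m,p,r,C_s$ yields exactly~\eqref{eq711}.

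The only real verification is that the solution $u$ of~\eqref{eq31} satisfies the continuity requirement $u\in C([0,T);L^{q}(B_R))$ demanded by Lemma~\ref{lemma71} at each of the three chosen exponents; this is standard given that the initial datum is bounded and that $T_{k}(u^{p})$ is globally Lipschitz. Everything else is a mechanical chaining of Proposition~\ref{prop43}, Proposition~\ref{prop1}, and the parabolic monotonicity Lemma~\ref{lemma71}: in particular, no Moser iteration is needed here, because the Poincar\'e inequality already makes the relevant $L^{q}$ norms nonincreasing without an intermediate smoothing step.
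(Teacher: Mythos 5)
Your proposal is correct and follows essentially the same route as the paper's own proof: Aronson--Benilan (Proposition \ref{prop43}), the Stampacchia-type elliptic bound of Proposition \ref{prop1} with $r>\frac N2$, and then Lemma \ref{lemma71} applied at the exponents $q\in\{m,r,pr\}$ to replace the norms of $w(t)$ by those of $u_0$, with $\Gamma=\bigl[\tfrac{s}{s-1}C_s^{-2/s}\bigr]^{1/m}$. The only cosmetic difference is that the paper's Remark \ref{remark5} also includes $q=p$ in the minimum defining $\varepsilon_1$ (needed later in the proof of Theorem \ref{teo71}, not in this proposition), so your choice of smallness parameter is adequate here.
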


\begin{remark}
\rm If in Proposition \ref{prop71}, in addition, we assume that for some $k_0>0$
$$
\max\left\{\|u_0\|_{L^m(B_R)};\,\,\|u_0\|_{L^{pr}(B_R)}; \,\,\|u_0\|_{L^{r}(B_R)}\right\}\leq k_0\quad \textrm{ for every }\,\, R>0\,,
$$
then the bound from above for $\|u(t)\|_{L^{\infty}(B_R)}$ in \eqref{eq711} is independent of $R$.
\end{remark}

\begin{proof}[Proof of Proposition \ref{prop71}]
Let us set $w=u(\cdot,t)$. Observe that $w^m\in H_0^1(B_R)$ and $w\ge0$. Due to Proposition \ref{prop43} we know that
\begin{equation*}
-\Delta(w^m) \le \left [w^p+\frac{w}{(m-1)t} \right].
\end{equation*}
Observe that, since $u_0\in L^{\infty}(B_R)$ also $w\in L^{\infty}(B_R)$.
Due to \eqref{eq710}, we can apply Proposition \ref{prop1}, so we have that
$$
\|w\|_{L^{\infty}(B_R)}^m\le \frac{s}{s-1}\left(\frac{1}{C_s}\right)^{\frac {2}{s}} \left\|w^p+\frac{w}{(m-1)t} \right\|_{L^{r}(B_R)}^{\frac 1s}\|w^m\|_{L^{1}(B_R)}^{\frac{s-1}{s}}
$$
Therefore
\begin{equation}\label{eq712}
\|w\|_{L^{\infty}(B_R)}^m\le \frac{s}{s-1} \left(\frac{1}{C_s}\right)^{\frac {2}{s}}\left\{\|w^p\|_{L^{r}(B_R)}+\frac{1}{(m-1)t} \|w\|_{L^{r}(B_R)}\right\}^{\frac 1s}\|w\|_{L^{m}(B_R)}^{m\frac{s-1}{s}},
\end{equation}
where $s$ has been defined in \eqref{eq710}. In view of \eqref{epsilon11} with a suitable $\varepsilon_1$, 
 since $u_0\in L^{\infty}(B_R)$, we can apply Lemma \ref{lemma71}.
Hence we obtain
\begin{equation}\label{eq713}
\|w^p\|_{L^{r}(B_R)} = \left\|w\right\|^p_{L^{pr}(B_R)} \le \|u_0\|_{L^{pr}(B_R)}^{p}.
\end{equation}
Similarly, again for an appropriate $\varepsilon_1$ in \eqref{epsilon11}, since $u_0\in L^{\infty}(B_R)$, we can apply Lemma \ref{lemma71}
and obtain
\begin{equation}\label{eq714}
\|w\|_{L^{r}(B_R)} \le \|u_0\|_{L^{r}(B_R)}.
\end{equation}
Plugging \eqref{eq713} and \eqref{eq714} into \eqref{eq712} we obtain
$$
\begin{aligned}
\|w\|^m_{L^{\infty}(B_R)} &\le \frac{s}{s-1}\left(\frac{1}{C_s}\right)^{\frac {2}{s}} \left\{\|w\|^p_{L^{pr}(B_R)}+\frac{1}{(m-1)t}\|w\|_{L^{r}(B_R)} \right\}^{\frac 1s} \|w\|_{L^{m}(B_R)}^{m\frac{s-1}{s}}\\
&\le\frac{s}{s-1}\left(\frac{1}{C_s}\right)^{\frac {2}{s}}\left\{\|u_0\|_{L^{pr}(B_R)}^{p}+\frac{1}{(m-1)t}\|u_0\|_{L^{r}(B_R)} \right\}^{\frac{1}{s}} \|w\|_{L^{m}(B_R)}^{m\frac{s-1}{s}}.
\end{aligned}
$$
Moreover, since $u_0\in L^{\infty}(B_R)$, we can apply Lemma \ref{lemma71} to $w$ with $q=m$. Thus from \eqref{eq72} with $q=m$ we get
\begin{equation}\label{eq715}
\|w\|_{L^{\infty}(B_R)}
\le \left[\frac{s}{s-1}\left(\frac{1}{C_s}\right)^{\frac {2}{s}}\right]^{\frac 1m}\|u_0\|_{L^{m}(B_R)}^{\frac{s-1}{s}}\left[ \|u_0\|_{L^{pr}(B_R)}^{p}+\frac{1}{(m-1)t}\|u_0\|_{L^{r}(B_R)}\right]^{\frac{1}{ms}}.
\end{equation}
We define
\begin{equation}\label{gamma}
\Gamma:=\left[\frac{s}{s-1}\left(\frac{1}{C_s}\right)^{\frac {2}{s}}\right]^{\frac 1m}.
\end{equation}
Then from \eqref{eq715} we get
$$
\|w\|_{L^{\infty}(B_R)}
\le \Gamma\|u_0\|_{L^{m}(B_R)}^{\frac{s-1}{s}}\left[ \|u_0\|_{L^{pr}(B_R)}^{p}+\frac{1}{(m-1)t}\|u_0\|_{L^{r}(B_R)}\right]^{\frac{1}{ms}}.
$$
\end{proof}

\begin{proof}[Proof of Theorem \ref{teo71}] The proof of Theorem \ref{teo71} follows the same line of arguments of that of Theorem \ref{teo22}, with minor differences. Let $\{u_{0,h}\}_{h\ge 0}$ be a family of functions such that
\begin{equation*}
\begin{aligned}
&(a)\,\,u_{0,h}\in L^{\infty}(M)\cap C_c^{\infty}(M) \,\,\,\text{for all} \,\,h\ge 0, \\
&(b)\,\,u_{0,h}\ge 0 \,\,\,\text{for all} \,\,h\ge 0, \\
&(c)\,\,u_{0, h_1}\leq u_{0, h_2}\,\,\,\text{for any } h_1<h_2,  \\
&(d)\,\,u_{0,h}\longrightarrow u_0 \,\,\, \text{in}\,\, L^{\theta}(M)\cap L^{pr}(M)\,\,\text{where}\,\,\theta:=\min\{m,r\}\quad \textrm{ as }\, h\to +\infty\,,\\
\end{aligned}
\end{equation*}
Observe that, due to assumptions $(c)$ and $(d)$, $u_{0,h}$ satisfies \eqref{epsilon11} for an appropriate $\varepsilon_1$ sufficiently small.
Moreover, thanks by interpolation, since $m<p<pr$, we have
$$
\,\,u_{0,h}\longrightarrow u_0 \,\,\, \text{in}\,\, L^p(M)\quad \textrm{ as }\, h\to +\infty\,.
$$
For any $R>0$, $k>0$, $h>0$, consider the problem
\begin{equation}\label{eq722}
\begin{cases}
u_t= \Delta u^m +T_k(u^p) &\text{in}\,\, B_R\times (0,+\infty)\\
u=0& \text{in}\,\, \partial B_R\times (0,\infty)\\
u=u_{0,h} &\text{in}\,\, B_R\times \{0\}\,. \\
\end{cases}
\end{equation}
From standard results it follows that problem \eqref{eq722} has a solution $u_{h,k}^R$ in the sense of Definition \ref{def31}; moreover, $u^R_{h,k}\in C\big([0, T]; L^q(B_R)\big)$ for any $q>1$. Hence, it satisfies the inequalities in Lemma \ref{lemma71} and in Proposition \ref{prop71}, i.e., for any $t\in(0,+\infty)$,
\begin{equation*}
\|u_{h,k}^R(t)\|_{L^m(B_R)}\,\le\, \|u_{0,h}\|_{L^m(B_R)};
\end{equation*}
\begin{equation*}
\|u_{h,k}^R(t)\|_{L^p(B_R)}\,\le\, \|u_{0,h}\|_{L^p(B_R)};
\end{equation*}
\begin{equation*}
\|u_{h,k}^R\|_{L^{\infty}(B_R)}
\le \Gamma\,\|u_{0,h}\|_{L^{m}(B_R)}^{\frac{s-1}{s}}\left[ \|u_{0,h}\|_{L^{pr}(B_R)}^{p}+\frac{1}{(m-1)t}\|u_{0,h}\|_{L^{r}(B_R)}\right]^{\frac{1}{ms}},
\end{equation*}
with $r$ and $s$ as in \eqref{eq710} and $\Gamma$ as in \eqref{gamma}. Arguing as in the proof of Theorem \eqref{eq22}, we can pass to the limit as $k\to +\infty, R\to +\infty, h\to \infty$ obtaining a function $u$, which satisfies
\begin{equation}\label{eq733}
\|u(t)\|_{L^m(M)}\leq  \|u_{0}\|_{L^m(M)},
\end{equation}
\begin{equation}\label{eq733b}
\|u(t)\|_{L^p(M)}\leq  \|u_{0}\|_{L^p(M)},
\end{equation}
and
\begin{equation}\label{eq734}
\|u\|_{L^{\infty}(M)}
\le \Gamma\,\|u_{0}\|_{L^{m}(M)}^{\frac{s-1}{s}}\left[ \|u_{0}\|_{L^{pr}(M)}^{p}+\frac{1}{(m-1)t}\|u_{0}\|_{L^{r}(M)}\right]^{\frac{1}{ms}},
\end{equation}
with $r$ and $s$ as in \eqref{eq710} and $\Gamma$ as in \eqref{gamma}.
Moreover, for any $\varphi \in C_c^{\infty}(M\times(0,T))$ such that $\varphi(x,T)=0$ for any $x\in M$, the function $u$ satisfies
\begin{equation}\label{eq735}
\begin{aligned}
-\int_0^T\int_{M} u\,\varphi_t\,d\mu\,dt =&\int_0^T\int_{M} u^m\,\Delta\varphi\,d\mu\,dt+ \int_0^T\int_{M} u^p\,\varphi\,d\mu\,dt \\
& +\int_{M} u_{0}(x)\,\varphi(x,0)\,d\mu.
\end{aligned}
\end{equation}
Observe that, due to inequalities \eqref{eq733}, \eqref{eq733b} and \eqref{eq734}, all the integrals in \eqref{eq735} are finite, hence $u$ is a solution to problem \eqref{problema} in the sense of Definition \ref{def21}.  Finally,  using hypothesis \eqref{epsilon2a}, inequality \eqref{eq721} can be derived exactly as \eqref{eq22}.
\end{proof}

\section{Proofs of Theorems \ref{teo24} and \ref{teo71W}}\label{weight}
We use the following Aronson-Benilan type estimate (see \cite{AB}; see also \cite[Proposition 2.3]{Sacks}); it can be shown exactly as Proposition \ref{prop43}.
\begin{proposition}\label{prop62}
Let $m>1$, $p>m$, $u_0\in H_0^1(B_R) \cap L^{\infty}(B_R)$, $u_0\ge 0$. Let $u$ be the solution to problem \eqref{eq61}. Then, for a.e. $t\in(0,T)$,
\begin{equation*}
-\Delta u^m(\cdot,t) \le \rho u^p(\cdot, t)+ \frac{\rho}{(m-1)t} u(\cdot,t) \quad \text{in}\,\,\,\mathfrak{D}'(B_R).
\end{equation*}
\end{proposition}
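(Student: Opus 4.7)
The plan is to follow verbatim the argument of Proposition \ref{prop43}, the only novelty being the presence of the density $\rho$. The evolution equation in problem \eqref{eq61} reads
\begin{equation*}
\rho\, u_t \;=\; \Delta u^m + \rho\, T_k(u^p),
\end{equation*}
so if I can establish the pointwise lower bound
\begin{equation*}
u_t(\cdot,t) + \frac{u(\cdot,t)}{(m-1)\,t} \;\ge\; 0 \qquad \text{in}\,\,\mathfrak{D}'(B_R),
\end{equation*}
then multiplying by $\rho$ and invoking $T_k(u^p)\le u^p$ immediately yields the desired
\begin{equation*}
-\Delta u^m \;\le\; \rho\, u^p + \frac{\rho\,u}{(m-1)\,t}.
\end{equation*}

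To prove the lower bound on $u_t$, I would mimic Sacks' argument: set $z := u_t + \tfrac{u}{m-1}$ as in Proposition \ref{prop43} and linearize the equation around $u$. Because of the weight on the left-hand side, the natural linearized operator becomes
\[
L\phi \;=\; \frac{1}{\rho}\,\Delta\!\bigl(m\,u^{m-1}\phi\bigr) + m\,u^{p-1}\,\phi,
\]
the factor $1/\rho$ being the only structural change from the unweighted case. A direct differentiation in $t$ of the equation produces $z_t - Lz \ge 0$ in $B_R\times(0,T)$, and $z$ is nonnegative on the parabolic boundary since $u=0$ on $\partial B_R$ and $u_0\ge 0$. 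The parabolic comparison principle applied to the linear operator $\partial_t - L$ then gives $z\ge 0$, which, through the standard time-scaling argument underlying the Aronson--B\'enilan estimate, upgrades to the bound with $\tfrac{1}{(m-1)t}$ in place of $\tfrac{1}{m-1}$.

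The main technical point, as in Proposition \ref{prop43}, is the rigorous justification of the formal time-differentiation: the linearization involves the coefficients $u^{m-1}$ and $u^{p-1}$, which may degenerate or blow up where $u$ vanishes. The standard remedy is to first regularize the Cauchy--Dirichlet problem (for instance replacing $u$ by $u+\varepsilon$ and smoothing $T_k$), establish the estimate for the regularized solutions, where the comparison principle applies in its classical form, and then pass to the limit $\varepsilon\to 0^+$ by monotonicity. Since $\rho\in C(\mathbb R^N)\cap L^\infty(\mathbb R^N)$ is strictly positive by \eqref{rho2}, it neither degenerates the principal part nor obstructs any integration by parts; it is simply carried as a positive multiplicative factor throughout the computation, confirming that the weighted case requires only the minor adaptations claimed in the statement.
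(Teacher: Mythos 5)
Your proposal is correct and follows essentially the same route as the paper, which proves Proposition \ref{prop62} exactly as Proposition \ref{prop43}: one sets $z=u_t+\tfrac{u}{m-1}$, verifies $z\ge 0$ on the parabolic boundary and $z_t-Lz\ge 0$ for the linearized operator (here carrying the extra factor $1/\rho$ in front of the second-order term, exactly as you write), and concludes by the comparison principle as in Sacks, using $T_k(u^p)\le u^p$ at the end. Your additional observation that the estimate reduces to the $\rho$-free pointwise bound on $u_t$, after which one simply multiplies by the positive bounded weight, is a clean way of packaging the same argument.
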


\smallskip

For any $R>0$, consider the following approximate problem
\begin{equation}\label{eq61}
\begin{cases}
\, \rho(x) u_t= \Delta u^m +\, \rho(x) u^p & \text{in}\,\, B_R\times (0,T) \\
u=0 &\text{in}\,\, \partial B_R\times (0,T)\\
u =u_0 &\text{in}\,\, B_R\times \{0\}\,,
\end{cases}
\end{equation}
where $B_R$ denotes the Euclidean ball with radius $R$ and centre in the origin $O$.

We exploit the following estimate, which can be proved as that in Lemma \ref{lemma41}.
\begin{lemma}\label{lemma63}
Let
$$m>1,\quad\quad p>m+\frac{2}{N}.$$
Suppose that inequality \eqref{S-pesi} holds. Suppose that $u_0\in L^{\infty}(B_R)$, $u_0\ge0$. Let $1<q<\infty$, $p_0$ be as in \eqref{p0} and assume that
\begin{equation*}
\|u_0\|_{\textrm L^{p_0}(B_R)}\,<\,\bar\varepsilon,
\end{equation*}
for $\bar\varepsilon=\bar\varepsilon(p, m, C_s, q)$ small enough. Let $u$ be the solution of problem \eqref{eq61}. Then
\begin{equation*}
\|u(t)\|_{L^q_{\rho}(B_R)} \le \|u_0\|_{L^q_{\rho}(B_R)}\quad \textrm{ for all }\,\, t>0\,.
\end{equation*}
\end{lemma}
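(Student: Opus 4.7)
\medskip

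The strategy is to mirror the proof of Lemma \ref{lemma41}, with the weight $\rho$ absorbed appropriately. Since $u_0\in L^\infty(B_R)$ and $\rho$ is continuous, strictly positive and bounded, standard theory gives a unique bounded solution $u$ of \eqref{eq61} with $u\in C([0,T);L^q_\rho(B_R))$ for every $q\in(1,\infty)$. I would multiply both sides of $\rho u_t=\Delta u^m+\rho u^p$ by $u^{q-1}$ and integrate over $B_R$. After integration by parts (justified by the standard approximation argument as in Lemma \ref{lemma41}), this yields
\[
\frac{1}{q}\frac{d}{dt}\int_{B_R} u^q\,\rho\,dx \;=\; -\,\frac{4m(q-1)}{(m+q-1)^2}\int_{B_R}\bigl|\nabla\bigl(u^{\frac{m+q-1}{2}}\bigr)\bigr|^2\,dx\;+\;\int_{B_R} u^{p+q-1}\,\rho\,dx.
\]

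Next, I would estimate the two right-hand side terms using the \emph{weighted} Sobolev inequality \eqref{S-pesi} (which crucially has the weight only on the left-hand side $L^{2^\ast}_\rho$ norm, while the gradient term is unweighted, matching precisely the identity above). Setting $v=u^{(m+q-1)/2}$, the diffusion term is bounded below by $\frac{4}{(m+q-1)^2}C_s^2\,\|u\|_{L^{(m+q-1)N/(N-2)}_\rho(B_R)}^{m+q-1}$. For the reaction term, I would apply H\"older's inequality with respect to the measure $\rho\,dx$, with exponents $N/2$ and $N/(N-2)$, writing $u^{p+q-1}=u^{p-m}\cdot u^{m+q-1}$, so that
\[
\int_{B_R} u^{p+q-1}\rho\,dx \;\le\; \|u\|_{L^{p_0}_\rho(B_R)}^{p-m}\,\|u\|_{L^{(m+q-1)N/(N-2)}_\rho(B_R)}^{m+q-1},
\]
where $p_0=(p-m)N/2$. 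Combining, one obtains the key differential inequality
\[
\frac{1}{q}\frac{d}{dt}\|u(t)\|_{L^q_\rho(B_R)}^q \;\le\; -\Bigl[\tfrac{4m(q-1)}{(m+q-1)^2}C_s^2-\|u(t)\|_{L^{p_0}_\rho(B_R)}^{p-m}\Bigr]\|u(t)\|_{L^{(m+q-1)N/(N-2)}_\rho(B_R)}^{m+q-1}.
\]

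At this point, one concludes by a bootstrap argument identical to that in Lemma \ref{lemma41}. Choose $\bar\varepsilon=\bar\varepsilon(p,m,C_s,q)$ so small that $(2\bar\varepsilon)^{p-m}<\frac{2m(q-1)}{(m+q-1)^2}C_s^2$ for both $q$ and $p_0$ (and, if needed, absorb the factor $\|\rho\|_\infty^{1/p_0}$ to pass from the unweighted to weighted $L^{p_0}$ norm as stated). By continuity of $t\mapsto u(t)$ in $L^{p_0}_\rho(B_R)$, the smallness $\|u_0\|_{L^{p_0}_\rho}<\bar\varepsilon$ persists with the bound $2\bar\varepsilon$ on an interval $[0,t_0]$; the bracket above is then strictly positive there, so $t\mapsto\|u(t)\|_{L^q_\rho(B_R)}$ is nonincreasing on $[0,t_0]$. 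Specializing to $q=p_0$ shows $\|u(t_0)\|_{L^{p_0}_\rho}<\bar\varepsilon$ again, and one iterates on $[t_0,t_1], [t_1,t_2],\ldots$ to cover $[0,T]$ for any $T>0$.

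The only non-routine point is the bootstrap: one must arrange $\bar\varepsilon$ so that the same threshold works simultaneously for the exponent $q$ under consideration \emph{and} for $p_0$ (used to propagate the smallness). This is achieved exactly as in Remark \ref{remark4}, by taking the minimum over the relevant coefficients $\frac{2m(q'-1)}{(m+q'-1)^2}C_s^2$ for $q'\in\{p_0,q\}$. All other steps are direct weighted analogues of the unweighted case.
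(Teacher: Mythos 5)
Your proposal is correct and follows exactly the route the paper intends: the paper proves Lemma \ref{lemma63} simply by asserting that it ``can be proved as that in Lemma \ref{lemma41}'', and your argument is precisely that proof transposed to the weighted setting, with the weighted Sobolev inequality \eqref{S-pesi} replacing \eqref{S}, H\"older taken with respect to $\rho\,dx$, and the same two-exponent bootstrap on $q$ and $p_0$. The only detail you add beyond the paper (correctly) is the observation that boundedness of $\rho$ lets one pass between the unweighted smallness hypothesis and the weighted norms in the conclusion.
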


The following smoothing estimate is also used; the proof is the same as that of Proposition \ref{prop42}.
\begin{proposition}\label{prop61}
Let
$$m>1,\quad\quad p>m+\frac{2}{N},$$
Assume \eqref{rho2} and \eqref{S-pesi}. Suppose that $u_0\in L^{\infty}(B_R)$, $u_0\ge0$. Let $u$ be the solution of problem \eqref{eq61}. 
Assume that \eqref{epsilon0} holds for $\varepsilon_0=\varepsilon_0(p, m, N, r, C_s)$ sufficiently small. Than there exists $C(m,q_0,C_s, \varepsilon, N, q)>0$ such that
\begin{equation*}
\|u(t)\|_{L^q_{\rho}(B_R)} \le C\,t^{-\gamma_q}\|u_0\|^{\delta_q}_{L^{q_0}_{\rho}(B_R)}\quad \textrm{ for all }\,\, t>0\,,
\end{equation*}
where
\begin{equation*}
\gamma_q=\left(\frac{1}{q_0}-\frac{1}{q}\right)\frac{N\,q_0}{2\,q_0+N(m-1)}\,;\quad \delta_q=\frac{q_0}{q}\left(\frac{q+\frac{N}{2}(m-1)}{q_0+\frac{N}{2}(m-1)}\right)\,.
\end{equation*}
\end{proposition}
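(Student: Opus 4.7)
The plan is to mimic step-by-step the Moser iteration carried out in the proof of Proposition \ref{prop42}, replacing the unweighted Lebesgue norms with the $\rho$-weighted ones and the Sobolev inequality \eqref{S} with its weighted counterpart \eqref{S-pesi}. The crucial structural observation that makes the argument go through is that multiplying the equation $\rho u_t=\Delta u^m+\rho u^p$ by $u^{q-1}$ naturally produces weighted norms on the parabolic side and an unweighted Dirichlet integral on the elliptic side, which is exactly the form of the weighted Sobolev inequality \eqref{S-pesi}.

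First I would multiply the PDE in \eqref{eq61} by $u^{q-1}$, integrate over $B_R$, and perform integration by parts (justified by approximation since $u_0\in L^\infty(B_R)$ and $\rho$ is bounded), obtaining
\[
\frac{1}{q}\frac{d}{dt}\int_{B_R}u^{q}\rho\,dx
=-\frac{4m(q-1)}{(m+q-1)^2}\int_{B_R}\bigl|\nabla\bigl(u^{\frac{m+q-1}{2}}\bigr)\bigr|^2\,dx
+\int_{B_R}u^{p+q-1}\rho\,dx.
\]
The weighted Sobolev inequality \eqref{S-pesi} applied to $v=u^{(m+q-1)/2}\in C_c^\infty$-approximable functions then gives
\[
\bigl\|u^{(m+q-1)/2}\bigr\|_{L^{2^*}_\rho(B_R)}^{2}\le \frac{1}{C_s^2}\bigl\|\nabla\bigl(u^{(m+q-1)/2}\bigr)\bigr\|_{L^2(B_R)}^{2},
\]
that is, a lower bound on the gradient term in terms of $\|u\|_{L^{(m+q-1)N/(N-2)}_\rho(B_R)}^{m+q-1}$. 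For the reaction term, writing $u^{p+q-1}\rho=\bigl(u^{p-m}\bigr)\bigl(u^{m+q-1}\rho\bigr)$ and applying weighted H\"older with exponents $N/2$ and $N/(N-2)$ yields
\[
\int_{B_R} u^{p+q-1}\rho\,dx\le \|u(t)\|_{L^{p_0}_\rho(B_R)}^{p-m}\,\|u(t)\|_{L^{(m+q-1)\frac{N}{N-2}}_\rho(B_R)}^{m+q-1}.
\]
Combining these two bounds, the smallness hypothesis \eqref{epsilon0} on $\|u_0\|_{L^{p_0}_\rho}$ together with Lemma \ref{lemma63} (which guarantees that $\|u(t)\|_{L^{p_0}_\rho}$ stays small for all $t>0$) allows us to absorb the reaction term into the diffusion term, yielding the key differential inequality
\[
\frac{1}{q}\frac{d}{dt}\|u(t)\|_{L^q_\rho(B_R)}^{q}
\le -c(q)\,\|u(t)\|_{L^{(m+q-1)N/(N-2)}_\rho(B_R)}^{m+q-1},
\]
with $c(q)>0$ provided $\bar\varepsilon$ is small enough.

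With this inequality in hand I would then run the Moser iteration in the form of Proposition \ref{prop42}. Define the sequence $q_n=\frac{N}{N-2}(m+q_{n-1}-1)$, split the time interval $(0,t)$ into $\bar n$ sub-intervals $(t_{n-1},t_n)$ with $t_n=(2^n-1)s$, $s=t/(2^{\bar n}-1)$, and integrate the differential inequality on each sub-interval to obtain the iterative bound
\[
\|u(t_n)\|_{L^{q_n}_\rho(B_R)}\le \Bigl(\frac{(2^{\bar n}-1)d_{n-1}}{2^{n-1}}\Bigr)^{\!\frac{N}{(N-2)q_n}}t^{-\frac{N}{(N-2)q_n}}\|u(t_{n-1})\|_{L^{q_{n-1}}_\rho(B_R)}^{\frac{q_{n-1}}{q_n}\frac{N}{N-2}}.
\]
Composing these $\bar n$ inequalities and verifying, exactly as in \eqref{eq415}--\eqref{eq418}, that the product of constants is bounded independently of $\bar n$ gives the smoothing bound from $L^{q_0}_\rho$ to $L^{q_{\bar n}}_\rho$. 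Finally, for a general target exponent $q\ge q_0$ I would choose $\bar n$ with $q_{\bar n}\ge q$ and interpolate, using Lemma \ref{lemma63} to control the $L^{q_0}_\rho$ norm, obtaining the explicit exponents $\gamma_q$ and $\delta_q$ of the statement.

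The only place where the weighted setting is not literally identical to the unweighted one is in the Sobolev/H\"older step, and this is precisely where the assumption \eqref{S-pesi} (with $\rho$-weighted $L^{2^*}$ on the left and unweighted $L^2$ of the gradient on the right) does all the work. Once that step is absorbed, the rest of the argument, including the absorption of the reaction term and the Moser iteration, is purely algebraic and proceeds verbatim as in Propositions \ref{prop42} and Lemma \ref{lemma41}. Hence the hardest part is really just the correct bookkeeping of the weight in the H\"older step for the nonlinear source; the abstract functional-analytic scheme is then identical.
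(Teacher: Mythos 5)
Your proposal is correct and matches the paper's approach exactly: the paper's own "proof" of Proposition \ref{prop61} is simply the remark that it is identical to that of Proposition \ref{prop42}, with Lemma \ref{lemma63} replacing Lemma \ref{lemma41}. Your identification of the one genuinely new point --- that multiplying $\rho u_t=\Delta u^m+\rho u^p$ by $u^{q-1}$ produces weighted norms on the parabolic and reaction sides but an unweighted Dirichlet integral, which is precisely the mixed form of \eqref{S-pesi} --- is the correct reason the unweighted argument transfers verbatim.
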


\begin{proof}[Proof of Theorem \ref{teo24}] The conclusion follows by repeating the same arguments as in the proof of Theorem \ref{teo22}.  We use Lemma \ref{lemma63} instead of Lemma \ref{lemma41}, Proposition \ref{prop61} instead of \ref{prop42} and Proposition \ref{prop62} instead of Proposition \ref{prop43}.
\end{proof}

\subsection{Proof of Theorem \ref{teo71W}}
We consider problem \eqref{eq61}. We use the following estimate, which can be proved as that in Lemma \ref{lemma71}.

\begin{lemma}\label{lemma71W}
Let
$$m>1,\quad\quad p>m.$$
Assume that \eqref{S-pesi} and \eqref{P-pesi} hold. Suppose that $u_0\in L^{\infty}(B_R)$, $u_0\ge0$. Let $1<q<\infty$ and assume that
and assume that
\begin{equation*}
\|u_0\|_{L^{p\frac N2}(B_R)}<\tilde\varepsilon_1
\end{equation*}
for a suitable $\tilde\varepsilon_1=\tilde \varepsilon_1(p, m, N, C_p, C_s, q)$ sufficiently small.  Let $u$ be the solution of problem \eqref{eq61}. Then
\begin{equation*}
\|u(t)\|_{L^q(B_R)} \le \|u_0\|_{L^q(B_R)}\quad \textrm{ for all }\,\, t>0\,.
\end{equation*}
\end{lemma}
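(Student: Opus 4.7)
The plan is to mirror the proof of Lemma \ref{lemma71} in the weighted Euclidean setting, exploiting the fact that in \eqref{eq61} the weight $\rho$ multiplies both the time derivative and the source term but \emph{not} the diffusion: this is precisely the structural feature that makes the unweighted Sobolev and the weighted Poincaré inequalities \eqref{S-pesi}, \eqref{P-pesi} fit together in an energy estimate for the weighted $L^q_\rho$ norm. The norms in the statement should be read in the weighted sense (i.e., $L^{pN/2}_\rho$ and $L^q_\rho$), consistently with the rest of Section \ref{weight}.

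First I would multiply the PDE in \eqref{eq61} by $u^{q-1}$ and integrate on $B_R$, using $T_k(u^p)\le u^p$ as in Lemma \ref{lemma71} after noting that the truncation can be introduced in the approximation step. Integration by parts yields
\begin{equation*}
\frac{1}{q}\frac{d}{dt}\|u(t)\|^q_{L^q_\rho(B_R)}\le -\frac{4m(q-1)}{(m+q-1)^2}\int_{B_R}\bigl|\nabla u^{(m+q-1)/2}\bigr|^2 dx + \int_{B_R}\rho\, u^{p+q-1}dx.
\end{equation*}
Crucially, the Dirichlet term is unweighted (from $\Delta u^m$), while the time-derivative and the reaction contribute the weighted $L^q_\rho$ and $L^{p+q-1}_\rho$ norms.

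Next I would split the Dirichlet integral using $c_1+c_2=1$: on the $c_1$ portion apply \eqref{P-pesi} to produce $c_1 C_p^2 \|u\|_{L^{m+q-1}_\rho}^{m+q-1}$; on the $c_2$ portion, raise to power $\alpha\in(0,1)$ and apply \eqref{P-pesi} to obtain a mixed term of the form $c_2 C_p^{2\alpha}\|u\|_{L^{m+q-1}_\rho}^{\alpha(m+q-1)}\|\nabla u^{(m+q-1)/2}\|_{L^2}^{2-2\alpha}$. For the source, interpolate $\|u\|_{L^{p+q-1}_\rho}$ between $L^{m+q-1}_\rho$ and $L^{p+m+q-1}_\rho$ with $\theta=m(m+q-1)/[p(p+q-1)]$, then split the second factor by weighted Hölder into an $L^{pN/2}_\rho$ piece and an $L^{(m+q-1)N/(N-2)}_\rho$ piece, and bound the latter via \eqref{S-pesi}. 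Choose $\alpha=m/p$ so that the two occurrences of $\|\nabla u^{(m+q-1)/2}\|_{L^2}$ carry the same exponent and can be absorbed against each other, producing a bracket
\begin{equation*}
\frac{4m(q-1)}{(m+q-1)^2}C - \|u(t)\|_{L^{pN/2}_\rho(B_R)}^{(p(p+q-1)-m(m+q-1))/(p+m+q-1)},
\end{equation*}
with $C=C_p^{2m/p}\tilde C$ and $\tilde C$ as in \eqref{tildec}.

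Finally I would close the argument exactly as in Lemma \ref{lemma71}. By continuity of $t\mapsto \|u(t)\|_{L^{pN/2}_\rho(B_R)}$ and the smallness hypothesis, the bracket stays nonnegative on some initial interval $(0,t_0]$; hence $\|u(t)\|_{L^q_\rho(B_R)}$ is nonincreasing there. Applying the conclusion with $q=pN/2$ shows that $\|u(t)\|_{L^{pN/2}_\rho(B_R)}$ itself stays below $\tilde\varepsilon_1$, which allows the argument to be iterated on a sequence of intervals $(t_n,t_{n+1}]$ covering $(0,+\infty)$. The main obstacle, as in Lemma \ref{lemma71}, lies in the algebraic choice of $\alpha$ and the bootstrap: one must verify that the critical exponent $pN/2$ appearing in the smallness assumption is exactly the one that the monotonicity statement can feed back into itself, which works here because the critical scaling is dictated by \eqref{S-pesi}, which is inherited from $\mathbb R^N$ up to the weighted left-hand side.
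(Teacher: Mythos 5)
Your proposal is correct and coincides with the paper's intended argument: the paper proves Lemma \ref{lemma71W} simply by declaring that it ``can be proved as that in Lemma \ref{lemma71}'', and you have supplied exactly the right adaptation, correctly identifying that the unweighted Dirichlet form produced by $\Delta u^m$ pairs with the weighted inequalities \eqref{S-pesi} and \eqref{P-pesi}, that all norms must be read in the weighted sense for the bootstrap at $q=p\frac N2$ to close, and that the choice $\alpha=m/p$ matches the gradient exponents since $p(p+q-1)-m(m+q-1)=(p-m)(p+m+q-1)$. No gaps.
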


\begin{proof}[Proof of Theorem \ref{teo71W}] The conclusion follows arguing step by step as in the proof of Theorem \ref{teo71}.  We use Lemma \ref{lemma71W} instead of Lemma \ref{lemma71} and Proposition \ref{prop62} instead of Proposition \ref{prop43}.
\end{proof}

\bigskip

\noindent \bf Acknowledgments. \rm The first and third author are partially supported by the PRIN project
201758MTR2 “Direct and inverse problems for partial differential equations: theoretical aspects
and applications” (Italy). All authors are members of the Gruppo Nazionale per l’Analisi Mate\-ma\-tica,
la Probabilità e le loro Applicazioni (GNAMPA) of the Istituto Nazionale di Alta
Mate\-ma\-tica (INdAM). The third author is partially supported by GNAMPA Projects 2019,
2020.

%
%
\bigskip
\bigskip
\bigskip

%


\end{document}